\newtheorem{proposition}{Proposition}[section]
\newtheorem{definition}[proposition]{Definition}
\newtheorem{lemma}[proposition]{Lemma}
\newtheorem{theorem}[proposition]{Theorem}
\newtheorem{example}[proposition]{Example}
\newtheorem{corollary}[proposition]{Corollary}
\numberwithin{equation}{section}
\title{Braided Cartan Calculi and Submanifold Algebras}
\author{Thomas Weber\footnote{thomas.weber@unina.it}}
\affil{Università di Napoli “FEDERICO II”
and
I.N.F.N. Sezione di Napoli,\newline
Complesso MSA,
Via Cintia,
80126 Napoli,
Italy}
\date{January 17, 2020}
\begin{document}

\maketitle

\abstract{
We construct a noncommutative Cartan calculus on any braided commutative algebra
and study its applications in noncommutative geometry. The braided Lie derivative,
insertion and de Rham differential are introduced and related via graded braided commutators,
also incorporating the braided Schouten-Nijenhuis bracket. The resulting
braided Cartan calculus generalizes the Cartan calculus on smooth manifolds and 
the twisted Cartan calculus.
While it is a necessity of derivation based Cartan calculi on 
noncommutative algebras to employ central bimodules our approach
allows to consider bimodules over the full underlying algebra.
Furthermore, equivariant covariant derivatives
and metrics on braided commutative algebras are discussed. In particular, we prove
the existence and uniqueness of an equivariant Levi-Civita covariant derivative for any 
fixed non-degenerate equivariant metric. Operating in a symmetric braided monoidal
category we argue that Drinfel'd twist deformation corresponds to gauge
equivalences of braided Cartan calculi. The notions of equivariant covariant derivative 
and metric are compatible with the Drinfel'd functor as well.
Moreover, we project braided Cartan calculi to submanifold algebras and prove that
this process commutes with twist deformation.}

\tableofcontents

\section{Introduction}

In \cite{Woronowicz1989} Stanisław Lech Woronowicz generalized the notion of
Cartan calculus to quantum groups. The crucial ingredient is given by
the de Rham differential, which is understood as a linear map 
$\mathrm{d}\colon H\rightarrow\Gamma$ from a Hopf algebra $H$ to
a bicovariant $H$-bimodule $\Gamma$, generated by $\mathcal{A}$ and $\mathrm{d}$,
such that the Leibniz rule 
$\mathrm{d}(ab)=(\mathrm{d}a)b+a\mathrm{d}b$ holds for all $a,b\in H$.
It is proven that such a first order calculus admits an extension to the exterior algebra.
Noncommutative calculi based on derivations rather than generalizations of
differential forms are discussed by Michel Dubois-Violette, Peter Michor and Peter Schupp
in \cite{D-VM96,D-VM94,Schupp1994,Schupp1993}, though differential forms are included as dual
objects to derivations. The latter approaches are suitable for general noncommutative
algebras in the setting of \textit{noncommutative geometry} \cite{Connes94}. However,
bimodules have to be considered over the center of the algebra. 
In these notes we are proposing an intermediate procedure, sticking to derivation
based calculi while incorporating a Hopf algebra symmetry to avoid central bimodules.
It is motivated by twisted Cartan calculi, a particular class of noncommutative Cartan
calculi in the overlap of \textit{deformation quantization}
\cite{Bayen1978,Waldmann2016} and \textit{quantum groups} \cite{ES2010,Ma95}.
Drinfel'd twists \cite{Dr83} are tools to deform Hopf algebras as
well as the representation theory of the Hopf algebra in a compatible way.
They experienced a lot of attention in the field of deformation
quantization since a Drinfel'd twist induces a star product if the corresponding
symmetry acts on a smooth manifold by derivations (c.f. \cite{Aschieri2008}).
Explicit examples of
star products are quite rare, so this connection was very desirable.
However, this should be taken with a grain of salt since there are several situations
\cite{Thomas2016,dAWe17} in which deformation quantization can not be obtained via 
a twisting procedure. More generally,
it was pointed out in \cite{Aschieri2006} that a Drinfel'd twist
leads to a noncommutative calculus, the so-called \textit{twisted Cartan calculus}.
The mentioned article even provides twisted covariant derivatives and metrics,
generalizing classical Riemannian geometry.
The additional braided symmetries appearing in this work were the main motivation
for the author to consider noncommutative Cartan calculi only depending on a triangular
structure rather than on the Drinfel'd twist itself. The appropriate categorical
framework for this generalization is provided in \cite{Schenkel2015,Schenkel2016}:
the category of equivariant braided symmetric bimodules with respect to a triangular
Hopf algebra and a braided commutative algebra is symmetric braided and monoidal
with respect to the tensor product over the algebra. Generalizing the algebraic
construction of the Cartan calculus to this category we obtain the 
\textit{braided Cartan calculus}. Vector fields are represented by the braided Lie 
algebra of braided derivations, multivector fields become a braided Gerstenhaber
algebra, while differential forms constitute a braided Graßmann algebra. 
On the categorical level a Drinfel'd twist corresponds to a functor and its action can be
understood as a gauge equivalence on the symmetric braided monoidal category 
(see \cite{AsSh14,Ka95}). We prove that this \textit{Drinfel'd functor} respects the
braided Cartan calculus in the sense that it intertwines the braided Lie derivative,
insertion, de Rham differential and Schouten-Nijenhuis bracket. Note that both,
the classical Cartan calculus and the twisted Cartan calculus, can be regarded as
braided Cartan calculi. The first one with respect
to any cocommutative Hopf algebra with trivial triangular structure and the latter
with respect to the twisted Hopf algebra, triangular structure and algebra.
In the same spirit we generalize covariant derivatives and metrics to the braided
symmetric setting. Note however that for simplicity we regard them to be equivariant
in addition, a requirement which excludes some interesting examples already in the
twisted case. However, this assumption assures compatibility with the Drinfel'd functor.
As yet another application of the braided Cartan calculus we study the braided Cartan
calculus on submanifold algebras and prove that they are projected from the ambient
algebra in accordance to gauge equivalences.
It would be interesting to generalize the braided Cartan calculus
to the setting of \cite{Schenkel2015}, to Lie-Rinehart algebras \cite{Hue1998}
and furthermore to connect the braided Cartan calculus
to Hochschild cohomology and the Cartan calculus introduced by Boris Tsygan (see e.g.
\cite{Tsygan2000,Tsygan2012}).

The paper is organized as follows: in Section~\ref{section2} we recall basic
properties of triangular Hopf algebras and study the symmetric braided monoidal category
of equivariant braided symmetric bimodules of a braided commutative algebra.
The Drinfel'd functor leads to a braided monoidal equivalence of this category
and the one corresponding to the twisted algebra and triangular Hopf algebra.
Our main result is developed in Section~\ref{section3}:
we generalize the construction of the Cartan calculus 
of a commutative algebra to braided commutative algebras by incorporating a
braided symmetry. Starting from the braided Lie algebra of braided derivations we
build the braided Gerstenhaber algebra of braided multivector fields. The braided
Schouten-Nijenhuis bracket is obtained by extending the braided commutator.
The dual braided exterior algebra constitutes the braided differential forms.
Then, the braided Lie derivative, insertion and de Rham differential are defined,
resulting in the braided Cartan relations.
In the special case of a commutative algebra we regain the commutation
relations of the classical Cartan calculus. Connecting to Section~\ref{section2}
we introduce a twist deformation of the braided Cartan calculus and prove that
it is isomorphic to the braided Cartan calculus on the twisted algebra corresponding to
the twisted triangular structure. This shows that our construction respects gauge
equivalence classes. As an application, we introduce equivariant covariant derivatives
and metrics, give several constructions like extending them to braided multivector fields
and differential forms and proving the existence and uniqueness of an equivariant
Levi-Civita covariant derivative for every non-degenerate equivariant metric. 
The Drinfel'd functor respects the constructions. Finally in Section~\ref{section4} we
study braided Cartan calculi on submanifold algebras. We show how to project
the algebraic structure and that this procedure commutes with twist deformation.
An explicit example, given by twist quantization of quadric surfaces of
$\mathbb{R}^3$, is elaborated in \cite{GaetanoThomas19}.

Throughout these notes every module is considered over a commutative ring $\Bbbk$.
The category ${}_\Bbbk\mathcal{M}$ of $\Bbbk$-modules is monoidal with respect
to the tensor product $\otimes$.
If not stated otherwise every algebra is assumed to be
unital and associative.
A map $\Phi\colon V^\bullet\rightarrow W^\bullet$ between graded modules
$V^\bullet=\bigoplus_{k\in\mathbb{Z}}V^k$ and 
$W^\bullet=\bigoplus_{k\in\mathbb{Z}}W^k$ is said to
be homogeneous of degree $k\in\mathbb{Z}$ if
$\Phi(V^\ell)\subseteq W^{k+\ell}$. We often write
$\Phi\colon V^\bullet\rightarrow W^{\bullet+k}$ in this case.
The graded commutator of two homogeneous maps
$\Phi,\Psi\colon V^\bullet\rightarrow V^\bullet$ of degree $k$ and $\ell$
is defined by 
$[\Phi,\Psi]=\Phi\circ\Psi-(-1)^{k\ell}\Psi\circ\Phi$.

\section{Preliminaries on Quantum Groups}\label{section2}

In this introductory section we recall the notion of triangular
Hopf algebra together with its braided monoidal category of representations.
Afterwards we show how to twist the algebraic structure by a $2$-cocycle
and in which sense this induces an equivalence on the categorical level.
In the last subsection we discuss equivariant algebra bimodules and their twist
deformation. The previous braided monoidal equivalence can be refined to the
bimodules which inherit a braided symmetry in addition if the algebra is
braided commutative.
For more details on (triangular) Hopf algebras we refer to the textbooks
\cite{ChPr94,Ka95,Ma95,Mo93}. The more experienced readers are recommended
to \cite{AsSh14,Schenkel2015,Schenkel2016,GiZh98} for a prompt discussion of
what is covered in this section.

\subsection{Triangular Hopf Algebras and their Representations}

In a shortcut we introduce the category of algebras over a commutative ring
$\Bbbk$ along with their representations. Dualizing the definition we obtain
coalgebras, combining to the notion of bialgebra if the algebra and coalgebra
structures respect each other. From the categorical perspective bialgebras are
those algebras whose category of representations is monoidal with respect
to the usual associativity and unit constraints. Integrating a
braiding in this category induces universal $\mathcal{R}$-matrices on
the bialgebra, while an additional antipode corresponds to a
rigid (braided) monoidal category and accordingly to a (triangular) Hopf
algebra on the algebraic level.

A \textit{$\Bbbk$-algebra} is a $\Bbbk$-module $\mathcal{A}$ endowed with
$\Bbbk$-linear maps $\mu\colon\mathcal{A}\otimes\mathcal{A}\rightarrow\mathcal{A}$
and $\eta\colon\Bbbk\rightarrow\mathcal{A}$, called \textit{product} and
\textit{unit} of $\mathcal{A}$, such that the identities
\begin{equation}
    \mu\circ(\mu\otimes\mathrm{id})
    =\mu\circ(\mathrm{id}\otimes\mu)
    \colon\mathcal{A}^{\otimes 3}\rightarrow\mathcal{A}
\end{equation}
and
\begin{equation}\label{eq02}
    \mu\circ(\eta\otimes\mathrm{id})
    =\mathrm{id}
    =\mu\circ(\mathrm{id}\otimes\eta)
    \colon\mathcal{A}\rightarrow\mathcal{A}
\end{equation}
hold, where we used the $\Bbbk$-module isomorphisms $\Bbbk\otimes\mathcal{A}\cong
\mathcal{A}\cong\mathcal{A}\otimes\Bbbk$ in eq.(\ref{eq02}). These
are the well-known associativity and unit properties. A $\Bbbk$-algebra $\mathcal{A}$
is said to be \textit{commutative} if $\mu_{21}=\mu$, where
$\mu_{21}\colon\mathcal{A}\otimes\mathcal{A}\ni(a\otimes b)\mapsto\mu(b\otimes a)
\in\mathcal{A}$. In the following we often drop the symbol $\mu$ and simply write
$a\cdot b$ or $ab$ for the product of two elements $a,b\in\mathcal{A}$.
The $\Bbbk$-algebras form a category ${}_\Bbbk\mathcal{A}$
with morphisms being \textit{$\Bbbk$-algebra homomorphisms}, i.e.
$\Bbbk$-linear maps $\phi\colon\mathcal{A}\rightarrow\mathcal{A}'$ between
$\Bbbk$-algebras $(\mathcal{A},\mu,\eta)$ and $(\mathcal{A}',\mu',\eta')$
such that
\begin{equation}
    \phi\circ\mu=\mu'\circ(\phi\otimes\phi)
    \colon\mathcal{A}\otimes\mathcal{A}\rightarrow\mathcal{A}'
    \text{ and }
    \phi\circ\eta=\eta'\colon\Bbbk\rightarrow\mathcal{A}'.
\end{equation}
Dualizing this concept
we define a \textit{$\Bbbk$-coalgebra} to be a $\Bbbk$-module $\mathcal{C}$
together with $\Bbbk$-linear maps $\Delta\colon\mathcal{C}
\rightarrow\mathcal{C}\otimes\mathcal{C}$ and
$\epsilon\colon\mathcal{C}\rightarrow\Bbbk$ satisfying
\begin{equation}
    (\Delta\otimes\mathrm{id})\circ\Delta
    =(\mathrm{id}\otimes\Delta)\circ\Delta
    \colon\mathcal{C}\rightarrow\mathcal{C}^{\otimes 3}
\end{equation}
and
\begin{equation}
    (\epsilon\otimes\mathrm{id})\circ\Delta
    =\mathrm{id}
    =(\mathrm{id}\otimes\epsilon)\circ\Delta
    \colon\mathcal{C}\rightarrow\mathcal{C}.
\end{equation}
The maps $\Delta$ and $\epsilon$ are said to be the \textit{coproduct}
and \textit{counit}
of $\mathcal{C}$ with the properties of being coassociative and
counital, respectively. We frequently use Sweedler's sigma notation
$c_{(1)}\otimes c_{(2)}$ to denote the coproduct $\Delta(c)$ of an element
$c\in\mathcal{C}$, omitting a possibly finite sum of factorizing elements.
By the coassociativity of $\Delta$ we further define
\begin{equation}
    c_{(1)}\otimes c_{(2)}\otimes c_{(3)}
    :=c_{(1)(1)}\otimes c_{(1)(2)}\otimes c_{(2)}
    =c_{(1)}\otimes c_{(2)(1)}\otimes c_{(2)(2)}
\end{equation}
and similarly for higher coproducts. A $\Bbbk$-coalgebra $\mathcal{C}$ is said to be
\textit{cocommutative} if $\Delta_{21}=\Delta$, where $\Delta_{21}(c)
=c_{(2)}\otimes c_{(1)}$ for all $c\in\mathcal{C}$. A
\textit{$\Bbbk$-coalgebra homomorphism} is a $\Bbbk$-linear map
$\psi\colon\mathcal{C}\rightarrow\mathcal{C}'$ between
$\Bbbk$-coalgebras $(\mathcal{C},\Delta,\epsilon)$ and
$(\mathcal{C}',\Delta',\epsilon')$ obeying the relations
\begin{equation}
    \Delta'\circ\psi=(\psi\otimes\psi)\circ\Delta
    \colon\mathcal{C}\rightarrow\mathcal{C}'\otimes\mathcal{C}'
    \text{ and }
    \epsilon'\circ\psi=\epsilon\colon\mathcal{C}\rightarrow\Bbbk.
\end{equation}
The category of $\Bbbk$-comodules is denoted by ${}_\Bbbk\mathcal{C}$.
\begin{example}
We give some elementary examples and constructions of (co)algebras, focusing
on the ones we need in the rest of these notes.
\begin{enumerate}
\item[i.)] The tensor product $\mathcal{A}\otimes\mathcal{A}'$ of two
$\Bbbk$-algebras $(\mathcal{A},\mu,\eta)$ and $(\mathcal{A}',\mu',\eta')$
becomes a $\Bbbk$-algebra with product
$$
\mu_{\mathcal{A}\otimes\mathcal{A}'}
=(\mu\otimes\mu')\circ(\mathrm{id}\otimes\tau_{\mathcal{A}',\mathcal{A}}
\otimes\mathrm{id})
\colon(\mathcal{A}\otimes\mathcal{A}')\otimes(\mathcal{A}\otimes\mathcal{A}')
\rightarrow\mathcal{A}\otimes\mathcal{A}'
$$
and unit $\eta_{\mathcal{A}\otimes\mathcal{A}'}=\eta\otimes\eta'$,
where we use the $\Bbbk$-module isomorphism $\Bbbk\otimes\Bbbk\cong\Bbbk$ in
the latter definition and $\tau_{\mathcal{A}',\mathcal{A}}
\colon\mathcal{A}'\otimes\mathcal{A}\rightarrow\mathcal{A}\otimes\mathcal{A}'$
denotes the tensor flip isomorphism.
Dually, the tensor product $\mathcal{C}\otimes\mathcal{C}'$ of two
$\Bbbk$-coalgebras $(\mathcal{C},\Delta,\epsilon)$ and 
$(\mathcal{C}',\Delta',\epsilon')$ can be structured as a $\Bbbk$-coalgebra
with coproduct
$$
\Delta_{\mathcal{C}\otimes\mathcal{C}'}
=(\mathrm{id}\otimes\tau_{\mathcal{C},\mathcal{C}'}\otimes\mathrm{id})
\circ(\Delta\otimes\Delta')
\colon\mathcal{C}\otimes\mathcal{C}'
\rightarrow(\mathcal{C}\otimes\mathcal{C}')\otimes(\mathcal{C}\otimes\mathcal{C}')
$$
and counit $\epsilon_{\mathcal{C}\otimes\mathcal{C}'}
=\epsilon\otimes\epsilon'$.

\item[ii.)] Any commutative ring $\Bbbk$ is a $\Bbbk$-(co)algebra with
product and unit given by its ring multiplication and unit element,
while the coproduct and counit are defined by 
$\Delta(\lambda)=\lambda(1\otimes 1)$ and $\epsilon(\lambda)=\lambda$
for all $\lambda\in\Bbbk$.
\end{enumerate}
\end{example}
A $\Bbbk$-algebra $(\mathcal{A},\mu,\eta)$ which is also a $\Bbbk$-coalgebra
with coproduct $\Delta$ and counit $\epsilon$ is said to be a
\textit{$\Bbbk$-bialgebra} if $\Delta$ and $\epsilon$ are
$\Bbbk$-algebra homomorphisms and $\mu$ and $\eta$ are $\Bbbk$-coalgebra
homomorphisms. It is clear by the symmetry in the definition of algebra
and coalgebra that a $\Bbbk$-algebra and $\Bbbk$-coalgebra
is a $\Bbbk$-bialgebra if and only if its algebra structures are
$\Bbbk$-coalgebra homomorphisms if and only if its coalgebra structures are
$\Bbbk$-algebra homomorphisms.
A \textit{$\Bbbk$-bialgebra homomorphism} is a $\Bbbk$-algebra
homomorphism between $\Bbbk$-bialgebras which is also a $\Bbbk$-coalgebra
homomorphism.
\begin{definition}
A $\Bbbk$-bialgebra $(H,\mu,\eta,\Delta,\epsilon)$ is said to be triangular
if there is an invertible element $\mathcal{R}\in H\otimes H$, called universal
$\mathcal{R}$-matrix or triangular structure, with inverse given by
$\mathcal{R}_{21}=\tau_{H,H}(\mathcal{R})$, such that
\begin{equation}\label{eq03}
    \Delta_{21}(\xi)
    =\mathcal{R}\Delta(\xi)\mathcal{R}^{-1}
    \text{ for all }\xi\in H,
\end{equation}
and the hexagon relations
\begin{equation}
    (\Delta\otimes\mathrm{id})(\mathcal{R})
    =\mathcal{R}_{13}\mathcal{R}_{23}
    \text{ and }
    (\mathrm{id}\otimes\Delta)(\mathcal{R})
    =\mathcal{R}_{13}\mathcal{R}_{12}    
\end{equation}
are satisfied, where $\mathcal{R}_{12}=\mathcal{R}\otimes 1,
~\mathcal{R}_{23}=1\otimes\mathcal{R},
~\mathcal{R}_{13}=(\mathrm{id}\otimes\tau_{H,H})(\mathcal{R}_{12})\in H^{\otimes 3}$.
Property (\ref{eq03}) states that $H$ is quasi-cocommutative.
The $\Bbbk$-bialgebra $H$ is said to be a $\Bbbk$-Hopf algebra if there is a bijective 
$\Bbbk$-linear map $S\colon H\rightarrow H$, called antipode, such that
\begin{equation}
    \mu\circ(S\otimes\mathrm{id})\circ\Delta
    =\eta\circ\epsilon
    =\mu\circ(\mathrm{id}\otimes S)\circ\Delta
    \colon H\rightarrow H
\end{equation}
holds. A $\Bbbk$-bialgebra homomorphism between $\Bbbk$-Hopf algebras is
said to be a $\Bbbk$-Hopf algebra homomorphism if it intertwines the antipodes.
We denote the category of $\Bbbk$-Hopf algebras by
${}_\Bbbk\mathcal{H}$. A $\Bbbk$-Hopf algebra 
$(H,\mu,\eta,\Delta,\epsilon,S)$ is called triangular if its underlying
bialgebra structure is.
\end{definition}
In the following we often drop the reference to the commutative ring $\Bbbk$
and simply refer to Hopf algebras etc.
Remark that there are slightly weaker definitions of Hopf algebra, not
assuming the antipode to have an inverse (see \cite{Ka95,Ma95,Mo93}).
We follow the convention of \cite{ChPr94}, arguing that in all
examples which are relevant for us the antipode is invertible
and we do not want to state this as an additional condition throughout.
One can show that the antipode $S$ of a bialgebra
$(H,\mu,\eta,\Delta,\epsilon)$ is unique if it exists and that
it is an anti-bialgebra homomorphism in the sense that
\begin{equation}
    S(\xi\chi)=S(\chi)S(\xi),~
    S(1)=1,~
    S(\xi)_{(1)}\otimes S(\xi)_{(2)}=S(\xi_{(2)})\otimes S(\xi_{(1)})
    \text{ and }
    \epsilon\circ S=\epsilon
\end{equation}
for all $\xi,\chi\in H$. If $H$ is commutative or cocommutative it 
follows that $S^2=\mathrm{id}$. Moreover, any cocommutative
Hopf algebra is triangular with universal $\mathcal{R}$-matrix given by
$\mathcal{R}=1\otimes 1$. Any universal $\mathcal{R}$-matrix $\mathcal{R}$
satisfies the \textit{quantum Yang-Baxter equation}
$$
\mathcal{R}_{12}\mathcal{R}_{13}\mathcal{R}_{23}
=\mathcal{R}_{23}\mathcal{R}_{13}\mathcal{R}_{12}.
$$
Fix a triangular $\Bbbk$-bialgebra $(H,\mu,\eta,\Delta,\epsilon,\mathcal{R})$
for the moment. We motivate its definition by elaborating that the
representation theory of $H$ has interesting categorical properties.
Recall that a \textit{representation} of $H$ is nothing but a
\textit{left $H$-module}, i.e. a $\Bbbk$-module $\mathcal{M}$ together
with a $\Bbbk$-linear map $\lambda\colon H\otimes\mathcal{M}
\rightarrow\mathcal{M}$, called left $H$-module action or left $H$-module structure,
such that
\begin{equation}
    \lambda\circ(\mathrm{id}_H\otimes\lambda)
    =\lambda\circ(\mu\otimes\mathrm{id}_\mathcal{M})
    \colon H\otimes H\otimes\mathcal{M}\rightarrow\mathcal{M}
\end{equation}
and $\lambda\circ(\eta\otimes\mathrm{id}_\mathcal{M})
=\mathrm{id}_\mathcal{M}$ hold. A \textit{left $H$-module homomorphism}
is a $\Bbbk$-linear map $\Phi\colon\mathcal{M}\rightarrow\mathcal{M}'$
between left $H$-modules $(\mathcal{M},\lambda)$ and $(\mathcal{M}',\lambda')$
such that
\begin{equation}
    \Phi\circ\lambda
    =\lambda'\circ(\mathrm{id}_H\otimes\Phi)
    \colon H\otimes\mathcal{M}\rightarrow\mathcal{M}'.
\end{equation}
We sometimes refer to left $H$-module homomorphisms as
\textit{$H$-equivariant maps}.
This forms the category ${}_H\mathcal{M}$ of left $H$-modules.
In the following we often write $\xi\cdot m$ instead of $\lambda(\xi\otimes m)$
for a left $H$-module $(\mathcal{M},\lambda)$, where $\xi\in H$ and 
$m\in\mathcal{M}$. Note that until now we only used the algebra structure
of $H$ in the definition of ${}_H\mathcal{M}$. In other words, we can
consider the category of representations for any algebra.
However, since $\Delta$ and $\epsilon$ are
algebra homomorphisms we can define a left $H$-module action on
the tensor product of two left $H$-modules $(\mathcal{M},\lambda)$
and $(\mathcal{M}',\lambda')$ by
$$
\lambda_{\mathcal{M}\otimes\mathcal{M}'}
=(\lambda\otimes\lambda')\circ
(\mathrm{id}_H\otimes\tau_{H,\mathcal{M}}\otimes\mathrm{id}_{\mathcal{M}'})
\circ(\Delta\otimes\mathrm{id}_{\mathcal{M}\otimes\mathcal{M}'})
\colon H\otimes(\mathcal{M}\otimes\mathcal{M}')
\rightarrow\mathcal{M}\otimes\mathcal{M}'
$$
and a left $H$-module action on $\Bbbk$ by
$$
\lambda_\Bbbk=(\epsilon\otimes\mathrm{id}_\Bbbk)
\colon H\otimes\Bbbk\rightarrow\Bbbk\otimes\Bbbk\cong\Bbbk.
$$
Those actions respect the usual associativity and unit constraints of the
tensor product of $\Bbbk$-modules because $\Delta$ is coassociative and
$\epsilon$ satisfies the counit axiom. In other words,
$({}_H\mathcal{M},\otimes)$ is a monoidal category. The universal 
$\mathcal{R}$-matrix $\mathcal{R}$ induces a symmetric braiding on this category
by defining
\begin{equation}
    c^\mathcal{R}_{\mathcal{M},\mathcal{M}'}(m\otimes m')
    =\mathcal{R}^{-1}\cdot(m'\otimes m)
    \in\mathcal{M}'\otimes\mathcal{M}
    \text{ for all }m\in\mathcal{M},~m'\in\mathcal{M}'.
\end{equation}
In fact, the hexagon relations of $\mathcal{R}$ correspond to the hexagon
relations of $c^\mathcal{R}$ and $c^\mathcal{R}_{\mathcal{M},\mathcal{M}'}\circ
c^\mathcal{R}_{\mathcal{M}',\mathcal{M}}
=\mathrm{id}_{\mathcal{M}'\otimes\mathcal{M}}$
since $\mathcal{R}_{21}$ is the inverse of $\mathcal{R}$.
Conversely, any symmetric braiding $c$ on $({}_H\mathcal{M},\otimes)$ determines a
triangular structure $\mathcal{R}=\tau_{H,H}(c_{H,H}(1\otimes 1))\in H\otimes H$, where
$H$ acts on itself by left multiplication.
\begin{proposition}[\cite{Ka95}~Proposition~XIII.1.4.]
The representation theory ${}_H\mathcal{M}$ of a $\Bbbk$-bialgebra
is a monoidal category. It is braided symmetric if and only if
$H$ is triangular.
\end{proposition}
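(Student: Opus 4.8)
The plan is to verify directly that the data constructed above — the tensor-product $H$-action, the unit action on $\Bbbk$, the standard associativity and unit constraints inherited from $({}_\Bbbk\mathcal{M},\otimes)$, and the family $c^\mathcal{R}$ — satisfy all the axioms of a (symmetric) braided monoidal category, and conversely that any symmetric braiding determines a triangular structure. Since the excerpt has already done most of the bookkeeping, what remains is to turn those observations into an iff.

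First I would establish the monoidal structure. One checks that $\lambda_{\mathcal{M}\otimes\mathcal{M}'}$ is a well-defined left $H$-module action: associativity of the action follows from $\Delta$ being an algebra homomorphism (so that iterated coproducts are compatible with $\mu$) together with coassociativity, and unitality follows from the counit axiom applied to $\eta$. That $\lambda_\Bbbk=\epsilon\otimes\mathrm{id}$ is an action uses that $\epsilon$ is an algebra homomorphism. The associativity constraint $\alpha_{\mathcal{M},\mathcal{M}',\mathcal{M}''}$ and the left/right unitors are the underlying $\Bbbk$-module isomorphisms; one verifies they are $H$-equivariant using coassociativity of $\Delta$ and the counit axiom respectively, and the pentagon and triangle identities are then inherited from ${}_\Bbbk\mathcal{M}$. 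This gives the first sentence and does not require the $\mathcal{R}$-matrix at all.

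Next, for the braided case, I would show $c^\mathcal{R}_{\mathcal{M},\mathcal{M}'}$ is $H$-equivariant and natural, and that the two hexagon axioms for $c^\mathcal{R}$ are equivalent to the hexagon relations $(\Delta\otimes\mathrm{id})(\mathcal{R})=\mathcal{R}_{13}\mathcal{R}_{23}$ and $(\mathrm{id}\otimes\Delta)(\mathcal{R})=\mathcal{R}_{13}\mathcal{R}_{12}$, while equivariance of $c^\mathcal{R}$ is equivalent to the quasi-cocommutativity $\Delta_{21}(\xi)=\mathcal{R}\Delta(\xi)\mathcal{R}^{-1}$; symmetry $c^\mathcal{R}_{\mathcal{M},\mathcal{M}'}\circ c^\mathcal{R}_{\mathcal{M}',\mathcal{M}}=\mathrm{id}$ is exactly $\mathcal{R}^{-1}=\mathcal{R}_{21}$. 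These computations are by now standard and the excerpt already sketches them; I would phrase them as "a direct computation shows..." For the converse, given a symmetric braiding $c$ on $({}_H\mathcal{M},\otimes)$, I set $\mathcal{R}=\tau_{H,H}(c_{H,H}(1\otimes 1))$ with $H$ acting on itself by left multiplication, then recover equivariance of $c$ on the regular representation as quasi-cocommutativity, naturality of $c$ (applied to module maps out of $H$) to pin down $c$ on arbitrary modules in terms of $\mathcal{R}$, the hexagons for $c$ as the hexagons for $\mathcal{R}$, and symmetry of $c$ as $\mathcal{R}^{-1}=\mathcal{R}_{21}$; invertibility of $\mathcal{R}$ follows since $c$ is an isomorphism.

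The main obstacle is the converse direction — specifically, the naturality argument showing that a symmetric braiding on the whole category is completely determined by its value on the free rank-one module $H$. The point is that for any $m\in\mathcal{M}$ the map $H\to\mathcal{M}$, $\xi\mapsto\xi\cdot m$, is $H$-equivariant, so naturality of $c$ in both arguments forces $c_{\mathcal{M},\mathcal{M}'}(m\otimes m')=(\text{action of }\mathcal{R}^{-1})\cdot(m'\otimes m)$, i.e. $c=c^{\mathcal{R}}$; then all the remaining axioms transfer as above. Since the paper cites \cite{Ka95} Proposition~XIII.1.4 for the statement, I would keep the exposition brief, emphasizing which Hopf-algebraic identity corresponds to which categorical axiom rather than reproducing every diagram chase.
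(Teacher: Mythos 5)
Your proposal is correct and follows the same route the paper sketches (the paper itself only cites Kassel, Proposition~XIII.1.4, but the surrounding text already records the dictionary you use: hexagon relations of $\mathcal{R}$ versus hexagon axioms of $c^{\mathcal{R}}$, quasi-cocommutativity versus $H$-equivariance of the braiding, $\mathcal{R}^{-1}=\mathcal{R}_{21}$ versus symmetry, and the formula $\mathcal{R}=\tau_{H,H}(c_{H,H}(1\otimes 1))$ for the converse). In particular your naturality argument via the equivariant maps $H\ni\xi\mapsto\xi\cdot m\in\mathcal{M}$, which shows a braiding is determined by its value on the regular representation, is exactly the standard step that makes the converse work.
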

In the case of a Hopf algebra
$(H,\mu,\eta,\Delta,\epsilon,S)$ we receive an additional
rigidity property of its monoidal category in the sense that every
left $H$-module admits a left and right dual module. However, for
this we have to restrict our consideration to finitely generated
projective $\Bbbk$-modules ${}_\Bbbk\mathcal{M}^f$.
The antipode of $H$ can be used
to transfer the rigidity property from ${}_\Bbbk\mathcal{M}^f$
to ${}_H\mathcal{M}^f$. Denote the usual dual pairing of a
finitely generated projective $\Bbbk$-module $\mathcal{M}$ and its
dual module $\mathcal{M}^*$ by $\langle\cdot,\cdot\rangle\colon
\mathcal{M}^*\otimes\mathcal{M}\rightarrow\Bbbk$.
\begin{proposition}[\cite{ChPr94}~Example~5.1.4]
Let $H$ be a $\Bbbk$-Hopf algebra and consider the monoidal category
${}_H\mathcal{M}$ of left $H$-modules.
The monoidal subcategory ${}_H\mathcal{M}^f$ of
finitely generated projective left $H$-modules is rigid, where the
left and right dual $\mathcal{M}^*$ and ${}^*\mathcal{M}$ of an object
$\mathcal{M}$ in ${}_H\mathcal{M}^f$ are defined as the finitely generated
projective $\Bbbk$-module $\mathcal{M}^*$ with left $H$-module action
given by
$$
\langle\xi\cdot\alpha,m\rangle
=\langle\alpha,S(\xi)\cdot m\rangle
$$
and
$$
\langle\xi\cdot\alpha,m\rangle
=\langle\alpha,S^{-1}(\xi)\cdot m\rangle
$$
for all $\xi\in H$, $m\in\mathcal{M}$ and $\alpha\in\mathcal{M}^*$,
respectively. The forgetful functor
\begin{equation}
    F\colon{}_H\mathcal{M}^f\rightarrow {}_\Bbbk\mathcal{M}^f
\end{equation}
is monoidal.
\end{proposition}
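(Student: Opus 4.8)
The plan is to reduce everything to the rigidity of ${}_\Bbbk\mathcal{M}^f$, using only the (anti)multiplicativity of $S$ and the antipode axioms. Recall that a finitely generated projective $\Bbbk$-module $\mathcal{M}$ admits a finite dual basis $e_i\in\mathcal{M}$, $e^i\in\mathcal{M}^*$ (finite sum understood) with $m=\sum_i\langle e^i,m\rangle e_i$ and $\alpha=\sum_i\langle\alpha,e_i\rangle e^i$; equivalently ${}_\Bbbk\mathcal{M}^f$ is rigid with evaluation $\mathrm{ev}_\mathcal{M}(\alpha\otimes m)=\langle\alpha,m\rangle$ and coevaluation $\mathrm{coev}_\mathcal{M}(1)=\sum_i e_i\otimes e^i$ satisfying the zig-zag identities. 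I would also first note that the monoidal structure on ${}_H\mathcal{M}$ is defined precisely so that the underlying $\Bbbk$-module tensor product, associator and unitors are those of ${}_\Bbbk\mathcal{M}$; hence $F$ is strict monoidal — in particular this already proves the last assertion — and faithful. The strategy is then: (1) show $\mathcal{M}^*$ with the stated action lies in ${}_H\mathcal{M}^f$; (2) show $\mathrm{ev}_\mathcal{M}$ and $\mathrm{coev}_\mathcal{M}$ are $H$-equivariant; (3) transfer the zig-zag identities along the faithful monoidal $F$; (4) treat the right dual verbatim with $S$ replaced by $S^{-1}$.

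For (1), the $\Bbbk$-linear dual of a finitely generated projective module is again finitely generated projective, so only the module axioms for $\langle\xi\cdot\alpha,m\rangle=\langle\alpha,S(\xi)\cdot m\rangle$ need checking, and these are immediate from $S(\xi\chi)=S(\chi)S(\xi)$ and $S(1)=1$, since $\langle(\xi\chi)\cdot\alpha,m\rangle=\langle\alpha,S(\chi)S(\xi)\cdot m\rangle=\langle\xi\cdot(\chi\cdot\alpha),m\rangle$. For the equivariance of $\mathrm{ev}_\mathcal{M}$, with $\Bbbk$ carrying the $\epsilon$-action and $\mathcal{M}^*\otimes\mathcal{M}$ the tensor product action, one computes
\[
\mathrm{ev}_\mathcal{M}\big(\xi\cdot(\alpha\otimes m)\big)
=\langle\xi_{(1)}\cdot\alpha,\xi_{(2)}\cdot m\rangle
=\langle\alpha,S(\xi_{(1)})\xi_{(2)}\cdot m\rangle
=\epsilon(\xi)\langle\alpha,m\rangle,
\]
using $\mu\circ(S\otimes\mathrm{id})\circ\Delta=\eta\circ\epsilon$.

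Step (2) for the coevaluation is the main point. Using that $\mathcal{M}$ is finitely generated projective, I would identify $\mathcal{M}\otimes\mathcal{M}^*\cong\mathrm{End}_\Bbbk(\mathcal{M})$ via $m\otimes\alpha\mapsto(n\mapsto\langle\alpha,n\rangle m)$; under this isomorphism $\mathrm{coev}_\mathcal{M}(1)=\sum_i e_i\otimes e^i$ corresponds to $\mathrm{id}_\mathcal{M}$ by the dual basis property, and the tensor product $H$-action on $\mathcal{M}\otimes\mathcal{M}^*$ is transported to $(\xi\cdot f)(n)=\xi_{(1)}\cdot f\big(S(\xi_{(2)})\cdot n\big)$. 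Hence
\[
(\xi\cdot\mathrm{id}_\mathcal{M})(n)=\xi_{(1)}\cdot S(\xi_{(2)})\cdot n=\epsilon(\xi)\,n
\]
by the other antipode axiom $\mu\circ(\mathrm{id}\otimes S)\circ\Delta=\eta\circ\epsilon$, so $\mathrm{coev}_\mathcal{M}$ is $H$-equivariant. I expect this identification-and-transport to be the only real obstacle; everything else is bookkeeping. It is also exactly here that finite generation and projectivity are used — they guarantee the existence of $\mathrm{coev}_\mathcal{M}$ (a finite dual basis), which is why one cannot stay in the full category ${}_H\mathcal{M}$.

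For (3), each zig-zag identity, e.g. $(\mathrm{ev}_\mathcal{M}\otimes\mathrm{id}_{\mathcal{M}^*})\circ(\mathrm{id}_{\mathcal{M}^*}\otimes\mathrm{coev}_\mathcal{M})=\mathrm{id}_{\mathcal{M}^*}$ and its partner, is an equality of morphisms in ${}_H\mathcal{M}^f$ whose image under the strict monoidal functor $F$ is the corresponding equality in ${}_\Bbbk\mathcal{M}^f$; the latter holds and $F$ is faithful, so the former holds, and $\mathcal{M}^*$ is a left dual of $\mathcal{M}$. Finally, for the right dual ${}^*\mathcal{M}$ — where bijectivity of $S$ enters — I would repeat the argument with $S$ replaced by $S^{-1}$ and the roles of evaluation and coevaluation interchanged: $\widetilde{\mathrm{ev}}_\mathcal{M}(m\otimes\alpha)=\langle\alpha,m\rangle$ is equivariant because applying the anti-algebra homomorphism $S^{-1}$ to $S(\xi_{(1)})\xi_{(2)}=\epsilon(\xi)1$ yields $S^{-1}(\xi_{(2)})\xi_{(1)}=\epsilon(\xi)1$, and $\widetilde{\mathrm{coev}}_\mathcal{M}(1)=\sum_i e^i\otimes e_i$ corresponds to $\mathrm{id}_\mathcal{M}\in{}^*\mathcal{M}\otimes\mathcal{M}\cong\mathrm{End}_\Bbbk(\mathcal{M})$ and is equivariant via the transported-endomorphism argument together with the image under $S^{-1}$ of $\xi_{(1)}S(\xi_{(2)})=\epsilon(\xi)1$, namely $\xi_{(2)}S^{-1}(\xi_{(1)})=\epsilon(\xi)1$. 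The zig-zag identities for ${}^*\mathcal{M}$ transfer as before. This establishes rigidity of ${}_H\mathcal{M}^f$, and monoidality of $F$ was already observed.
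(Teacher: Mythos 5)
Your proof is correct and complete; the paper itself gives no proof of this proposition but merely quotes it from \cite{ChPr94}, Example~5.1.4. Your argument is the standard one: the antipode axioms give equivariance of evaluation and (via the identification of $\mathcal{M}\otimes\mathcal{M}^*$ with $\mathrm{End}_\Bbbk(\mathcal{M})$, which is where finite generation and projectivity enter) of coevaluation, and the zig-zag identities transfer along the faithful strict monoidal forgetful functor, with $S^{-1}$ handling the right dual.
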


\subsection{Drinfel'd Twist Deformation}

In this subsection we introduce Drinfel'd twists as an instrument to deform
(triangular) Hopf algebra structures. It turns out that the
representation theory of the deformed (triangular) Hopf algebra
is (braided) monoidally equivalent the representation theory of the undeformed
(triangular) Hopf algebra. The definition of
Drinfel'd twist originates from \cite{Dr83}, while the monoidal
equivalence was proven in \cite{Dr89}. We further refer to 
\cite{AsSh14,GiZh98} for a discussion of this topic.
Fix a Hopf algebra $(H,\mu,\eta,\Delta,\epsilon,S)$ in the following.
\begin{definition}
A (Drinfel'd) twist on $H$ is an invertible element
$\mathcal{F}\in H\otimes H$ satisfying the $2$-cocycle
condition
\begin{equation}\label{eq19}
    (\mathcal{F}\otimes 1)(\Delta\otimes\mathrm{id})(\mathcal{F})
    =(1\otimes\mathcal{F})(\mathrm{id}\otimes\Delta)(\mathcal{F})
\end{equation}
and the normalization condition
$
(\epsilon\otimes\mathrm{id})(\mathcal{F})
=1
=(\mathrm{id}\otimes\epsilon)(\mathcal{F}).
$
\end{definition}
There are several examples and constructions of Drinfel'd twists,
showing that this is a rich concept. We refer the interested reader to
\cite{Jonas2017,Pachol2017}.
It follows that the inverse $\mathcal{F}^{-1}$ of a twist $\mathcal{F}$ on
$H$ is \textit{normalized}, i.e. 
$(\epsilon\otimes\mathrm{id})(\mathcal{F}^{-1})
=1
=(\mathrm{id}\otimes\epsilon)(\mathcal{F}^{-1})$
and satisfies the so called \textit{inverse $2$-cocycle condition}
\begin{equation}
    (\Delta\otimes\mathrm{id})(\mathcal{F}^{-1})(\mathcal{F}^{-1}\otimes 1)
    =(\mathrm{id}\otimes\Delta)(\mathcal{F}^{-1})(1\otimes\mathcal{F}^{-1}).
\end{equation}
Any element $\mathcal{F}\in H\otimes H$ can be written as a finite sum of
factorizing elements $\mathcal{F}_1^i\otimes\mathcal{F}_2^i$,
$\mathcal{F}_1^i,\mathcal{F}_2^i\in H$.
In the following we usually omit this finite sum and simply write
$\mathcal{F}=\mathcal{F}_1\otimes\mathcal{F}$, which is called
\textit{leg notation}. Using this convention, the $2$-cocycle
(\ref{eq19}) condition reads
\begin{equation}
    \mathcal{F}_1\mathcal{F}'_{1(1)}\otimes\mathcal{F}_2\mathcal{F}'_{1(2)}
    \otimes\mathcal{F}'_2
    =\mathcal{F}'_1\otimes\mathcal{F}_1\mathcal{F}'_{2(1)}\otimes
    \mathcal{F}_2\mathcal{F}'_{2(2)},
\end{equation}
where we marked the second copy of $\mathcal{F}$ by $\mathcal{F}
=\mathcal{F}'_1\otimes\mathcal{F}'_2$ to distinguish the summations.
The following proposition (c.f. \cite{Ma95}~Theorem~2.3.4)
reveals the utility of Drinfel'd twists as they provide a construction of
(triangular) Hopf algebras from given ones.
\begin{proposition}
Consider a twist $\mathcal{F}$ on $H$. Then
$H_\mathcal{F}=(H,\mu,\eta,\Delta_\mathcal{F},\epsilon,S_\mathcal{F})$
is a Hopf algebra with coproduct and antipode given by
\begin{equation}
    \Delta_\mathcal{F}(\xi)=\mathcal{F}\Delta(\xi)\mathcal{F}^{-1}
    \text{ and }
    S_\mathcal{F}(\xi)=\beta S(\xi)\beta^{-1},
\end{equation}
respectively, for all $\xi\in H$, where
$\beta=\mathcal{F}_1S(\mathcal{F}_2)\in H$. If $H$ is triangular
with universal $\mathcal{R}$-matrix $\mathcal{R}$, so is $H_\mathcal{F}$
with universal $\mathcal{R}$-matrix 
$\mathcal{R}_\mathcal{F}=\mathcal{F}_{21}\mathcal{R}\mathcal{F}^{-1}$.
\end{proposition}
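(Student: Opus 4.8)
The plan is to verify directly that the twisted data $(\mu,\eta,\Delta_\mathcal{F},\epsilon,S_\mathcal{F})$ satisfy all the Hopf algebra axioms, and then that $\mathcal{R}_\mathcal{F}$ is a triangular structure. Since the product $\mu$, the unit $\eta$ and the counit $\epsilon$ are unchanged, associativity and unitality of $\mu$ hold trivially, and coassociativity of $\Delta_\mathcal{F}$ is the one substantive point: I would compute $(\Delta_\mathcal{F}\otimes\mathrm{id})\Delta_\mathcal{F}(\xi)$ and $(\mathrm{id}\otimes\Delta_\mathcal{F})\Delta_\mathcal{F}(\xi)$, each of which produces a conjugation of $(\Delta\otimes\mathrm{id})\Delta(\xi)=(\mathrm{id}\otimes\Delta)\Delta(\xi)$ by an element of $H^{\otimes 3}$; the $2$-cocycle condition \eqref{eq19} is precisely what makes the two conjugating elements coincide (up to the inverse cocycle condition on the other side), so the two expressions agree. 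The counit axiom for $\Delta_\mathcal{F}$ follows from the normalization $(\epsilon\otimes\mathrm{id})(\mathcal{F})=1=(\mathrm{id}\otimes\epsilon)(\mathcal{F})$ together with $\epsilon$ being an algebra map. Compatibility of $\Delta_\mathcal{F}$ with $\mu$ (i.e. $\Delta_\mathcal{F}$ is an algebra homomorphism) is immediate since conjugation by the fixed invertible element $\mathcal{F}$ is an algebra automorphism of $H\otimes H$ and $\Delta$ is already multiplicative.

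Next I would check the antipode axiom for $S_\mathcal{F}(\xi)=\beta S(\xi)\beta^{-1}$ with $\beta=\mathcal{F}_1 S(\mathcal{F}_2)$. The key auxiliary computation is that $\beta$ is invertible with $\beta^{-1}=S(\mathcal{F}^{-1}_1)\mathcal{F}^{-1}_2$, which one sees from the antipode property and the normalization of $\mathcal{F},\mathcal{F}^{-1}$; here one uses that $S$ is bijective, as assumed in our convention. Then $\mu\circ(S_\mathcal{F}\otimes\mathrm{id})\circ\Delta_\mathcal{F}=\eta\circ\epsilon$ is verified by writing out $\Delta_\mathcal{F}(\xi)=\mathcal{F}_1\xi_{(1)}\mathcal{F}^{-1}_1\otimes\mathcal{F}_2\xi_{(2)}\mathcal{F}^{-1}_2$, applying $S_\mathcal{F}$ to the first leg and multiplying; the $S$-anti-homomorphism property turns the first factor into $S(\mathcal{F}^{-1}_1)S(\xi_{(1)})S(\mathcal{F}_1)\beta$, and after using the $2$-cocycle condition to rearrange the $\mathcal{F}$-legs one collapses $\mathcal{F}_1S(\mathcal{F}_2)\mapsto\beta$ and $S(\xi_{(1)})\xi_{(2)}\mapsto\epsilon(\xi)1$, leaving $\beta^{-1}\beta\epsilon(\xi)=\epsilon(\xi)1$; the other antipode identity is symmetric. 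This bookkeeping with the cocycle condition and anti-multiplicativity of $S$ is the main obstacle — it is elementary but the indices must be tracked carefully.

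For the triangular part, assume $H$ is triangular with $\mathcal{R}$ and set $\mathcal{R}_\mathcal{F}=\mathcal{F}_{21}\mathcal{R}\mathcal{F}^{-1}$. Invertibility is clear, and the quasi-cocommutativity $\Delta_{\mathcal{F},21}(\xi)=\mathcal{R}_\mathcal{F}\Delta_\mathcal{F}(\xi)\mathcal{R}^{-1}_\mathcal{F}$ follows by substituting the definitions and using \eqref{eq03} for $\mathcal{R}$: $\Delta_{\mathcal{F},21}(\xi)=\mathcal{F}_{21}\Delta_{21}(\xi)\mathcal{F}^{-1}_{21}=\mathcal{F}_{21}\mathcal{R}\Delta(\xi)\mathcal{R}^{-1}\mathcal{F}^{-1}_{21}=\mathcal{F}_{21}\mathcal{R}\mathcal{F}^{-1}\Delta_\mathcal{F}(\xi)\mathcal{F}\mathcal{R}^{-1}\mathcal{F}^{-1}_{21}$, and the flanking elements are $\mathcal{R}_\mathcal{F}$ and its inverse. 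The hexagon relations $(\Delta_\mathcal{F}\otimes\mathrm{id})(\mathcal{R}_\mathcal{F})=(\mathcal{R}_\mathcal{F})_{13}(\mathcal{R}_\mathcal{F})_{23}$ and its mirror are the remaining computation: expand $(\Delta_\mathcal{F}\otimes\mathrm{id})(\mathcal{R}_\mathcal{F})=(\mathcal{F}\otimes1)(\Delta\otimes\mathrm{id})(\mathcal{F}_{21}\mathcal{R}\mathcal{F}^{-1})(\mathcal{F}^{-1}\otimes1)$ and repeatedly apply the $2$-cocycle condition, its inverse, and the original hexagon relations for $\mathcal{R}$ to reorganize the product into $(\mathcal{R}_\mathcal{F})_{13}(\mathcal{R}_\mathcal{F})_{23}$; that $\mathcal{R}^{-1}_\mathcal{F}=(\mathcal{R}_\mathcal{F})_{21}$ follows from $\mathcal{R}^{-1}=\mathcal{R}_{21}$ and the fact that $\tau_{H,H}$ swaps $\mathcal{F}_{21}\mathcal{R}\mathcal{F}^{-1}$ into $\mathcal{F}\mathcal{R}_{21}\mathcal{F}^{-1}_{21}$, which is exactly the inverse. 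All of this is standard (it is \cite{Ma95}~Theorem~2.3.4), so I would present the coassociativity and antipode verifications in some detail and leave the hexagon manipulations as a guided computation.
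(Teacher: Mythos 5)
The paper does not prove this proposition at all: it is stated with a pointer to \cite{Ma95}, Theorem~2.3.4, so there is no in-text argument to compare yours against. Your sketch is the standard direct verification and its architecture is sound: coassociativity of $\Delta_\mathcal{F}$ by comparing the two conjugating elements via the $2$-cocycle condition and its inverse, the counit axiom from normalization, multiplicativity of $\Delta_\mathcal{F}$ because conjugation by an invertible element is an algebra automorphism, quasi-cocommutativity and $(\mathcal{R}_\mathcal{F})^{-1}=(\mathcal{R}_\mathcal{F})_{21}$ by direct substitution, and the hexagons by combining the cocycle identities with the original hexagons. One point deserves tightening: the invertibility of $\beta=\mathcal{F}_1S(\mathcal{F}_2)$ with $\beta^{-1}=S(\mathcal{F}_1^{-1})\mathcal{F}_2^{-1}$ does \emph{not} follow from the antipode property and normalization alone, as you suggest. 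The temptation is to evaluate $\mu\circ(S\otimes\mathrm{id})$ on $\mathcal{F}\mathcal{F}^{-1}=1\otimes1$ or $\mathcal{F}^{-1}\mathcal{F}=1\otimes1$, but that map is not multiplicative, so the product does not factor into $\beta\cdot S(\mathcal{F}_1^{-1})\mathcal{F}_2^{-1}$; the standard proof of this invertibility genuinely uses the $2$-cocycle condition (contracted against $\epsilon$ and $S$ in the appropriate legs). With that repaired, the remaining bookkeeping you describe for the antipode axiom and the hexagon relations is exactly the computation carried out in the cited reference, and the proposal is correct.
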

Let $\mathcal{F}$ be a twist on $H$ and consider the corresponding
monoidal category $({}_{H_\mathcal{F}}\mathcal{M},\otimes_\mathcal{F})$
of representations of $H_\mathcal{F}$. Since $H$ and $H_\mathcal{F}$ coincide as
algebras every left $H$-module is automatically a left $H_\mathcal{F}$-module
and vice versa. However, the actions on the tensor product of modules
differ in general. For this reason we denote the monoidal structure of
${}_{H_\mathcal{F}}\mathcal{M}$ by $\otimes_\mathcal{F}$. Namely,
for two left $H$-modules (or equivalently two left $H_\mathcal{F}$-modules)
$\mathcal{M}$ and $\mathcal{M}'$ the tensor product
$\mathcal{M}\otimes_\mathcal{F}\mathcal{M}'$ coincides with
$\mathcal{M}\otimes\mathcal{M}'$ as a $\Bbbk$-module but
$\mathcal{M}\otimes_\mathcal{F}\mathcal{M}'$ is a left $H_\mathcal{F}$-module
via
\begin{equation}
    \xi\cdot(m\otimes_\mathcal{F}m')
    =(\xi_{\widehat{(1)}}\cdot m)\otimes_\mathcal{F}(\xi_{\widehat{(2)}}\cdot m'),
\end{equation}
where $\Delta_\mathcal{F}(\xi)=\xi_{\widehat{(1)}}\otimes\xi_{\widehat{(2)}}$,
while $\mathcal{M}\otimes\mathcal{M}'$ is a left $H_\mathcal{F}$-module
via
$$
\xi\cdot(m\otimes m')
=(\xi_{(1)}\cdot m)\otimes(\xi_{(2)}\cdot m')
$$
for all $\xi\in H_\mathcal{F}$, $m\in\mathcal{M}$ and $m'\in\mathcal{M}'$.
We are able to compare those pictures via a left $H_\mathcal{F}$-module
isomorphism
\begin{equation}
    \varphi_{\mathcal{M},\mathcal{M}'}
    \colon\mathcal{M}\otimes_\mathcal{F}\mathcal{M}'
    \ni(m\otimes_\mathcal{F}m')
    \mapsto(\mathcal{F}^{-1}_1\cdot m)\otimes(\mathcal{F}^{-1}_2\cdot m')
    \in\mathcal{M}\otimes\mathcal{M}'.
\end{equation}
In fact, $\varphi_{\mathcal{M},\mathcal{M}'}$ intertwines the
left $H_\mathcal{F}$-module actions, since
\begin{align*}
    \varphi_{\mathcal{M},\mathcal{M}'}(\xi\cdot(m\otimes_\mathcal{F}m'))
    =((\xi_{(1)}\mathcal{F}_1^{-1})\cdot m)
    \otimes((\xi_{(2)}\mathcal{F}_2^{-1})\cdot m)
    =\xi\cdot\varphi_{\mathcal{M},\mathcal{M}'}(m\otimes_\mathcal{F}m')
\end{align*}
for all $\xi\in H$, $m\in\mathcal{M}$ and $m'\in\mathcal{M}'$ and
admits an inverse left $H_\mathcal{F}$-module homomorphism
$$
\varphi^{-1}_{\mathcal{M},\mathcal{M}'}
\colon\mathcal{M}\otimes\mathcal{M}'
\ni(m\otimes m')
\mapsto(\mathcal{F}_1\cdot m)\otimes_\mathcal{F}(\mathcal{F}_2\cdot m')
\in\mathcal{M}\otimes_\mathcal{F}\mathcal{M}'.
$$
The map $\varphi$ gives rise to a monoidal equivalence. We formulate this in the
following theorem (c.f. \cite{Ka95}~Lemma~XV.3.7.).
\begin{theorem}\label{thm01}
For any twist $\mathcal{F}$ on $H$ there is a monoidal equivalence
of the monoidal categories $({}_H\mathcal{M},\otimes)$ and
$({}_{H_\mathcal{F}}\mathcal{M},\otimes_\mathcal{F})$. If $H$
is triangular we obtain a braided monoidal equivalence between
braided monoidal categories
$({}_H\mathcal{M},\otimes,c^\mathcal{R})$ and
$({}_{H_\mathcal{F}}\mathcal{M},\otimes_\mathcal{F},c^{\mathcal{R}_\mathcal{F}})$.
\end{theorem}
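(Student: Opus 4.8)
The plan is to realise the equivalence through the identity functor. Since $H$ and $H_\mathcal{F}$ share the same underlying algebra, a left $H$-module is literally the same datum as a left $H_\mathcal{F}$-module and an $H$-equivariant map is the same as an $H_\mathcal{F}$-equivariant map; hence the identity assignment $G\colon{}_H\mathcal{M}\to{}_{H_\mathcal{F}}\mathcal{M}$ is a functor which is bijective on objects and fully faithful, in particular an equivalence of the underlying categories. The task is to promote $G$ to a strong monoidal, resp.\ braided monoidal, functor; its quasi-inverse will then be the identity the other way, equipped with the inverse structure morphisms. For the monoidal coherence I would take the family $\varphi_{\mathcal{M},\mathcal{M}'}\colon\mathcal{M}\otimes_\mathcal{F}\mathcal{M}'\to\mathcal{M}\otimes\mathcal{M}'=G(\mathcal{M}\otimes\mathcal{M}')$ introduced above, which has already been checked to be an $H_\mathcal{F}$-linear isomorphism, together with the identity $\Bbbk\to G(\Bbbk)$ as unit constraint (legitimate because $\epsilon$, and hence the unit object, is untouched by twisting). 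Naturality of $\varphi$ in both slots is immediate, since it is defined by acting with the fixed legs of $\mathcal{F}^{-1}$ and module morphisms commute with the $H$-action.

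The substance of the proof is coherence with associativity. Taking the associativity constraints of all three categories to be the canonical $\Bbbk$-module ones, which I would suppress, this reduces to checking
\[
\varphi_{\mathcal{M},\mathcal{M}'\otimes\mathcal{M}''}\circ(\mathrm{id}\otimes_\mathcal{F}\varphi_{\mathcal{M}',\mathcal{M}''})
=\varphi_{\mathcal{M}\otimes\mathcal{M}',\mathcal{M}''}\circ(\varphi_{\mathcal{M},\mathcal{M}'}\otimes_\mathcal{F}\mathrm{id})
\]
as maps $\mathcal{M}\otimes_\mathcal{F}\mathcal{M}'\otimes_\mathcal{F}\mathcal{M}''\to\mathcal{M}\otimes\mathcal{M}'\otimes\mathcal{M}''$. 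Evaluating on $m\otimes_\mathcal{F}m'\otimes_\mathcal{F}m''$ and tracking the legs of the two occurring copies of $\mathcal{F}^{-1}$ together with the coproducts produced by the $H$-actions on the relevant tensor factors, the left-hand side turns out to act on $m\otimes m'\otimes m''$ by the element $(\mathrm{id}\otimes\Delta)(\mathcal{F}^{-1})(1\otimes\mathcal{F}^{-1})\in H^{\otimes 3}$ and the right-hand side by $(\Delta\otimes\mathrm{id})(\mathcal{F}^{-1})(\mathcal{F}^{-1}\otimes 1)$. These coincide precisely by the inverse $2$-cocycle condition recorded above; this is the one place where the defining property of a twist genuinely enters, and I expect this leg-counting to be the main obstacle, everything else being formal. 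The remaining two coherence squares (left and right unitality) collapse to the identities $\epsilon(\mathcal{F}^{-1}_1)\mathcal{F}^{-1}_2=1=\mathcal{F}^{-1}_1\epsilon(\mathcal{F}^{-1}_2)$, i.e.\ normalization of $\mathcal{F}^{-1}$. Since $\varphi$ and the unit constraint are isomorphisms and $G$ is an equivalence of underlying categories, this produces the asserted monoidal equivalence, with quasi-inverse $(\mathrm{id},\varphi^{-1})$.

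For triangular $H$ it then remains to check that $(G,\varphi)$ transports the braiding $c^\mathcal{R}$ to $c^{\mathcal{R}_\mathcal{F}}$, i.e.\ that
\[
c^\mathcal{R}_{\mathcal{M},\mathcal{M}'}\circ\varphi_{\mathcal{M},\mathcal{M}'}
=\varphi_{\mathcal{M}',\mathcal{M}}\circ c^{\mathcal{R}_\mathcal{F}}_{\mathcal{M},\mathcal{M}'}
\colon\mathcal{M}\otimes_\mathcal{F}\mathcal{M}'\to\mathcal{M}'\otimes\mathcal{M},
\]
where $c^{\mathcal{R}_\mathcal{F}}$ is built from $\mathcal{R}_\mathcal{F}=\mathcal{F}_{21}\mathcal{R}\mathcal{F}^{-1}$ as in the previous subsection. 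Expanding both composites on $m\otimes_\mathcal{F}m'$, the left-hand side acts on $m'\otimes m$ by $\mathcal{R}^{-1}\mathcal{F}^{-1}_{21}$, while the right-hand side contributes $\mathcal{F}^{-1}\mathcal{R}_\mathcal{F}^{-1}=\mathcal{F}^{-1}(\mathcal{F}\mathcal{R}^{-1}\mathcal{F}_{21}^{-1})=\mathcal{R}^{-1}\mathcal{F}^{-1}_{21}$, using $\mathcal{F}^{-1}\mathcal{F}=1$ and $(\mathcal{F}_{21})^{-1}=(\mathcal{F}^{-1})_{21}$; the two sides therefore agree, with no hypothesis needed beyond the formula for $\mathcal{R}_\mathcal{F}$. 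Hence $(G,\varphi)$ is a braided monoidal equivalence between $({}_H\mathcal{M},\otimes,c^\mathcal{R})$ and $({}_{H_\mathcal{F}}\mathcal{M},\otimes_\mathcal{F},c^{\mathcal{R}_\mathcal{F}})$, which is the second assertion.
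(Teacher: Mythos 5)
Your proposal is correct and follows exactly the route the paper intends: the identity functor equipped with the natural isomorphism $\varphi_{\mathcal{M},\mathcal{M}'}$ already introduced in the text, with associativity coherence reduced to the inverse $2$-cocycle condition, unitality to normalization, and the braiding compatibility to the formula $\mathcal{R}_\mathcal{F}^{-1}=\mathcal{F}\mathcal{R}^{-1}\mathcal{F}_{21}^{-1}$. The paper itself only verifies that $\varphi$ is an $H_\mathcal{F}$-module isomorphism and delegates the remaining coherence checks to the cited literature, so you have simply supplied the details it omits.
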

In the light of this theorem Drinfel'd twists are sometimes referred to as
\textit{gauge transformations} or \textit{gauge equivalences} (see e.g.
\cite{Ka95}~Section~XV.3). This nomenclature is affirmed by the observation that
$1\otimes 1\in H\otimes H$ is a Drinfel'd twist on any Hopf algebra $H$ and if
$\mathcal{F}$ is a twist on $H$ and $\mathcal{F}'$ a twist on $H_\mathcal{F}$,
the product $\mathcal{F}'\mathcal{F}$ is a Drinfel'd twist on $H$ such that
$H_{\mathcal{F}'\mathcal{F}}=(H_\mathcal{F})_{\mathcal{F}'}$.

\subsection{Equivariant Hopf Algebra Module Algebra Representations}

For some applications the monoidal equivalence
${}_H\mathcal{M}\cong{}_{H_\mathcal{F}}\mathcal{M}$ of
Theorem~\ref{thm01} is too arbitrary. Motivated from differential geometry
we want to study equivariant module algebra bimodules instead, which generalize
equivariant vector bundles. However, the restriction of the monoidal equivalence
to those bimodules fails to be braided in general. To fix this
we have to restrict ourselves to braided commutative algebras and equivariant braided
symmetric algebra bimodules. Nonetheless, this setting is rich enough to
allow for several interesting examples, e.g. the braided multivector fields
and differential forms of a braided commutative algebra, as we see in
Sections~\ref{Sec3.1}. 

Fix a Hopf algebra $(H,\mu,\eta,\Delta,\epsilon,S)$ and 
consider a left $H$-module $(\mathcal{A},\lambda)$ which is an algebra
with product $\mu_\mathcal{A}$ and unit $\eta_\mathcal{A}$ in addition.
It is said to be a \textit{left $H$-module algebra} if the module action
respects the algebra structure, i.e. if
$$
\lambda\circ(\mathrm{id}_H\otimes\mu_\mathcal{A})
=\mu_\mathcal{A}\circ(\lambda\otimes\lambda)\circ
(\mathrm{id}_H\otimes\tau_{H,\mathcal{A}}\otimes\mathrm{id}_\mathcal{A})
\circ(\Delta\otimes\mathrm{id}_{\mathcal{A}\otimes\mathcal{A}})
\colon H\otimes\mathcal{A}\otimes\mathcal{A}\rightarrow\mathcal{A}
$$
and $\lambda\circ(\mathrm{id}_H\otimes\eta_\mathcal{A})
=\eta_\mathcal{A}\circ\epsilon\colon H\rightarrow\mathcal{A}$
hold. In the following we often write $\mu_\mathcal{A}(a\otimes b)=a\cdot b$
for $a,b\in\mathcal{A}$ and $\xi\rhd a$ for the module action of $\xi\in H$
on $a\in\mathcal{A}$. The units of $\mathcal{A}$ and $H$ are sometimes denoted
by $1_\mathcal{A}$ and $1_H$, respectively or simply by $1$. In this notation
the module algebra axioms read
\begin{equation}
    \xi\rhd(a\cdot b)=(\xi_{(1)}\rhd a)\cdot(\xi_{(2)}\rhd b)
    \text{ and }
    \xi\rhd 1_\mathcal{A}=\epsilon(\xi)1_\mathcal{A}
\end{equation}
for all $\xi\in H$ and $a,b\in\mathcal{A}$.
A \textit{left $H$-module algebra
homomorphism} is a left $H$-module homomorphism between left $H$-module
algebras which is also an algebra homomorphism. The category of
left $H$-module algebras is denoted by ${}_H\mathcal{A}$.
\begin{lemma}[\cite{AsSh14}~Theorem~3.4]\label{lemma01}
Let $\mathcal{F}$ be a twist on $H$ and consider a left $H$-module
algebra $(\mathcal{A},\cdot,1_\mathcal{A})$.
Then $\mathcal{A}_\mathcal{F}=
(\mathcal{A},\cdot_\mathcal{F},1_\mathcal{A})$
is a left $H_\mathcal{F}$-module algebra with respect to the same
left $H$-module action, where
\begin{equation}
    a\cdot_\mathcal{F}b
    =(\mathcal{F}_1^{-1}\rhd a)
    \cdot(\mathcal{F}_2^{-1}\rhd b)
\end{equation}
for all $a,b\in\mathcal{A}$.
\end{lemma}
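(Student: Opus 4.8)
The plan is to verify directly that the deformed product $\cdot_\mathcal{F}$ is associative, unital, and $H_\mathcal{F}$-equivariant, using only the $2$-cocycle and normalization conditions on $\mathcal{F}$ together with the module algebra axioms for $\mathcal{A}$. First I would check the unit axiom: since $\mathcal{F}$ is normalized, $(\epsilon\otimes\mathrm{id})(\mathcal{F}^{-1})=1=(\mathrm{id}\otimes\epsilon)(\mathcal{F}^{-1})$, and using $\xi\rhd 1_\mathcal{A}=\epsilon(\xi)1_\mathcal{A}$ we get $1_\mathcal{A}\cdot_\mathcal{F}a=(\mathcal{F}_1^{-1}\rhd 1_\mathcal{A})\cdot(\mathcal{F}_2^{-1}\rhd a)=\epsilon(\mathcal{F}_1^{-1})(\mathcal{F}_2^{-1}\rhd a)=a$, and symmetrically on the other side.

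Next I would tackle associativity, which is the computational heart of the lemma. Expanding $(a\cdot_\mathcal{F}b)\cdot_\mathcal{F}c$ one obtains $(\mathcal{F}_1^{-1}\mathcal{F}_{1(1)}^{\prime -1}\rhd a)\cdot(\mathcal{F}_2^{-1}\mathcal{F}_{1(2)}^{\prime -1}\rhd b)\cdot(\mathcal{F}_2^{\prime -1}\rhd c)$, where the inner use of the module algebra axiom splits the action of $\mathcal{F}_1^{-1}$ via $\Delta$; similarly $a\cdot_\mathcal{F}(b\cdot_\mathcal{F}c)$ gives a term involving $(\mathcal{F}_1^{\prime -1}\rhd a)\cdot(\mathcal{F}_1^{-1}\mathcal{F}_{2(1)}^{\prime -1}\rhd b)\cdot(\mathcal{F}_2^{-1}\mathcal{F}_{2(2)}^{\prime -1}\rhd c)$. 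The two expressions agree precisely because of the inverse $2$-cocycle condition $(\Delta\otimes\mathrm{id})(\mathcal{F}^{-1})(\mathcal{F}^{-1}\otimes 1)=(\mathrm{id}\otimes\Delta)(\mathcal{F}^{-1})(1\otimes\mathcal{F}^{-1})$ recalled in the excerpt, which matches the three-fold action element by element. I expect this bookkeeping — keeping the primed and unprimed legs of $\mathcal{F}^{-1}$ straight while invoking coassociativity of $\Delta$ — to be the main obstacle, though it is routine once the cocycle identity is applied in the right leg configuration.

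Finally I would check that the same map $\lambda$ makes $\mathcal{A}_\mathcal{F}$ a left $H_\mathcal{F}$-module algebra, i.e. $\xi\rhd(a\cdot_\mathcal{F}b)=(\xi_{\widehat{(1)}}\rhd a)\cdot_\mathcal{F}(\xi_{\widehat{(2)}}\rhd b)$ where $\Delta_\mathcal{F}(\xi)=\mathcal{F}\Delta(\xi)\mathcal{F}^{-1}$. Unwinding the right-hand side gives $(\mathcal{F}_1^{-1}\mathcal{F}_1\xi_{(1)}\mathcal{F}_1^{\prime -1}\rhd a)\cdot(\mathcal{F}_2^{-1}\mathcal{F}_2\xi_{(2)}\mathcal{F}_2^{\prime -1}\rhd b)$; the $\mathcal{F}\mathcal{F}^{-1}=1\otimes 1$ cancellation collapses this to $(\xi_{(1)}\mathcal{F}_1^{\prime -1}\rhd a)\cdot(\xi_{(2)}\mathcal{F}_2^{\prime -1}\rhd b)$, which by the original module algebra axiom for $\mathcal{A}$ equals $\xi\rhd\big((\mathcal{F}_1^{\prime -1}\rhd a)\cdot(\mathcal{F}_2^{\prime -1}\rhd b)\big)=\xi\rhd(a\cdot_\mathcal{F}b)$, as desired. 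Since $\mathcal{A}$ and $\mathcal{A}_\mathcal{F}$ share the same underlying $\Bbbk$-module and $H$-action, nothing further about the module structure needs checking, and the proof is complete.
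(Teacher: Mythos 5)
The paper itself gives no proof of this lemma: it is quoted from \cite{AsSh14}, Theorem~3.4, and the only place its verification is echoed is in the proof of the Drinfel'd functor lemma, which says the bimodule analogues ``follow \dots in complete analogy to Lemma~\ref{lemma01}.'' Your direct verification is the standard one and is the right strategy: the unit axiom from the normalization of $\mathcal{F}^{-1}$, associativity from the inverse $2$-cocycle condition, and $H_\mathcal{F}$-equivariance from $\Delta_\mathcal{F}=\mathcal{F}\Delta(\cdot)\mathcal{F}^{-1}$ together with the componentwise cancellation $\mathcal{F}^{-1}\mathcal{F}=1\otimes 1$ and the original module algebra axiom. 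The last step is written correctly and completely.

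There is, however, a leg-ordering slip in your associativity display that you should fix, because as written it appeals to an identity that is generally false. Since the outer product acts \emph{last}, its twist legs must stand to the \emph{left}: the correct expansion is
\begin{equation*}
(a\cdot_\mathcal{F}b)\cdot_\mathcal{F}c
=((\mathcal{F}^{\prime-1}_{1(1)}\mathcal{F}^{-1}_{1})\rhd a)\cdot
((\mathcal{F}^{\prime-1}_{1(2)}\mathcal{F}^{-1}_{2})\rhd b)\cdot
(\mathcal{F}^{\prime-1}_{2}\rhd c),
\end{equation*}
so that the element of $H^{\otimes 3}$ governing the left-nested product is $(\Delta\otimes\mathrm{id})(\mathcal{F}^{-1})(\mathcal{F}^{-1}\otimes 1)$, and likewise $(\mathrm{id}\otimes\Delta)(\mathcal{F}^{-1})(1\otimes\mathcal{F}^{-1})$ for the right-nested one; these are precisely the two sides of the inverse $2$-cocycle condition. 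Your displayed expressions have the coproducted legs multiplied on the \emph{right}, which corresponds to $(\mathcal{F}^{-1}\otimes 1)(\Delta\otimes\mathrm{id})(\mathcal{F}^{-1})=(1\otimes\mathcal{F}^{-1})(\mathrm{id}\otimes\Delta)(\mathcal{F}^{-1})$, i.e.\ to the assertion that $\mathcal{F}^{-1}$ is itself a $2$-cocycle --- a different and in general false statement. With the ordering corrected, the argument closes exactly as you describe.
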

Fix a left $H$-module algebra $\mathcal{A}$ in the following
and consider the category ${}_\mathcal{A}\mathcal{M}$ of left
$\mathcal{A}$-modules. In order to compare it to the representation
theory of the deformed algebra $\mathcal{A}_\mathcal{F}$
we have to incorporate an additional action of the Hopf algebra $H$
on the modules. To obtain interesting results
this action has to respect the $\mathcal{A}$-module structure.
Accordingly we consider the subcategory ${}_\mathcal{A}^H\mathcal{M}$ of
\textit{$H$-equivariant left $\mathcal{A}$-modules}. The objects of
${}_\mathcal{A}^H\mathcal{M}$ are left $H$-modules $\mathcal{M}$,
which are left $\mathcal{A}$-modules in addition such that
\begin{equation}
    \xi\rhd(a\cdot m)
    =(\xi_{(1)}\rhd a)\cdot(\xi_{(2)}\rhd m)
\end{equation}
for all $\xi\in H$, $a\in\mathcal{A}$ and $m\in\mathcal{M}$.
Morphisms are left $H$-module homomorphisms between
$H$-equivariant left $\mathcal{A}$-modules which are also
left $\mathcal{A}$-module homomorphisms.
\begin{lemma}
Let $\mathcal{F}$ be a twist on $H$ and $\mathcal{A}$ a left $H$-module
algebra. Then there is a functor
\begin{equation}
    \mathrm{Drin}_\mathcal{F}\colon
    {}_\mathcal{A}^H\mathcal{M}
    \rightarrow
    {}_{\mathcal{A}_\mathcal{F}}^{H_\mathcal{F}}\mathcal{M},
\end{equation}
called Drinfel'd functor,
which is the identity on morphisms and assigns to every
$H$-equivariant left $\mathcal{A}$-module $\mathcal{M}$
the same left $H$-module but with left $\mathcal{A}_\mathcal{F}$-module
structure given by
\begin{equation}
    a\cdot_\mathcal{F}m
    =(\mathcal{F}_1^{-1}\rhd a)\cdot(\mathcal{F}_2^{-1}\rhd m)
\end{equation}
for all $a\in\mathcal{A}$ and $m\in\mathcal{M}$.
\end{lemma}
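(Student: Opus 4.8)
The plan is to verify three things: that the prescribed formula indeed defines a left $\mathcal{A}_\mathcal{F}$-module structure on the underlying $\Bbbk$-module of $\mathcal{M}$, that this structure is $H_\mathcal{F}$-equivariant with respect to the unchanged $H$-action, and finally that the assignment is functorial, i.e. that a morphism in ${}_\mathcal{A}^H\mathcal{M}$ remains a morphism after applying $\mathrm{Drin}_\mathcal{F}$. The unit axiom is immediate from the normalization of $\mathcal{F}^{-1}$: one has $1_\mathcal{A}\cdot_\mathcal{F}m=(\mathcal{F}_1^{-1}\rhd 1_\mathcal{A})\cdot(\mathcal{F}_2^{-1}\rhd m)=\epsilon(\mathcal{F}_1^{-1})(\mathcal{F}_2^{-1}\rhd m)=(\epsilon\otimes\mathrm{id})(\mathcal{F}^{-1})\rhd m=1_H\rhd m=m$, using that $\mathcal{M}$ is a left $H$-module and $\mathcal{A}$ a left $H$-module algebra.

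The associativity axiom $a\cdot_\mathcal{F}(b\cdot_\mathcal{F}m)=(a\cdot_\mathcal{F}b)\cdot_\mathcal{F}m$ is the computational core. I would expand the left-hand side as $(\mathcal{F}_1^{-1}\rhd a)\cdot\big((\mathcal{F}_2^{-1}{}_{(1)}\mathcal{F}_1'^{-1})\rhd b\big)\cdot\big((\mathcal{F}_2^{-1}{}_{(2)}\mathcal{F}_2'^{-1})\rhd m\big)$, using equivariance of the $\mathcal{A}$-action on $\mathcal{M}$ (to distribute $\mathcal{F}_2^{-1}$ across the product $b\cdot(\cdots)$) and the module-algebra property on $\mathcal{A}$. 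On the right-hand side I get $\big((\mathcal{F}_1^{-1}\mathcal{F}_1'^{-1}{}_{(1)})\rhd a\big)\cdot\big((\mathcal{F}_2^{-1}\mathcal{F}_1'^{-1}{}_{(2)})\rhd b\big)\cdot\big(\mathcal{F}_2'^{-1}\rhd m\big)$ (where I reuse $\mathcal{F}'^{-1}$ as the outer copy). Matching the two is exactly the inverse $2$-cocycle condition $(\Delta\otimes\mathrm{id})(\mathcal{F}^{-1})(\mathcal{F}^{-1}\otimes 1)=(\mathrm{id}\otimes\Delta)(\mathcal{F}^{-1})(1\otimes\mathcal{F}^{-1})$ stated in the excerpt, applied inside the $H$-action and then using ordinary associativity of $\mathcal{A}$ acting on $\mathcal{M}$. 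This bookkeeping of Sweedler legs is the main obstacle — nothing deep, but one must be careful about which copy of $\mathcal{F}^{-1}$ sits where and in which order the coproducts are applied.

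For $H_\mathcal{F}$-equivariance I must check $\xi\rhd(a\cdot_\mathcal{F}m)=(\xi_{\widehat{(1)}}\rhd a)\cdot_\mathcal{F}(\xi_{\widehat{(2)}}\rhd m)$ where $\Delta_\mathcal{F}(\xi)=\mathcal{F}\Delta(\xi)\mathcal{F}^{-1}$. Expanding the left side with the ordinary $H$-equivariance of both the $\mathcal{A}$-action and the $\mathcal{M}$-action gives $(\xi_{(1)}\mathcal{F}_1^{-1}\rhd a)\cdot(\xi_{(2)}\mathcal{F}_2^{-1}\rhd m)$; expanding the right side, the two factors of $\mathcal{F}$ coming from $\Delta_\mathcal{F}(\xi)$ recombine with the two factors of $\mathcal{F}^{-1}$ in $\cdot_\mathcal{F}$ so that $\mathcal{F}\mathcal{F}^{-1}=1\otimes 1$ cancels, leaving exactly $(\xi_{(1)}\mathcal{F}_1^{-1}\rhd a)\cdot(\xi_{(2)}\mathcal{F}_2^{-1}\rhd m)$. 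This cancellation is clean and is really the reason the deformed action is the natural one. Finally, functoriality is essentially automatic: if $\Phi\colon\mathcal{M}\to\mathcal{M}'$ is $\mathcal{A}$-linear and $H$-linear, then $\Phi(a\cdot_\mathcal{F}m)=\Phi\big((\mathcal{F}_1^{-1}\rhd a)\cdot(\mathcal{F}_2^{-1}\rhd m)\big)=(\mathcal{F}_1^{-1}\rhd a)\cdot\Phi(\mathcal{F}_2^{-1}\rhd m)=(\mathcal{F}_1^{-1}\rhd a)\cdot(\mathcal{F}_2^{-1}\rhd\Phi(m))=a\cdot_\mathcal{F}\Phi(m)$, and $H$-linearity is unchanged since the $H$-action is the same; composition and identities are preserved because $\mathrm{Drin}_\mathcal{F}$ is the identity on morphisms. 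One could also shorten the whole argument by invoking Lemma~\ref{lemma01} for the special case $\mathcal{M}=\mathcal{A}$ and then noting that the same computation, with $b$ replaced by $m$, goes through verbatim — but writing out the two axioms directly is cleaner for the reader.
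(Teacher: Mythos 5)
Your proposal is correct and follows essentially the same route as the paper: the paper defers the associativity and $H_\mathcal{F}$-equivariance of $\cdot_\mathcal{F}$ to "complete analogy with Lemma~\ref{lemma01}" (the option you mention at the end) and only writes out the $\mathcal{A}_\mathcal{F}$-linearity of morphisms, whereas you spell out the inverse $2$-cocycle and $\mathcal{F}\mathcal{F}^{-1}$-cancellation arguments explicitly. The extra detail, including the unit axiom via normalization of $\mathcal{F}^{-1}$, is accurate and changes nothing of substance.
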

\begin{proof}
In fact, the obtained
$\Bbbk$-module $\mathcal{M}_\mathcal{F}$ is an object in 
${}_{\mathcal{A}_\mathcal{F}}^{H_\mathcal{F}}\mathcal{M}$, since
\begin{align*}
    (a\cdot_\mathcal{F}b)\cdot_\mathcal{F}m
    =a\cdot_\mathcal{F}(b\cdot_\mathcal{F}m)
    \text{ and }
    \xi\rhd(a\cdot_\mathcal{F}m)
    =(\xi_{\widehat{(1)}}\rhd a)
    \cdot_\mathcal{F}(\xi_{\widehat{(2)}}\rhd m)
\end{align*}
follow for all $\xi\in H$, $a,b\in\mathcal{A}$ and $m\in\mathcal{M}$
in complete analogy to Lemma~\ref{lemma01}. Furthermore, any
morphisms $\phi\colon\mathcal{M}\rightarrow\mathcal{M}'$ in
${}_\mathcal{A}^H\mathcal{M}$ is automatically a morphism in
${}_{\mathcal{A}_\mathcal{F}}^{H_\mathcal{F}}\mathcal{M}$, where
left $H_\mathcal{F}$-linearity is trivially given and left 
$\mathcal{A}_\mathcal{F}$-linearity
follows since
\begin{align*}
    \phi(a\cdot_\mathcal{F}m)
    =&\phi((\mathcal{F}_1^{-1}\rhd a)\cdot(\mathcal{F}_2^{-1}\rhd m))
    =(\mathcal{F}_1^{-1}\rhd a)\cdot\phi(\mathcal{F}_2^{-1}\rhd m)\\
    =&(\mathcal{F}_1^{-1}\rhd a)\cdot(\mathcal{F}_2^{-1}\rhd\phi(m))
    =a\cdot_\mathcal{F}\phi(b)
\end{align*}
for all $a\in\mathcal{A}$ and $m\in\mathcal{M}$.
\end{proof}
One might ask if the monoidal equivalence of Theorem~\ref{thm01} 
restricts to ${}_\mathcal{A}^H\mathcal{M}$.
However, ${}_\mathcal{A}^H\mathcal{M}$ is not monoidal with respect to
the usual tensor product of $\Bbbk$-modules, since there is no coproduct
on $\mathcal{A}$ in general to distribute the left
$\mathcal{A}$-module action to the tensor factors. To obtain a monoidal
category we need two specifications: first we consider the
subcategory of $H$-equivariant $\mathcal{A}$-bimodules
${}_\mathcal{A}^H\mathcal{M}_\mathcal{A}$, i.e. there are commuting
left and right $\mathcal{A}$-actions which are equivariant with respect to
the left $H$-action. Secondly, we consider the tensor product
$\otimes_\mathcal{A}$ over $\mathcal{A}$, which is defined for
two objects $\mathcal{M}$ and $\mathcal{M}'$ by the quotient
$$
\mathcal{M}\otimes\mathcal{M}'/N_{\mathcal{M},\mathcal{M}'},
$$
where $N_{\mathcal{M},\mathcal{M}}
=\mathrm{im}(\rho_\mathcal{M}\otimes\mathrm{id}_{\mathcal{M}'}
-\mathrm{id}_\mathcal{M}\otimes\lambda_{\mathcal{M}'})$ and
$\lambda_{\mathcal{M}'}$ and $\rho_\mathcal{M}$ denote the left and
right $\mathcal{A}$-actions on $\mathcal{M}'$ and $\mathcal{M}$, respectively.
As a consequence one has
\begin{equation}
    (m\cdot a)\otimes_\mathcal{A}m'
    =m\otimes_\mathcal{A}(a\cdot m')
\end{equation}
for all $a\in\mathcal{A}$, $m\in\mathcal{M}$ and $m'\in\mathcal{M}'$. Then
$\mathcal{M}\otimes_\mathcal{A}\mathcal{M}'$ is an $H$-equivariant
$\mathcal{A}$-bimodule, with induced left $H$-action and left and right
$\mathcal{A}$-actions given by
\begin{equation}
    a\cdot(m\otimes_\mathcal{A}m')
    =(a\cdot m)\otimes_\mathcal{A}m'
    \text{ and }
    (m\otimes_\mathcal{A}m')\cdot a
    =m\otimes_\mathcal{A}(m'\cdot a)
\end{equation}
for all $a\in\mathcal{A}$, $m\in\mathcal{M}$ and $m'\in\mathcal{M}'$.
On morphisms $\phi\colon\mathcal{M}\rightarrow\mathcal{N}$ and 
$\psi\colon\mathcal{M}'\rightarrow\mathcal{N}'$ of
${}_\mathcal{A}^H\mathcal{M}_\mathcal{A}$ one defines 
$(\phi\otimes_\mathcal{A}\psi)(m\otimes_\mathcal{A}m')
=\phi(m)\otimes_\mathcal{A}\psi(m')$ for all
$m\in\mathcal{M}$ and $m'\in\mathcal{M}'$.
\begin{proposition}
The tuple
$({}_\mathcal{A}^H\mathcal{M}_\mathcal{A},\otimes_\mathcal{A})$
is a monoidal category and for a twist $\mathcal{F}$ on $H$
the monoidal equivalence of Theorem~\ref{thm01} descends to a monoidal
equivalence of
$({}_\mathcal{A}^H\mathcal{M}_\mathcal{A},\otimes_\mathcal{A})$
and
$({}_{\mathcal{A}_\mathcal{F}}^{H_\mathcal{F}}
\mathcal{M}_{\mathcal{A}_\mathcal{F}},\otimes_{\mathcal{A}_\mathcal{F}})$.
\end{proposition}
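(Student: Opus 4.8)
The plan is to verify the two assertions separately. For the monoidal structure of $({}_\mathcal{A}^H\mathcal{M}_\mathcal{A},\otimes_\mathcal{A})$, I would first check that $\otimes_\mathcal{A}$ is well defined on objects: given $H$-equivariant $\mathcal{A}$-bimodules $\mathcal{M},\mathcal{M}'$, the $\Bbbk$-module $\mathcal{M}\otimes\mathcal{M}'/N_{\mathcal{M},\mathcal{M}'}$ carries the stated left $\mathcal{A}$-action, right $\mathcal{A}$-action and left $H$-action, and each of these descends to the quotient. For instance, the left $H$-action $\xi\rhd(m\otimes m')=(\xi_{(1)}\rhd m)\otimes(\xi_{(2)}\rhd m')$ preserves $N_{\mathcal{M},\mathcal{M}'}$ because $H$-equivariance of the bimodules gives $\xi\rhd(m\cdot a\otimes m' - m\otimes a\cdot m')=(\xi_{(1)}\rhd m)\cdot(\xi_{(2)(1)}\rhd a)\otimes(\xi_{(2)(2)}\rhd m') - \dots$, and coassociativity makes the two terms cancel modulo $N$. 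One then checks the equivariance compatibility $\xi\rhd(a\cdot(m\otimes_\mathcal{A}m'))=(\xi_{(1)}\rhd a)\cdot(\xi_{(2)}\rhd(m\otimes_\mathcal{A}m'))$ directly from the definitions, and similarly for the right action, so that $\mathcal{M}\otimes_\mathcal{A}\mathcal{M}'$ is again an object of ${}_\mathcal{A}^H\mathcal{M}_\mathcal{A}$. The unit object is $\mathcal{A}$ itself, regarded as a bimodule over itself with left $H$-action $\lambda$; the associativity constraint is the usual one for $\otimes_\mathcal{A}$ of bimodules, and one verifies it is a morphism in ${}_\mathcal{A}^H\mathcal{M}_\mathcal{A}$ (i.e. $H$-equivariant in addition to $\mathcal{A}$-bilinear), which is immediate since it is built from the identity maps on the underlying $\Bbbk$-modules. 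The unitors and the pentagon and triangle axioms are then inherited from the monoidal category of $\mathcal{A}$-bimodules, since all structure maps are the standard ones and we have only checked that they additionally respect the $H$-action. Functoriality of $\otimes_\mathcal{A}$ on morphisms $\phi\otimes_\mathcal{A}\psi$ is likewise routine: well-definedness on the quotient uses $\mathcal{A}$-bilinearity of $\phi,\psi$, and the result is $H$-equivariant because $\phi,\psi$ are.

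For the second assertion I would produce the functor by composing $\mathrm{Drin}_\mathcal{F}$ on objects (which, as in the preceding lemma, is the identity on underlying $\Bbbk$-modules, endows them with the deformed $\mathcal{A}_\mathcal{F}$-bimodule structure $a\cdot_\mathcal{F}m=(\mathcal{F}_1^{-1}\rhd a)\cdot m$ on the left and $m\cdot_\mathcal{F}a=m\cdot(\mathcal{F}_2^{-1}\rhd a)$ on the right, and leaves the $H$-action unchanged but now views it as an $H_\mathcal{F}$-action) with the natural transformation furnished by $\varphi$ from Theorem~\ref{thm01}. Concretely, for $\mathcal{M},\mathcal{M}'$ in ${}_\mathcal{A}^H\mathcal{M}_\mathcal{A}$ I must exhibit a left $H_\mathcal{F}$-module isomorphism
\begin{equation*}
    \varphi^{\mathcal{A}}_{\mathcal{M},\mathcal{M}'}\colon
    \mathrm{Drin}_\mathcal{F}(\mathcal{M})\otimes_{\mathcal{A}_\mathcal{F}}\mathrm{Drin}_\mathcal{F}(\mathcal{M}')
    \longrightarrow
    \mathrm{Drin}_\mathcal{F}(\mathcal{M}\otimes_\mathcal{A}\mathcal{M}')
\end{equation*}
induced by $m\otimes_\mathcal{F}m'\mapsto(\mathcal{F}_1^{-1}\rhd m)\otimes_\mathcal{A}(\mathcal{F}_2^{-1}\rhd m')$. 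The key points to check are: (i) this $\Bbbk$-linear map $\mathcal{M}\otimes\mathcal{M}'\to\mathcal{M}\otimes_\mathcal{A}\mathcal{M}'$ descends to the quotient defining $\otimes_{\mathcal{A}_\mathcal{F}}$ on the source — i.e. it kills $N^\mathcal{F}_{\mathcal{M},\mathcal{M}'}=\mathrm{im}(\rho^\mathcal{F}_\mathcal{M}\otimes\mathrm{id}-\mathrm{id}\otimes\lambda^\mathcal{F}_{\mathcal{M}'})$ — which follows from the $2$-cocycle condition on $\mathcal{F}^{-1}$ together with the bimodule relation $(m\cdot a)\otimes_\mathcal{A}m'=m\otimes_\mathcal{A}(a\cdot m')$; (ii) the image actually lands where it should and is invertible, with inverse induced by $m\otimes_\mathcal{A}m'\mapsto(\mathcal{F}_1\rhd m)\otimes_\mathcal{F}(\mathcal{F}_2\rhd m')$, using $\mathcal{F}\mathcal{F}^{-1}=1\otimes 1$ and normalization; (iii) $\varphi^{\mathcal{A}}$ is $\mathcal{A}_\mathcal{F}$-bilinear and $H_\mathcal{F}$-equivariant, which is exactly the content of the $H_\mathcal{F}$-intertwining property of $\varphi$ already recorded in the excerpt, transported through the quotient; and (iv) naturality in $\mathcal{M},\mathcal{M}'$ and the hexagon/unit coherence conditions making $(\mathrm{Drin}_\mathcal{F},\varphi^\mathcal{A})$ a monoidal functor, which reduce to the coherence of $(F,\varphi)$ from Theorem~\ref{thm01} restricted to the relevant modules. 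Since $\varphi$ is already an isomorphism and $\mathrm{Drin}_\mathcal{F}$ is essentially surjective (any $\mathcal{A}_\mathcal{F}$-module is $\mathrm{Drin}_\mathcal{F}$ of the corresponding $\mathcal{A}$-module, by applying the inverse twist) and fully faithful (it is the identity on morphisms and on underlying $\Bbbk$-modules), the functor is a monoidal equivalence.

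The main obstacle is step (i): showing that the twist-conjugated flip $m\otimes_\mathcal{F}m'\mapsto(\mathcal{F}_1^{-1}\rhd m)\otimes_\mathcal{A}(\mathcal{F}_2^{-1}\rhd m')$ is compatible with passing to the balanced tensor products on \emph{both} sides simultaneously — on the source we quotient by the $\mathcal{A}_\mathcal{F}$-balancing relation, on the target by the $\mathcal{A}$-balancing relation — so one must carefully track how $\mathcal{F}^{-1}$ interacts with the deformed product $\cdot_\mathcal{F}$ versus the undeformed product $\cdot$. This is precisely where the inverse $2$-cocycle condition $(\Delta\otimes\mathrm{id})(\mathcal{F}^{-1})(\mathcal{F}^{-1}\otimes 1)=(\mathrm{id}\otimes\Delta)(\mathcal{F}^{-1})(1\otimes\mathcal{F}^{-1})$ enters, and it is a genuine (if standard) computation rather than a formal consequence; everything else — monoidality of the unit, coherence, essential surjectivity — is either inherited from Theorem~\ref{thm01} or routine.
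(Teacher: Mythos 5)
Your proposal is correct and follows the standard argument; the paper itself gives no proof of this proposition but defers to \cite{Schenkel2015}~Theorem~3.13, where essentially the argument you outline is carried out. In particular, you correctly isolate the only non-formal point: that $m\otimes_\mathcal{F}m'\mapsto(\mathcal{F}_1^{-1}\rhd m)\otimes_\mathcal{A}(\mathcal{F}_2^{-1}\rhd m')$ descends to the balanced tensor products on both sides, which indeed follows by expanding $(m\cdot_\mathcal{F}a)\otimes_\mathcal{F}m'$, applying the inverse $2$-cocycle identity $\mathcal{F}_{1(1)}^{-1}\mathcal{F}_1'^{-1}\otimes\mathcal{F}_{1(2)}^{-1}\mathcal{F}_2'^{-1}\otimes\mathcal{F}_2^{-1}=\mathcal{F}_1^{-1}\otimes\mathcal{F}_{2(1)}^{-1}\mathcal{F}_1'^{-1}\otimes\mathcal{F}_{2(2)}^{-1}\mathcal{F}_2'^{-1}$, and then using the $\mathcal{A}$-balancing relation and $H$-equivariance of the left action to reassemble $\mathcal{F}_2^{-1}\rhd(a\cdot_\mathcal{F}m')$.
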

We refer to \cite{Schenkel2015}~Theorem~3.13 for a proof and more information.
In contrast to Theorem~\ref{thm01} we do not obtain a symmetric braided 
monoidal structure on ${}_\mathcal{A}^H\mathcal{M}_\mathcal{A}$
if $H$ is triangular in general. The $H$-equivariant $\mathcal{A}$-bimodules
are still too arbitrary. One has to demand more symmetry before. We do so
by considering a \textit{braided commutative} left $H$-module algebra
$\mathcal{A}$ for a triangular Hopf algebra $(H,\mathcal{R})$ instead of a
general left $\mathcal{A}$-module algebra. This means that
$b\cdot a=(\mathcal{R}_1^{-1}\rhd a)\cdot(\mathcal{R}_2^{-1}\rhd b)$ holds
for all $a,b\in\mathcal{A}$. On the level of $\mathcal{A}$-bimodules
we want to keep this symmetry: an \textit{$H$-equivariant braided symmetric
$\mathcal{A}$-bimodule} $\mathcal{M}$ for a braided commutative
left $H$-module algebra $\mathcal{A}$ is an $H$-equivariant 
$\mathcal{A}$-bimodule such that
$m\cdot a=(\mathcal{R}_1^{-1}\rhd a)\cdot(\mathcal{R}_2^{-1}\rhd m)$
for all $a\in\mathcal{A}$ and $m\in\mathcal{M}$. In other words,
the left and right $\mathcal{A}$-actions are related via the universal
$\mathcal{R}$-matrix, mirroring the braided commutativity of $\mathcal{A}$.
These bimodules form a category
${}_\mathcal{A}^H\mathcal{M}_\mathcal{A}^\mathcal{R}$ with
morphisms being the usual left $H$-linear and left and right $\mathcal{A}$-linear
maps. A proof of the following statement can be found in
\cite{Schenkel2015}~Theorem~5.21.
\begin{theorem}\label{thm02}
If $H$ is triangular and $\mathcal{A}$ is braided commutative we obtain a
braided monoidal equivalence
\begin{equation}
    ({}_\mathcal{A}^H\mathcal{M}_\mathcal{A}^\mathcal{R},
    \otimes_\mathcal{A},c^\mathcal{R})
    \cong({}_{\mathcal{A}_\mathcal{F}}^{H_\mathcal{F}}
    \mathcal{M}_{\mathcal{A}_\mathcal{F}}^{\mathcal{R}_\mathcal{F}},
    \otimes_{\mathcal{A}_\mathcal{F}},c^{\mathcal{R}_\mathcal{F}})
\end{equation}
between braided monoidal categories.
\end{theorem}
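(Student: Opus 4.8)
The plan is to leverage Theorem~\ref{thm01} and the already-established monoidal equivalence of $({}_\mathcal{A}^H\mathcal{M}_\mathcal{A},\otimes_\mathcal{A})$ and $({}_{\mathcal{A}_\mathcal{F}}^{H_\mathcal{F}}\mathcal{M}_{\mathcal{A}_\mathcal{F}},\otimes_{\mathcal{A}_\mathcal{F}})$, showing first that this equivalence restricts to the braided symmetric subcategories and second that it intertwines the braidings $c^\mathcal{R}$ and $c^{\mathcal{R}_\mathcal{F}}$. The underlying functor is again the Drinfel'd functor $\mathrm{Drin}_\mathcal{F}$, which is the identity on $\Bbbk$-modules, keeps the left $H$-action, and replaces the $\mathcal{A}$-bimodule structure by the $\mathcal{A}_\mathcal{F}$-bimodule structure $a\cdot_\mathcal{F}m=(\mathcal{F}_1^{-1}\rhd a)\cdot(\mathcal{F}_2^{-1}\rhd m)$ on the left and the analogous formula $m\cdot_\mathcal{F}a=(\mathcal{F}_1^{-1}\rhd m)\cdot(\mathcal{F}_2^{-1}\rhd a)$ on the right.

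First I would check that $\mathrm{Drin}_\mathcal{F}$ sends $H$-equivariant braided symmetric $\mathcal{A}$-bimodules to $H_\mathcal{F}$-equivariant braided symmetric $\mathcal{A}_\mathcal{F}$-bimodules. So suppose $m\cdot a=(\mathcal{R}_1^{-1}\rhd a)\cdot(\mathcal{R}_2^{-1}\rhd m)$; I must deduce $m\cdot_\mathcal{F}a=(\mathcal{R}_{\mathcal{F},1}^{-1}\rhd a)\cdot_\mathcal{F}(\mathcal{R}_{\mathcal{F},2}^{-1}\rhd m)$ with $\mathcal{R}_\mathcal{F}=\mathcal{F}_{21}\mathcal{R}\mathcal{F}^{-1}$. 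This is a direct computation: expand $m\cdot_\mathcal{F}a=(\mathcal{F}_1^{-1}\rhd m)\cdot(\mathcal{F}_2^{-1}\rhd a)$, apply the braided symmetry of $\mathcal{M}$ to swap the two factors picking up $\mathcal{R}^{-1}$, then reassemble using the $H$-module algebra/equivariance axioms and the definition of $\cdot_\mathcal{F}$; the bookkeeping of the $\mathcal{F}^{\pm1}$ and $\mathcal{R}^{-1}$ legs is exactly the same type of manipulation as in Lemma~\ref{lemma01} and in the braided commutativity of $\mathcal{A}_\mathcal{F}$, which one can cite or reproduce. Since $\mathrm{Drin}_\mathcal{F}$ is the identity on morphisms and the morphism classes in ${}_\mathcal{A}^H\mathcal{M}_\mathcal{A}^\mathcal{R}$ and ${}_\mathcal{A}^H\mathcal{M}_\mathcal{A}$ agree (braided symmetry is a property, not extra structure), full faithfulness and essential surjectivity are inherited from the unrestricted case; its quasi-inverse is $\mathrm{Drin}_{\mathcal{F}^{-1}}$ regarded as a functor out of the twisted category.

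Next I would verify that the monoidal structure already constructed on $({}_\mathcal{A}^H\mathcal{M}_\mathcal{A},\otimes_\mathcal{A})$ restricts: one checks that $\mathcal{M}\otimes_\mathcal{A}\mathcal{M}'$ is again braided symmetric when $\mathcal{M},\mathcal{M}'$ are, which follows by a short computation on elementary tensors using that the right $\mathcal{A}$-action on $\mathcal{M}\otimes_\mathcal{A}\mathcal{M}'$ is $(m\otimes_\mathcal{A}m')\cdot a=m\otimes_\mathcal{A}(m'\cdot a)$ together with the braided symmetry of $\mathcal{M}'$ and of $\mathcal{A}$. The braiding $c^\mathcal{R}_{\mathcal{M},\mathcal{M}'}(m\otimes_\mathcal{A}m')=\mathcal{R}^{-1}\cdot(m'\otimes_\mathcal{A}m)$ is well-defined on the quotient over $\mathcal{A}$ precisely because $\mathcal{M},\mathcal{M}'$ are braided symmetric (this is the content cited from \cite{Schenkel2015}~Theorem~5.21), and it is $H$-equivariant and $\mathcal{A}$-bilinear, hence a morphism in the restricted category; the hexagon and symmetry axioms are inherited from $({}_H\mathcal{M},\otimes,c^\mathcal{R})$ since they hold already at the level of the underlying $\Bbbk$-modules.

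Finally I would show the restricted equivalence is braided, i.e. that under the monoidal natural isomorphism $\varphi$ (induced by $\varphi_{\mathcal{M},\mathcal{M}'}(m\otimes_{\mathcal{A}_\mathcal{F}}m')=(\mathcal{F}_1^{-1}\rhd m)\otimes_\mathcal{A}(\mathcal{F}_2^{-1}\rhd m')$, which descends to the tensor product over the algebra since $\varphi$ is built from the $H$-action and the $\mathcal{F}$ are normalized) the diagram relating $c^{\mathcal{R}_\mathcal{F}}$ on the twisted side and $c^\mathcal{R}$ on the untwisted side commutes. Concretely this reduces to the identity $\varphi_{\mathcal{M}',\mathcal{M}}\circ c^{\mathcal{R}_\mathcal{F}}_{\mathcal{M},\mathcal{M}'}=c^\mathcal{R}_{\mathcal{M},\mathcal{M}'}\circ\varphi_{\mathcal{M},\mathcal{M}'}$, which after expanding both sides on $m\otimes_{\mathcal{A}_\mathcal{F}}m'$ becomes the algebraic relation $\mathcal{F}^{-1}_{21}\,\mathcal{R}_\mathcal{F}^{-1}=\mathcal{R}^{-1}\mathcal{F}^{-1}$ in $H\otimes H$ acting on $m'\otimes m$ — but this is just a rearrangement of the definition $\mathcal{R}_\mathcal{F}=\mathcal{F}_{21}\mathcal{R}\mathcal{F}^{-1}$. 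This is essentially the same computation that proves Theorem~\ref{thm01} is braided when $H$ is triangular, now carried out one level up on $\otimes_\mathcal{A}$; so one can either invoke Theorem~\ref{thm01} directly after checking the forgetful functor to $({}_H\mathcal{M},\otimes)$ is braided and reflects the braiding, or reproduce the one-line $\mathcal{R}$-matrix identity. The only genuine obstacle is the first step: making sure that the braided symmetry condition transforms correctly under $\mathrm{Drin}_\mathcal{F}$ with the twisted $\mathcal{R}$-matrix $\mathcal{R}_\mathcal{F}=\mathcal{F}_{21}\mathcal{R}\mathcal{F}^{-1}$, i.e. correctly tracking all six $\mathcal{F}^{\pm1}$ legs against the $\mathcal{R}^{-1}$; everything else is formal or already recorded in the cited theorems.
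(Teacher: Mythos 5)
Your proposal is correct and is essentially the standard argument: the paper itself gives no proof of this theorem but delegates it to \cite{Schenkel2015}, Theorem~5.21, and your outline reproduces the steps of that proof (restriction of the Drinfel'd functor to the braided symmetric subcategories, descent of $c^{\mathcal{R}}$ to $\otimes_{\mathcal{A}}$, and the intertwining of the braidings by $\varphi$). One bookkeeping slip: since $\varphi_{\mathcal{M}',\mathcal{M}}$ acts by $\mathcal{F}^{-1}$ (not $\mathcal{F}^{-1}_{21}$) on $\mathcal{M}'\otimes\mathcal{M}$, the identity you actually need is $\mathcal{F}^{-1}\mathcal{R}_\mathcal{F}^{-1}=\mathcal{R}^{-1}\mathcal{F}_{21}^{-1}$ rather than $\mathcal{F}_{21}^{-1}\mathcal{R}_\mathcal{F}^{-1}=\mathcal{R}^{-1}\mathcal{F}^{-1}$; the former is immediate from $\mathcal{R}_\mathcal{F}=\mathcal{F}_{21}\mathcal{R}\mathcal{F}^{-1}$ and is also exactly the identity that makes your first step (preservation of the braided symmetry condition under $\mathrm{Drin}_\mathcal{F}$) go through, so the argument is unaffected.
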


\section{Braided Commutative Geometry}\label{section3}

We enter the main section of these notes with the aim to construct a
noncommutative Cartan calculus for any braided commutative algebra. Since its
development is entirely parallel to the classical Cartan calculus on a
commutative algebra, with basically no choices on the way, it feels justified
to call it \textit{the} braided Cartan calculus on a fixed braided commutative
algebra. Before proving this result we recall the notion of multivector fields
and differential forms on a commutative algebra, also to indicate the naturality
of the generalization. The corresponding Graßmann and Gerstenhaber structures
are equivariant with respect to a cocommutative Hopf algebra if the commutative
algebra is a Hopf algebra module algebra in addition. More in general we give 
the definitions of braided Graßmann and Gerstenhaber algebra and provide braided
multivector fields and differential forms on a braided commutative algebra as 
examples. In the second subsection we introduce a differential on
braided differential forms via a braided version of the Chevalley-Eilenberg
formula. Remark that the differential is a graded braided derivation with respect to
the braided wedge product, however, since it is equivariant, it resembles a graded
(non-braided) derivation. Using graded braided
commutators the relations between the braided Lie derivative, insertion 
and differential are generalizing and entirely mirror the commutation
relations of the classical Cartan calculus. We end the second subsection by
applying the gauge equivalence given by the Drinfel'd functor to the braided
Cartan calculus and proving that the result is isomorphic to the braided Cartan
calculus on the twisted algebra with respect to the twisted triangular structure.
Some ramifications of this gauge equivalence, in particular for
the interpretation of the twisted Cartan calculus on a commutative algebra, are
discussed. As an application of the braided Cartan calculus
the third and last subsection deals with equivariant covariant 
derivatives and metrics.
The main results are the extension of an equivariant covariant derivative to braided
multivector fields and differential forms and the existence of a unique equivariant
Levi-Civita covariant derivative for a fixed non-degenerate equivariant metric.
We prove that the Drinfel'd functor is compatible with all constructions.

\subsection{Braided Graßmann and Gerstenhaber Algebras}\label{Sec3.1}

For the Cartan calculus on a commutative algebra $\mathcal{A}$
the two most important $\mathcal{A}$-bimodules are the
multivector fields $\mathfrak{X}^\bullet(\mathcal{A})$ and differential forms
$\Omega^\bullet(\mathcal{A})$. They are graded and possess a 
Graßmann structure. If $\mathcal{A}$ is a left $H$-module algebra
for a cocommutative Hopf algebra $H$, $\mathfrak{X}^\bullet(\mathcal{A})$
and $\Omega^\bullet(\mathcal{A})$ are $H$-equivariant symmetric
$\mathcal{A}$-bimodules
and the module actions respect the grading. Let us briefly recall the construction
of those modules and then generalize them to the category
${}_\mathcal{A}^H\mathcal{M}_\mathcal{A}^\mathcal{R}$ for a triangular Hopf
algebra $(H,\mathcal{R})$ and a braided commutative left $H$-module
algebra $\mathcal{A}$.

Fix a cocommutative Hopf algebra $H$ and a commutative left $H$-module
algebra $\mathcal{A}$ for the moment. The derivations
$\mathrm{Der}(\mathcal{A})$ of $\mathcal{A}$ are an $H$-equivariant symmetric
$\mathcal{A}$-bimodule with left $H$-action given by the \textit{adjoint action}
\begin{equation}
    (\xi\rhd X)(a)
    =\xi_{(1)}\rhd(X(S(\xi_{(2)})\rhd a))
\end{equation}
and left and right $\mathcal{A}$-module actions
$(a\cdot X)(b)=a\cdot X(b)=(X\cdot a)(b)$,
for all $\xi\in H$, $X\in\mathrm{Der}(\mathcal{A})$ and $a\in\mathcal{A}$.
In particular, the tensor algebra 
$$
\mathrm{T}^\bullet\mathrm{Der}(\mathcal{A})
=\mathcal{A}\oplus\mathrm{Der}(\mathcal{A})
\oplus(\mathrm{Der}(\mathcal{A})\otimes_\mathcal{A}\mathrm{Der}(\mathcal{A}))
\oplus\cdots
$$
of $\mathrm{Der}(\mathcal{A})$ with respect to the tensor product
$\otimes_\mathcal{A}$ over $\mathcal{A}$ is well-defined. It is an
$H$-equivariant symmetric $\mathcal{A}$-bimodule with module actions
defined on factorizing elements $X_1\otimes_\mathcal{A}\cdots
\otimes_\mathcal{A}X_k\in\mathrm{T}^k\mathrm{Der}(\mathcal{A})$ by
\begin{equation}
\begin{split}
    \xi\rhd(X_1\otimes_\mathcal{A}\cdots\otimes_\mathcal{A}X_k)
    =&(\xi_{(1)}\rhd X_1)\otimes_\mathcal{A}\cdots
    \otimes_\mathcal{A}(\xi_{(k)}\rhd X_k),\\
    a\cdot(X_1\otimes_\mathcal{A}\cdots\otimes_\mathcal{A}X_k)
    =&(a\cdot X_1)\otimes_\mathcal{A}\cdots\otimes_\mathcal{A}X_k,\\
    (X_1\otimes_\mathcal{A}\cdots\otimes_\mathcal{A}X_k)\cdot a
    =&X_1\otimes_\mathcal{A}\cdots\otimes_\mathcal{A}(X_k\cdot a)
\end{split}
\end{equation}
for all $\xi\in H$ and $a\in\mathcal{A}$. Furthermore, there is an ideal
$I$ in $\mathrm{T}^\bullet\mathrm{Der}(\mathcal{A})$ generated by elements
$X_1\otimes_\mathcal{A}\cdots\otimes_\mathcal{A}X_k\in
\mathrm{T}^k\mathrm{Der}(\mathcal{A})$ such that $X_i=X_j$ for a pair
$(i,j)$ such that $1\leq i<j\leq k$. The quotient
$\mathrm{T}^\bullet\mathrm{Der}(\mathcal{A})/I$ is the exterior algebra.
It is the Graßmann algebra $\mathfrak{X}^\bullet(\mathcal{A})$
of multivector fields on $\mathcal{A}$ and the induced product,
the wedge product, is denoted by $\wedge$.
Since $H$ is cocommutative and the $\mathcal{A}$-actions symmetric,
they respect the ideal $I$. Consequently, the induced actions on 
$\mathfrak{X}^\bullet(\mathcal{A})$ are well-defined, structuring the
multivector fields as an $H$-equivariant symmetric $\mathcal{A}$-bimodule
with the additional property that the module actions respect the grading.
Moreover, the usual
commutator of endomorphisms $[\cdot,\cdot]$ is a Lie bracket for the derivations
of $\mathcal{A}$. It extends uniquely to a Gerstenhaber bracket
$\llbracket\cdot,\cdot\rrbracket$ on $\mathfrak{X}^\bullet(\mathcal{A})$
by defining $\llbracket a,b\rrbracket=0$,
$\llbracket X,a\rrbracket=X(a)$ for all
$a,b\in\mathcal{A}$, $X\in\mathrm{Der}(\mathcal{A})$ and inductively declaring
the \textit{graded Leibniz rule}
\begin{equation}\label{eq01}
    \llbracket X,Y\wedge Z\rrbracket
    =\llbracket X,Y\rrbracket\wedge Z
    +(-1)^{(k-1)\ell}Y\wedge\llbracket X,Z\rrbracket
\end{equation}
for all $X\in\mathfrak{X}^k(\mathcal{A})$, $Y\in\mathfrak{X}^\ell(\mathcal{A})$
and $Z\in\mathfrak{X}^\bullet(\mathcal{A})$. In detail this means that
$\llbracket\cdot,\cdot\rrbracket\colon
\mathfrak{X}^k(\mathcal{A})\times\mathfrak{X}^\ell(\mathcal{A})
\rightarrow\mathfrak{X}^{k+\ell-1}(\mathcal{A})$ is a graded (with respect to
the degree shifted by $-1$) Lie bracket, i.e. it is \textit{graded skew-symmetric}
\begin{equation}
    \llbracket Y,X\rrbracket
    =-(-1)^{(k-1)(\ell-1)}\llbracket X,Y\rrbracket
\end{equation}
and satisfies the \textit{graded Jacobi identity}
\begin{equation}
    \llbracket X,\llbracket Y,Z\rrbracket\rrbracket
    =\llbracket\llbracket X,Y\rrbracket,Z\rrbracket
    +(-1)^{(k-1)(\ell-1)}\llbracket Y,\llbracket X,Z\rrbracket\rrbracket,
\end{equation}
where $X\in\mathfrak{X}^k(\mathcal{A})$, $Y\in\mathfrak{X}^\ell(\mathcal{A})$
and $Z\in\mathfrak{X}^\bullet(\mathcal{A})$,
such that the graded Leibniz rule (\ref{eq01}) holds in addition. 
Using the formula
\begin{equation}\label{eq04}
\begin{split}
    \llbracket X_1\wedge\cdots\wedge X_k,
    Y_1\wedge\cdots\wedge Y_\ell\rrbracket
    =&\sum_{i=1}^k\sum_{j=1}^\ell(-1)^{i+j}[X_i,Y_j]\wedge
    X_1\wedge\cdots\wedge\widehat{X_i}\wedge\cdots\wedge X_k\\
    &\wedge Y_1\wedge\cdots\wedge\widehat{Y_j}\wedge\cdots\wedge Y_\ell,
\end{split}
\end{equation}
which holds for all $X_1,\ldots,X_k,Y_1,\ldots,Y_\ell\in\mathfrak{X}^1(\mathcal{A})$,
it is easy to prove that the Gerstenhaber bracket $\llbracket\cdot,\cdot\rrbracket$ is
$H$-equivariant, i.e. that
\begin{equation}
    \xi\rhd\llbracket X,Y\rrbracket
    =\llbracket\xi_{(1)}\rhd X,\xi_{(2)}\rhd Y\rrbracket
\end{equation}
for all $\xi\in H$ and $X,Y\in\mathfrak{X}^\bullet(\mathcal{A})$.
Note that $\widehat{X_i}$ and $\widehat{Y_j}$ means that
$X_i$ and $Y_j$ are left out in the wedge product of eq.(\ref{eq04}).

Differential forms on $\mathcal{A}$ are defined in the following way:
consider $\mathrm{Hom}_\mathcal{A}(\mathrm{Der}(\mathcal{A})
,\mathcal{A})$, the $\Bbbk$-module of $\Bbbk$-linear and $\mathcal{A}$-linear maps
$\mathrm{Der}(\mathcal{A})\rightarrow\mathcal{A}$. It is an $H$-equivariant
symmetric $\mathcal{A}$-bimodule with respect to the adjoint $H$-action and
$(a\cdot\omega)(X)=a\cdot\omega(X)=(\omega\cdot a)(X)$
for all $a\in\mathcal{A}$,
$\omega\in\mathrm{Hom}_\mathcal{A}(\mathrm{Der}(\mathcal{A}),\mathcal{A})$
and $X\in\mathrm{Der}(\mathcal{A})$. The corresponding exterior algebra
is denoted by $\underline{\Omega}^\bullet(\mathcal{A})$.
One can define
a differential $\mathrm{d}$ of $\omega\in\underline{\Omega}^k(\mathcal{A})$ 
via
\begin{equation}
\begin{split}
    (\mathrm{d}\omega)&(X_1,\ldots,X_{k+1})
    =\sum_{i=1}^{k+1}(-1)^{i+1}
    X_i(\omega(X_1,\ldots,\widehat{X_i},\ldots,X_{k+1}))\\
    &+\sum_{i<j}(-1)^{i+j}
    \omega([X_i,X_j],X_1,\ldots,\widehat{X_i},\ldots,
    \widehat{X_j},\ldots,X_{k+1})
\end{split}
\end{equation}
for all $X_1,\ldots,X_{k+1}\in\mathrm{Der}(\mathcal{A})$.
This is known as the \textit{Chevalley-Eilenberg formula}.
Define now the differential forms
$\Omega^\bullet(\mathcal{A})$ on $\mathcal{A}$ to be the smallest
differential graded subalgebra of $\underline{\Omega}^\bullet(\mathcal{A})$
such that $\mathcal{A}\subseteq\Omega^\bullet(\mathcal{A})$ (compare to
\cite{D-VM96,D-VM94}).
In this case every element of $\Omega^k(\mathcal{A})$ can be written
as a finite sum of elements of the form 
$a_0\mathrm{d}a_1\wedge\ldots\wedge\mathrm{d}a_k$, where
$a_0,\ldots,a_k\in\mathcal{A}$.
The induced actions structure $(\Omega^\bullet(\mathcal{A}),\wedge)$
as an $H$-equivariant symmetric $\mathcal{A}$-bimodule and a Graßmann
algebra such that $\wedge$ is equivariant and $H\rhd\Omega^k(\mathcal{A})
\subseteq\Omega^k(\mathcal{A})$.
From the Chevalley-Eilenberg formula it follows that
$\mathrm{d}$ commutes with $\rhd$.
The insertion $\mathrm{i}\colon\mathfrak{X}^1(\mathcal{A})\otimes
\Omega^k(\mathcal{A})\rightarrow\Omega^{k-1}(\mathcal{A})$
of derivations $X\in\mathfrak{X}^1(\mathcal{A})$ into the first slot of a
differential form, i.e. $(\mathrm{i}_X(\omega))(X_1,\ldots,X_{k-1})
=\omega(X,X_1,\ldots,X_{k-1})$ for all
$\omega\in\Omega^k(\mathcal{A})$ and
$X_1,\ldots,X_{k-1}\in\mathfrak{X}^1(\mathcal{A})$ is $H$-equivariant.

We are ready to generalize the concepts of Graßmann and Gerstenhaber algebra
to the setting of equivariant braided symmetric bimodules. This is exemplified
by the example of braided multivector fields.
Fix a triangular Hopf algebra $(H,\mathcal{R})$ and a braided commutative
left $H$-module algebra $\mathcal{A}$.
A $\Bbbk$-linear endomorphism $X$ of $\mathcal{A}$
is said to be a \textit{braided derivation} if
\begin{equation}
    X(ab)=X(a)b+(\mathcal{R}_1^{-1}\rhd a)((\mathcal{R}_2^{-1}\rhd X)(b))
\end{equation}
for all $a,b\in\mathcal{A}$, where the left $H$-action on 
endomorphisms is given by the adjoint action.
\begin{lemma}
The braided derivations $\mathrm{Der}_\mathcal{R}(\mathcal{A})$ are an
$H$-equivariant braided symmetric $\mathcal{A}$-bimodule. Furthermore,
the braided commutator
\begin{equation}
    [X,Y]_\mathcal{R}=XY-(\mathcal{R}_1^{-1}\rhd Y)(\mathcal{R}_2^{-1}\rhd X),
\end{equation}
where $X,Y\in\mathrm{Der}_\mathcal{R}(\mathcal{A})$, structures
$\mathrm{Der}_\mathcal{R}(\mathcal{A})$ as a braided Lie algebra.
The latter means that $[\cdot,\cdot]_\mathcal{R}$ is braided
skew-symmetric, i.e.
$[Y,X]_\mathcal{R}
=-[\mathcal{R}_1^{-1}\rhd X,\mathcal{R}_2^{-1}\rhd Y]_\mathcal{R}$
and satisfies the braided Jacobi identity, i.e.
\begin{equation}
    [X,[Y,Z]_\mathcal{R}]_\mathcal{R}
    =[[X,Y]_\mathcal{R},Z]_\mathcal{R}
    +[\mathcal{R}_1^{-1}\rhd Y,
    [\mathcal{R}_2^{-1}\rhd X,Z]_\mathcal{R}]_\mathcal{R}
\end{equation}
for all $X,Y,Z\in\mathrm{Der}_\mathcal{R}(\mathcal{A})$.
\end{lemma}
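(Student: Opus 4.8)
The plan is to break the statement into four claims and prove them in order: (i) $\mathrm{Der}_\mathcal{R}(\mathcal{A})$ is closed under the left and right $\mathcal{A}$-actions and the adjoint $H$-action, so that it is an object of ${}_\mathcal{A}^H\mathcal{M}_\mathcal{A}^\mathcal{R}$; (ii) the braided commutator $[\cdot,\cdot]_\mathcal{R}$ takes values in $\mathrm{Der}_\mathcal{R}(\mathcal{A})$; (iii) braided skew-symmetry; (iv) the braided Jacobi identity. Throughout I will use the triangularity of $\mathcal{R}$ (so $\mathcal{R}_{21}=\mathcal{R}^{-1}$, the hexagon relations, and the quantum Yang--Baxter equation), together with the braided commutativity of $\mathcal{A}$ and the fact that the adjoint $H$-action on $\mathrm{End}_\Bbbk(\mathcal{A})$ is a module action with $(\xi\rhd X)(a)=\xi_{(1)}\rhd(X(S(\xi_{(2)})\rhd a))$.

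First I would verify (i). That $X\cdot a$ and $a\cdot X$ again satisfy the braided Leibniz rule is a direct computation expanding $X(a)$ using braided commutativity to move the scalar $a$ past the $\mathcal{R}$-legs; the two actions are related by $\mathcal{R}$ exactly as required for a braided symmetric bimodule, precisely because $\mathcal{A}$ is braided commutative and the bimodule relation is inherited slot-by-slot. Equivariance — that $\xi\rhd X$ is again a braided derivation and that $\xi\rhd(a\cdot X)=(\xi_{(1)}\rhd a)\cdot(\xi_{(2)}\rhd X)$ — follows from the Hopf algebra axioms applied to the adjoint action formula, using $\Delta_{21}(\xi)=\mathcal{R}\Delta(\xi)\mathcal{R}^{-1}$ to commute the $\xi$-legs past the $\mathcal{R}$-legs appearing in the braided Leibniz rule. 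For (ii), one computes $[X,Y]_\mathcal{R}(ab)$ by substituting the braided Leibniz rule for $X$ and for $Y$ twice; the ``bad'' second-order terms cancel after using the hexagon identity for $\mathcal{R}$ to reorganize the nested $\mathcal{R}$-legs and using that $Y$ (resp.\ the $\mathcal{R}$-translate of $X$) is itself a braided derivation, so that $[X,Y]_\mathcal{R}$ obeys the braided Leibniz rule with the correct $\mathcal{R}$-weighting. This is the standard ``braided commutator of braided derivations is a braided derivation'' computation and is where triangularity (as opposed to mere quasitriangularity) is used to make the two second-order contributions match.

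Claim (iii) is the most formal: from the definition $[X,Y]_\mathcal{R}=XY-(\mathcal{R}_1^{-1}\rhd Y)(\mathcal{R}_2^{-1}\rhd X)$ one substitutes $\mathcal{R}^{-1}=\mathcal{R}_{21}$ and applies the action axiom together with $\mathcal{R}_{21}\mathcal{R}=1\otimes 1$ to recognize $-[\mathcal{R}_1^{-1}\rhd X,\mathcal{R}_2^{-1}\rhd Y]_\mathcal{R}$ on the nose; this is bookkeeping with the $\mathcal{R}$-legs and no geometric input. Claim (iv), the braided Jacobi identity, is the main obstacle: expanding all three braided commutators yields twelve terms, and one must show that after applying the quantum Yang--Baxter equation (to permute triple tensor legs of $\mathcal{R}$ consistently) and the equivariance of the $\mathcal{A}$-module and of composition under $\rhd$, the terms cancel in pairs exactly as in the classical Jacobi identity but with each transposition replaced by its $\mathcal{R}$-braiding. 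The cleanest route is to observe that $(\mathrm{Der}_\mathcal{R}(\mathcal{A}),[\cdot,\cdot]_\mathcal{R})$ is the braided Lie algebra of ``braided primitive'' elements inside the braided-commutator algebra $(\mathrm{End}_\Bbbk(\mathcal{A}),[\cdot,\cdot]_\mathcal{R})$, and that on \emph{any} object $V$ of a symmetric braided monoidal category the braided commutator on an algebra object satisfies braided skew-symmetry and the braided Jacobi identity — a general categorical fact whose proof is a string-diagram manipulation using only the hexagon axioms and $c_{V,V}\circ c_{V,V}=\mathrm{id}$. Invoking that, claims (iii) and (iv) reduce to checking that $\mathrm{End}_\Bbbk(\mathcal{A})$ with composition is an algebra object in $({}_H\mathcal{M},\otimes,c^\mathcal{R})$ (true because $\rhd$ on endomorphisms is the adjoint action and $c^\mathcal{R}$ is built from $\mathcal{R}$) and that $\mathrm{Der}_\mathcal{R}(\mathcal{A})$ is a braided Lie subalgebra, i.e.\ closed under $[\cdot,\cdot]_\mathcal{R}$, which is exactly (ii). I would present the computational proof of (ii) and (iv) in leg notation but flag the categorical interpretation as the conceptual reason the identities hold.
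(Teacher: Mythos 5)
Your proposal is correct, and in fact it supplies considerably more than the paper does: the paper dispatches this lemma with the single remark that it ``is an elementary consequence of the properties of the triangular structure'' and leaves all details to the reader (deferring to the author's thesis for the analogous computations elsewhere). Your four-part decomposition is exactly the standard way to fill this in, and the key points are all sound: skew-symmetry reduces to $\mathcal{R}_{21}\mathcal{R}=1\otimes 1$, closure of the bracket is the leg-notation cancellation via the hexagon relations, and the Jacobi identity follows most cleanly from your categorical observation that the braided commutator on an algebra object in a \emph{symmetric} braided monoidal category always yields a braided Lie algebra, with $\mathrm{Der}_\mathcal{R}(\mathcal{A})$ sitting inside $(\mathrm{End}_\Bbbk(\mathcal{A}),[\cdot,\cdot]_\mathcal{R})$ as a braided Lie subalgebra once (ii) is established.
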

This is an elementary consequence of the properties of the triangular structure.
In a next step we want to generalize the construction of multivector fields
of a commutative algebra (compare also to \cite{BZ2008}).
Since $\mathrm{Der}_\mathcal{R}(\mathcal{A})$ is an $\mathcal{A}$-bimodule
we can build the tensor algebra $\mathrm{T}^\bullet
\mathrm{Der}_\mathcal{R}(\mathcal{A})$ with respect to $\otimes_\mathcal{A}$
and with module actions on factorizing elements
$X_1\otimes_\mathcal{A}\cdots\otimes_\mathcal{A}X_k\in
\mathrm{T}^k\mathrm{Der}_\mathcal{R}(\mathcal{A})$ defined by
\begin{equation}\label{eq05}
\begin{split}
    \xi\rhd(X_1\otimes_\mathcal{A}\cdots\otimes_\mathcal{A}X_k)
    =&(\xi_{(1)}\rhd X_1)\otimes_\mathcal{A}\cdots
    \otimes_\mathcal{A}(\xi_{(k)}\rhd X_k),\\
    a\cdot(X_1\otimes_\mathcal{A}\cdots\otimes_\mathcal{A}X_k)
    =&(a\cdot X_1)\otimes_\mathcal{A}\cdots\otimes_\mathcal{A}X_k,\\
    (X_1\otimes_\mathcal{A}\cdots\otimes_\mathcal{A}X_k)\cdot a
    =&X_1\otimes_\mathcal{A}\cdots\otimes_\mathcal{A}(X_k\cdot a)
\end{split}
\end{equation}
for all $\xi\in H$ and $a\in\mathcal{A}$.
There is an ideal $I$ in 
$\mathrm{T}^\bullet\mathrm{Der}_\mathcal{R}(\mathcal{A})$
generated by elements
$X_1\otimes_\mathcal{A}\cdots\otimes_\mathcal{A}X_k\in
\mathrm{T}^k\mathrm{Der}_\mathcal{R}(\mathcal{A})$ which equal
\begin{equation}
\begin{split}
    X_1\otimes_\mathcal{A}&\cdots\otimes_\mathcal{A}X_{i-1}
    \otimes_\mathcal{A}\bigg(\mathcal{R}_1^{'-1}\rhd\bigg(
    (\mathcal{R}_1^{-1}\rhd X_j)\otimes_\mathcal{A}
    (\mathcal{R}_2^{-1}\rhd(X_{i+1}\otimes_\mathcal{A}
    \cdots\otimes_\mathcal{A}X_{j-1}))\bigg)\bigg)\\
    &\otimes_\mathcal{A}(\mathcal{R}_2^{'-1}\rhd X_i)
    \otimes_\mathcal{A}X_{j+1}\otimes_\mathcal{A}\cdots
    \otimes_\mathcal{A}X_k
\end{split}
\end{equation}
for a pair $(i,j)$ such that $1\leq i<j\leq k$. One can prove that
the module actions (\ref{eq05}) respect $I$ (see \cite{ThomasPhDThesis}).
This induces an $H$-equivariant graded
associative braided commutative product $\wedge_\mathcal{R}$ on the
quotient, declaring the \textit{braided multivector fields}
$(\mathfrak{X}^\bullet_\mathcal{R}(\mathcal{A}),\wedge_\mathcal{R})$
on $\mathcal{A}$. In general, the associative unital graded algebra and
$H$-equivariant braided symmetric $\mathcal{A}$-bimodule
$(\Lambda^\bullet\mathcal{M},\wedge_\mathcal{R})$
associated to an $H$-equivariant braided symmetric $\mathcal{A}$-bimodule
$\mathcal{M}$ in this way is said to be the \textit{braided Graßmann algebra}
or \textit{braided exterior algebra} corresponding to $\mathcal{M}$.
Coming back to the example of braided multivector fields we can use
the braided commutator of vector fields to obtain additional structure
on the braided Graßmann algebra. Namely,
we are defining a $\Bbbk$-bilinear operation
$\llbracket\cdot,\cdot\rrbracket_\mathcal{R}\colon
\mathfrak{X}^k_\mathcal{R}(\mathcal{A})\times
\mathfrak{X}^\ell_\mathcal{R}(\mathcal{A})
\rightarrow\mathfrak{X}^{k+\ell-1}_\mathcal{R}(\mathcal{A})$
in the following way. If $a,b\in\mathcal{A}$ we set 
$\llbracket a,b\rrbracket_\mathcal{R}=0$. For $a\in\mathcal{A}$ and a
factorizing element
$X=X_1\wedge_\mathcal{R}\cdots\wedge_\mathcal{R}X_k
\in\mathfrak{X}^k_\mathcal{R}(\mathcal{A})$ where $k>0$ we define
\begin{equation}
\begin{split}
    \llbracket X,a\rrbracket_\mathcal{R}
    =&\sum_{i=1}^k(-1)^{k-i}
    X_1\wedge_\mathcal{R}\cdots\wedge_\mathcal{R}X_{i-1}
    \wedge_\mathcal{R}(X_i(\mathcal{R}_1^{-1}\rhd a))\\
    &\wedge_\mathcal{R}\bigg(\mathcal{R}_2^{-1}\rhd\bigg(
    X_{i+1}\wedge_\mathcal{R}\cdots\wedge_\mathcal{R}X_k
    \bigg)\bigg)
\end{split}
\end{equation}
and
\begin{equation}
\begin{split}
    \llbracket a,X\rrbracket_\mathcal{R}
    =&\sum_{i=1}^k(-1)^i\bigg(\mathcal{R}_{1(1)}^{-1}\rhd\bigg(
    X_1\wedge_\mathcal{R}\cdots\wedge_\mathcal{R}X_{i-1}\bigg)\bigg)
    \wedge_\mathcal{R}((\mathcal{R}_{1(2)}^{-1}\rhd X_i)
    (\mathcal{R}_2^{-1}\rhd a))\\
    &\wedge_\mathcal{R}
    X_{i+1}\wedge_\mathcal{R}\cdots\wedge_\mathcal{R}X_k.
\end{split}
\end{equation}
Furthermore, on factorizing elements
$X=X_1\wedge_\mathcal{R}\cdots\wedge_\mathcal{R}X_k
\in\mathfrak{X}^k_\mathcal{R}(\mathcal{A})$ and
$Y=Y_1\wedge_\mathcal{R}\cdots\wedge_\mathcal{R}Y_\ell
\in\mathfrak{X}^\ell_\mathcal{R}(\mathcal{A})$, where
$k,\ell>0$, we define
\begin{equation}
\begin{split}
    \llbracket X&,Y\rrbracket_\mathcal{R}
    =\sum_{i=1}^k\sum_{j=1}^\ell(-1)^{i+j}
    [\mathcal{R}_1^{-1}\rhd X_i,\mathcal{R}_1^{'-1}\rhd Y_j]_\mathcal{R}\\
    &\wedge_\mathcal{R}
    \bigg(\mathcal{R}_2^{'-1}\rhd\bigg(
    \bigg(\mathcal{R}_2^{-1}\rhd(X_1\wedge_\mathcal{R}\cdots
    \wedge_\mathcal{R}X_{i-1})\bigg)
    \wedge_\mathcal{R}\widehat{X_i}\wedge_\mathcal{R}X_{i+1}
    \wedge_\mathcal{R}\cdots\wedge_\mathcal{R}X_k\\
    &\wedge_\mathcal{R}Y_1\wedge_\mathcal{R}\cdots\wedge_\mathcal{R}Y_{j-1}\bigg)\bigg)
    \wedge_\mathcal{R}\widehat{Y_j}\wedge_\mathcal{R}Y_{j+1}
    \wedge_\mathcal{R}\cdots\wedge_\mathcal{R}Y_\ell,
\end{split}
\end{equation}
where $[\cdot,\cdot]_\mathcal{R}$ denotes the braided commutator
and $\widehat{X_i}$ and $\widehat{Y_j}$ means that $X_i$ and
$Y_j$ are omitted in above product.
The operation $\llbracket\cdot,\cdot\rrbracket_\mathcal{R}$
is said to be the \textit{braided Schouten-Nijenhuis bracket}.
\begin{proposition}
The braided multivector fields $(\mathfrak{X}^\bullet_\mathcal{R}(\mathcal{A}),
\wedge_\mathcal{R},\llbracket\cdot,\cdot\rrbracket_\mathcal{R})$ on $\mathcal{A}$
are an associative unital graded algebra and an $H$-equivariant braided symmetric
$\mathcal{A}$-bimodule equipped with an $H$-equivariant graded (with degree
shifted by $-1$) braided Lie bracket
$\llbracket\cdot,\cdot\rrbracket_\mathcal{R}\colon
\mathfrak{X}^k_\mathcal{R}(\mathcal{A})
\otimes\mathfrak{X}^\ell_\mathcal{R}(\mathcal{A})
\rightarrow\mathfrak{X}^{k+\ell-1}_\mathcal{R}(\mathcal{A})$, which means that
$\llbracket\cdot,\cdot\rrbracket_\mathcal{R}$ is graded braided skewsymmetric, i.e.
\begin{equation}
    \llbracket Y,X\rrbracket_\mathcal{R}
    =-(-1)^{(k-1)\cdot(\ell-1)}\llbracket\mathcal{R}_1^{-1}\rhd X,
    \mathcal{R}_2^{-1}\rhd Y\rrbracket_\mathcal{R},
\end{equation}
and satisfies the graded braided Jacobi identity
\begin{equation}
    \llbracket X,\llbracket Y,Z\rrbracket_\mathcal{R}\rrbracket_\mathcal{R}
    =\llbracket\llbracket X,Y\rrbracket_\mathcal{R},Z\rrbracket_\mathcal{R}
    +(-1)^{(k-1)\cdot(\ell-1)}\llbracket\mathcal{R}_1^{-1}\rhd Y,
    \llbracket\mathcal{R}_2^{-1}\rhd X,Z\rrbracket_\mathcal{R}
    \rrbracket_\mathcal{R},
\end{equation}
such that the graded braided Leibniz rule
\begin{equation}
    \llbracket X,Y\wedge_\mathcal{R}Z\rrbracket_\mathcal{R}
    =\llbracket X,Y\rrbracket_\mathcal{R}\wedge_\mathcal{R}Z
    +(-1)^{(k-1)\cdot\ell}(\mathcal{R}_1^{-1}\rhd Y)\wedge_\mathcal{R}
    \llbracket\mathcal{R}_2^{-1}\rhd X,Z\rrbracket_\mathcal{R}
\end{equation}
holds in addition,
where $X\in\mathfrak{X}^k_\mathcal{R}(\mathcal{A})$,
$Y\in\mathfrak{X}^\ell_\mathcal{R}(\mathcal{A})$
and
$Z\in\mathfrak{X}^\bullet_\mathcal{R}(\mathcal{A})$.
\end{proposition}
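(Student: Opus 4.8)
The plan is to establish the four assertions --- associative unital graded algebra and $H$-equivariant braided symmetric $\mathcal{A}$-bimodule, graded braided skew-symmetry, the graded braided Jacobi identity, and the graded braided Leibniz rule --- in this order, reducing each to the degree-one situation governed by the braided Lie algebra structure on $\mathrm{Der}_\mathcal{R}(\mathcal{A})$. The first point is already in hand: the quotient construction producing $(\mathfrak{X}^\bullet_\mathcal{R}(\mathcal{A}),\wedge_\mathcal{R})$ together with the compatibility of the actions (\ref{eq05}) with the ideal $I$ was recorded above, so only the bracket statements require work. For all three of these I would argue by induction on the total degree $k+\ell$, using the explicit formula for $\llbracket\cdot,\cdot\rrbracket_\mathcal{R}$ on factorizing elements as the base of the computation and the low-degree cases ($\llbracket a,b\rrbracket_\mathcal{R}=0$, $\llbracket X,a\rrbracket_\mathcal{R}=X(\mathcal{R}_1^{-1}\rhd a)$ read off from the degree-one formulas) as anchors.

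\textbf{Key steps.} First I would prove that $\llbracket\cdot,\cdot\rrbracket_\mathcal{R}$ is well-defined on the quotient, i.e.\ that the defining sum is independent of the chosen factorization and annihilates the ideal $I$; this is where the braided antisymmetrization encoded in $I$ and the placement of the $\mathcal{R}^{-1}$-legs in the formula must conspire correctly, and it is cleanest to phrase the whole construction inside the symmetric braided monoidal category $({}^H_\mathcal{A}\mathcal{M}^\mathcal{R}_\mathcal{A},\otimes_\mathcal{A},c^\mathcal{R})$ of Theorem~\ref{thm02}, so that ``moving an $\mathcal{R}$ past a wedge'' is literally an application of the braiding $c^\mathcal{R}$ and the graded signs are the categorical Koszul signs. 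Next, $H$-equivariance of the bracket follows from $H$-equivariance of the braided commutator $[\cdot,\cdot]_\mathcal{R}$ on $\mathrm{Der}_\mathcal{R}(\mathcal{A})$ (established in the preceding lemma) together with cocommutativity-up-to-$\mathcal{R}$ of $\Delta$: applying $\xi\rhd$ to a factorizing element distributes via the iterated coproduct over the factors and over the $\mathcal{R}$-legs, and one checks the two sides match term by term. Graded braided skew-symmetry is then the degree-$(k,\ell)$ bookkeeping identity built from the single relation $[Y,X]_\mathcal{R}=-[\mathcal{R}_1^{-1}\rhd X,\mathcal{R}_2^{-1}\rhd Y]_\mathcal{R}$: swapping the double sum $\sum_i\sum_j$ and re-threading the braidings past the $k-1$ and $\ell-1$ surviving wedge factors produces exactly the sign $-(-1)^{(k-1)(\ell-1)}$ and the $\mathcal{R}^{-1}$-action on $X$ and $Y$. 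For the graded braided Leibniz rule I would fix the degree of $X$ and induct on the degree of $Z$ (or directly on the number of wedge factors of $Y$ and $Z$), checking the base cases $Z\in\mathcal{A}$ and $Z\in\mathrm{Der}_\mathcal{R}(\mathcal{A})$ against the explicit formula and then propagating; the sign $(-1)^{(k-1)\ell}$ and the braiding of $X$ past $Y$ emerge from moving the index-$i$ contribution of $X$ across all $\ell$ factors of $Y$. Finally, the graded braided Jacobi identity I would derive from the Leibniz rule together with skew-symmetry by the standard argument: both sides are graded braided derivations (in the appropriate braided sense) in the last argument $Z$, they agree when $Z$ has degree $0$ or $1$ by the braided Jacobi identity for $[\cdot,\cdot]_\mathcal{R}$ on vector fields, hence they agree everywhere; alternatively one verifies it directly from (\ref{eq04})'s braided analogue by a symmetrization argument.

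\textbf{Main obstacle.} The genuinely delicate point is the well-definedness on the quotient together with the precise placement of the $\mathcal{R}^{-1}$-legs in the Schouten--Nijenhuis formula: in the classical case the Koszul signs take care of everything, but here each transposition of two adjacent tensor factors must be replaced by the braiding $c^\mathcal{R}$, so the formula is only consistent if the $\mathcal{R}$'s appearing when one ``pulls $X_i$ or $Y_j$ to the front'' are exactly the ones written, and if the ideal $I$ (whose generators already involve $\mathcal{R}$) is respected. I expect that carrying out the combinatorial induction directly on elements is error-prone, and that the right move is to systematically work in the braided monoidal category --- defining $\llbracket\cdot,\cdot\rrbracket_\mathcal{R}$ as a morphism built from $[\cdot,\cdot]_\mathcal{R}$, the braided wedge, and braidings, so that all three identities become instances of categorical coherence plus the braided Lie axioms on degree one --- and only afterward translate back to the component formulas stated above; I would point the reader to \cite{ThomasPhDThesis} for the full component-level verification.
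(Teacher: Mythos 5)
The paper states this proposition without a proof, deferring the component-level verification to \cite{ThomasPhDThesis}, so there is no argument in the text to compare yours against step by step. Your strategy is nonetheless the natural one and fits the framework the paper has set up: the algebra and bimodule assertions are already recorded in the text (the actions (\ref{eq05}) respect the ideal $I$), so the work is in the bracket identities, and reducing them to the braided Lie algebra structure on $\mathrm{Der}_\mathcal{R}(\mathcal{A})$ via the explicit factorized formula, with well-definedness on the quotient by $I$ as the crux, is exactly what a full proof must do. Your suggestion to organize the $\mathcal{R}$-leg bookkeeping inside the symmetric braided monoidal category of Theorem~\ref{thm02}, so that every transposition is literally an instance of $c^\mathcal{R}$ and the signs are Koszul signs, is the right way to keep the combinatorics under control and is consistent with how the paper uses that category elsewhere.

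Two points need repair before the outline is a proof. First, your argument for the graded braided Jacobi identity --- both sides are braided derivations in $Z$ and agree for $Z$ of degree $0$ or $1$ ``by the braided Jacobi identity for $[\cdot,\cdot]_\mathcal{R}$ on vector fields'' --- only settles the case where $X$ and $Y$ themselves have degree at most one; for general $X,Y$ the base case is not covered by the vector-field Jacobi identity. You must first use graded braided skew-symmetry to turn the Leibniz rule in the second slot into one in the first slot, and then run the induction over all three degrees (or, as you note in passing, verify the identity directly from the factorized formula by a symmetrization argument); this triple induction should be made explicit, since it is where most of the risk sits. Second, a small slip in your anchors: for $X\in\mathfrak{X}^1_\mathcal{R}(\mathcal{A})$ the correct value is $\llbracket X,a\rrbracket_\mathcal{R}=X(a)$ as in eq.~(\ref{eq09}); the expression $X(\mathcal{R}_1^{-1}\rhd a)$ you quote reduces to this only after the empty wedge factor is evaluated and the normalization $(\mathrm{id}\otimes\epsilon)(\mathcal{R}^{-1})=1$ collapses the legs, so stating it as the base case without that remark is misleading. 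With these two repairs your plan goes through.
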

More in general we make the following definition.
\begin{definition}[Braided Gerstenhaber algebra]
An associative unital graded algebra and $H$-equivariant braided symmetric
$\mathcal{A}$-bimodule $(\mathfrak{G}^\bullet,\wedge_\mathcal{R})$
is said to be a braided Gerstenhaber algebra if the module actions respect
the degree and if there is an $H$-equivariant
graded (with degree shifted by $-1$) braided Lie bracket
satisfying a graded braided Leibniz rule
with respect to $\wedge_\mathcal{R}$.
\end{definition}
Let $\mathfrak{G}^\bullet$ be a braided Gerstenhaber algebra.
It follows that $\mathfrak{G}^0$ is a braided commutative
left $H$-module algebra and $\mathfrak{G}^1$ is a braided Lie algebra.
Moreover, $\mathfrak{G}^1$ is an $H$-equivariant
braided symmetric $\mathfrak{G}^0$-bimodule and
$\mathfrak{G}^k$ is an $H$-equivariant braided symmetric
$\mathfrak{G}^1$-bimodule.
This means that for any $X\in\mathfrak{G}^1$ we can define the 
\textit{braided Lie derivative}
$\mathscr{L}_X^\mathcal{R}=\llbracket X,\cdot\rrbracket_\mathcal{R}
\colon\mathfrak{G}^k\rightarrow\mathfrak{G}^k$
which is a braided derivation, i.e.
\begin{equation}
    \mathscr{L}_X^\mathcal{R}(Y\wedge_\mathcal{R}Z)
    =\mathscr{L}_X^\mathcal{R}Y\wedge_\mathcal{R}Z
    +(\mathcal{R}_1^{-1}\rhd Y)\wedge_\mathcal{R}
    (\mathcal{R}_2^{-1}\rhd\mathscr{L}_X^\mathcal{R})Z
\end{equation}
for all $X\in\mathfrak{G}^1$ and $Y,Z\in\mathfrak{G}^\bullet$.
It furthermore satisfies
$\mathscr{L}^\mathcal{R}_{[X,Y]_\mathcal{R}}
=\mathscr{L}^\mathcal{R}_X\mathscr{L}^\mathcal{R}_Y
-\mathscr{L}^\mathcal{R}_{\mathcal{R}_1^{-1}\rhd Y}
\mathscr{L}^\mathcal{R}_{\mathcal{R}_2^{-1}\rhd X}$ for all
$X,Y\in\mathfrak{G}^1$.
On the other hand one can start with a braided commutative
left $H$-module algebra $\mathcal{A}$ and construct the braided Gerstenhaber
algebra of its braided multivector fields, as discussed before.
Note that the braided Schouten-Nijenhuis bracket
$\llbracket\cdot,\cdot\rrbracket_\mathcal{R}$ is the unique
braided Gerstenhaber bracket on $(\mathfrak{X}^\bullet_\mathcal{R}(\mathcal{A}),
\wedge_\mathcal{R})$ such that 
\begin{equation}\label{eq09}
    \llbracket X,a\rrbracket_\mathcal{R}=X(a)
    \text{ and }
    \llbracket X,Y\rrbracket_\mathcal{R}=[X,Y]_\mathcal{R}
\end{equation}
hold for all $a\in\mathcal{A}$ and $X,Y\in\mathfrak{X}^1_\mathcal{R}(\mathcal{A})$.

Dually we consider $\Bbbk$-linear maps
$\omega\colon\mathrm{Der}_\mathcal{R}(\mathcal{A})\rightarrow\mathcal{A}$
such that $\omega(X\cdot a)=\omega(X)\cdot a$ for all
$X\in\mathrm{Der}_\mathcal{R}(\mathcal{A})$ and $a\in\mathcal{A}$ and denote
the corresponding $\Bbbk$-module by $\underline{\Omega}^1_\mathcal{R}(\mathcal{A})$.
We structure $\underline{\Omega}^1_\mathcal{R}(\mathcal{A})$ as a
braided symmetric $\mathcal{A}$-bimodule with left and right $\mathcal{A}$-actions
defined by
\begin{equation}
    (a\cdot\omega)(X)=a\cdot\omega(X)
    \text{ and }
    (\omega\cdot a)(X)=\omega(\mathcal{R}_1^{-1}\rhd X)
    \cdot(\mathcal{R}_2^{-1}\rhd a),
\end{equation}
respectively, and left $H$-action
$
(\xi\rhd\omega)(X)=\xi_{(1)}\rhd(\omega(S(\xi_{(2)})\rhd X)),
$
the adjoint action, for all $\xi\in H$, $a\in\mathcal{A}$,
$\omega\in\underline{\Omega}^1_\mathcal{R}(\mathcal{A})$
and $X\in\mathrm{Der}_\mathcal{R}(\mathcal{A})$.
It follows that $\omega(a\cdot X)=(\mathcal{R}_1^{-1}\rhd a)\cdot
(\mathcal{R}_2^{-1}\rhd\omega)(X)$ and
$\xi\rhd(\omega(X))=(\xi_{(1)}\rhd\omega)(\xi_{(2)}\rhd X)$ for all $\xi\in H$,
$\omega\in\underline{\Omega}^1_\mathcal{R}(\mathcal{A})$, $a\in\mathcal{A}$
and $X\in\mathrm{Der}_\mathcal{R}(\mathcal{A})$.
There is an $H$-equivariant insertion 
$\mathrm{i}^\mathcal{R}\colon\mathfrak{X}^1_\mathcal{R}(\mathcal{A})\otimes
\underline{\Omega}^1_\mathcal{R}(\mathcal{A})
\rightarrow\mathcal{A}$, defined for any $X\in\mathrm{Der}_\mathcal{R}(\mathcal{A})$
and $\omega\in\underline{\Omega}^1_\mathcal{R}(\mathcal{A})$ by
$
\mathrm{i}^\mathcal{R}_X\omega
=(\mathcal{R}_1^{-1}\rhd\omega)(\mathcal{R}_2^{-1}\rhd X).
$
In fact,
\begin{align*}
    \xi\rhd(\mathrm{i}^\mathcal{R}_X\omega)
    =&\xi\rhd((\mathcal{R}_1^{-1}\rhd\omega)(\mathcal{R}_2^{-1}\rhd X))
    =((\xi_{(1)}\mathcal{R}_1^{-1})\rhd\omega)
    ((\xi_{(2)}\mathcal{R}_2^{-1})\rhd X)\\
    =&((\mathcal{R}_1^{-1}\xi_{(2)})\rhd\omega)
    ((\mathcal{R}_2^{-1}\xi_{(1)})\rhd X)
    =\mathrm{i}^\mathcal{R}_{\xi_{(1)}\rhd X}(\xi_{(2)}\rhd\omega)
\end{align*}
for all $\xi\in H$, $X\in\mathrm{Der}_\mathcal{R}(\mathcal{A})$ and
$\omega\in\underline{\Omega}^1_\mathcal{R}(\mathcal{A})$.
It follows that the braided exterior algebra 
$\underline{\Omega}^\bullet_\mathcal{R}(\mathcal{A})$ of 
$\underline{\Omega}^1_\mathcal{R}(\mathcal{A})$ is an $H$-equivariant
braided symmetric $\mathcal{A}$-bimodule.
In the following lines we show that it is also
compatible with the braided evaluation.
For $\omega,\eta\in\underline{\Omega}^1_\mathcal{R}(\mathcal{A})$
we define a $\Bbbk$-bilinear map
$\omega\wedge_\mathcal{R}\eta
\colon\mathrm{Der}(\mathcal{A})\times\mathrm{Der}(\mathcal{A})
\rightarrow\mathcal{A}$ by
$$
(\omega\wedge_\mathcal{R}\eta)(X,Y)
=(\omega(\mathcal{R}_1^{-1}\rhd X))((\mathcal{R}_2^{-1}\rhd\eta)(Y))
-(\omega(\mathcal{R}_1^{-1}\rhd Y))((\mathcal{R}_{2(1)}^{-1}\rhd\eta)
(\mathcal{R}_{2(2)}^{-1}\rhd X))
$$
for all $X,Y\in\mathrm{Der}_\mathcal{R}(\mathcal{A})$. One proves that
$$
-(\omega\wedge_\mathcal{R}\eta)
(\mathcal{R}_1^{-1}\rhd Y,\mathcal{R}_2^{-1}\rhd X)
=(\omega\wedge_\mathcal{R}\eta)(X,Y)
=-((\mathcal{R}_1^{-1}\rhd\eta)\wedge_\mathcal{R}
(\mathcal{R}_2^{-1}\rhd\omega))(X,Y)
$$
and that
\begin{equation}
\begin{split}
    (\omega\wedge_\mathcal{R}\eta)(X,Y\cdot a)
    =&((\omega\wedge_\mathcal{R}\eta)(X,Y))\cdot a,\\
    (\omega\wedge_\mathcal{R}\eta)(a\cdot X,Y)
    =&(\mathcal{R}_1^{-1}\rhd a)\cdot
    ((\mathcal{R}_2^{-1}\rhd(\omega\wedge_\mathcal{R}\eta))(X,Y)),\\
    \xi\rhd((\omega\wedge_\mathcal{R}\eta)(X,Y))
    =&((\xi_{(1)}\rhd\omega)\wedge_\mathcal{R}(\xi_{(2)}\rhd\eta))
    (\xi_{(3)}\rhd X,\xi_{(4)}\rhd Y)
\end{split}
\end{equation}
hold for all $\xi\in H$, $\omega,\eta\in\underline{\Omega}^1_\mathcal{R}
(\mathcal{A})$, $a\in\mathcal{A}$ and
$X,Y\in\mathrm{Der}_\mathcal{R}(\mathcal{A})$.
The evaluations of the $H$-action and $\mathcal{A}$-module actions read
\begin{equation}
\begin{split}
    (\xi\rhd(\omega\wedge_\mathcal{R}\eta))(X,Y)
    =&\xi_{(1)}\rhd((\omega\wedge_\mathcal{R}\eta)
    (S(\xi_{(3)})\rhd X,S(\xi_{(2)})\rhd Y)),\\
    (a\cdot(\omega\wedge_\mathcal{R}\eta))(X,Y)
    =&a\cdot((\omega\wedge_\mathcal{R}\eta)(X,Y)),\\
    ((\omega\wedge_\mathcal{R}\eta)\cdot a)(X,Y)
    =&((\omega\wedge_\mathcal{R}\eta)
    (\mathcal{R}_{1(1)}^{-1}\rhd X,\mathcal{R}_{1(2)}^{-1}\rhd Y))
    \cdot(\mathcal{R}_2^{-1}\rhd a).
\end{split}
\end{equation}
Inductively one defines the evaluation of higher wedge products.
Explicitly, the evaluated module actions on factorizing elements
$\omega_1\wedge_\mathcal{R}\ldots\wedge_\mathcal{R}\omega_k
\in\underline{\Omega}^k_\mathcal{R}(\mathcal{A})$ read
\begin{equation}
\begin{split}
    (\xi\rhd&(\omega_1\wedge_\mathcal{R}\ldots\wedge_\mathcal{R}\omega_k))
    (X_1,\ldots,X_k)\\
    =&\xi_{(1)}\rhd((\omega_1\wedge_\mathcal{R}\ldots\wedge_\mathcal{R}\omega_k)
    (S(\xi_{(k+1)})\rhd X_1,\ldots,S(\xi_{(2)})\rhd X_k)),
\end{split}
\end{equation}
\begin{equation}
    (a\cdot(\omega_1\wedge_\mathcal{R}\ldots\wedge_\mathcal{R}\omega_k))
    (X_1,\ldots,X_k)
    =a\cdot((\omega_1\wedge_\mathcal{R}\ldots\wedge_\mathcal{R}\omega_k)
    (X_1,\ldots,X_k))
\end{equation}
and
\begin{equation}
\begin{split}
    ((\omega_1&\wedge_\mathcal{R}\ldots\wedge_\mathcal{R}\omega_k)\cdot a)
    (X_1,\ldots,X_k)\\
    =&((\omega_1\wedge_\mathcal{R}\ldots\wedge_\mathcal{R}\omega_k)
    (\mathcal{R}_{1(1)}^{-1}\rhd X_1,\ldots,\mathcal{R}_{1(k)}^{-1}\rhd X_k))
    \cdot(\mathcal{R}_2^{-1}\rhd a).
\end{split}
\end{equation}
for all $X_1,\ldots,X_k\in\mathrm{Der}_\mathcal{R}(\mathcal{A})$,
$a\in\mathcal{A}$ and $\xi\in H$.
It is useful to further define the insertion $\mathrm{i}^\mathcal{R}_X\colon
\underline{\Omega}^\bullet_\mathcal{R}(\mathcal{A})
\rightarrow\underline{\Omega}^{\bullet-1}_\mathcal{R}(\mathcal{A})$ of an element
$X\in\mathrm{Der}_\mathcal{R}(\mathcal{A})$ into the \textit{last} slot 
an element $\omega\in\underline{\Omega}^k_\mathcal{R}(\mathcal{A})$ by
\begin{equation}
    \mathrm{i}^\mathcal{R}_X\omega
    =(-1)^{k-1}(\mathcal{R}_1^{-1}\rhd\omega)(\cdot,\ldots,\cdot,
    \mathcal{R}_2^{-1}\rhd X).
\end{equation}
Inductively we set 
\begin{equation}
    \mathrm{i}^\mathcal{R}_{X\wedge_\mathcal{R}Y}
    =\mathrm{i}^\mathcal{R}_{X}
    \mathrm{i}^\mathcal{R}_{Y}
\end{equation}
for all $X,Y\in\mathfrak{X}^\bullet_\mathcal{R}(\mathcal{A})$.
\begin{lemma}
$(\underline{\Omega}^\bullet_\mathcal{R}(\mathcal{A}),\wedge_\mathcal{R})$
is a graded braided commutative associative unital algebra
and an $H$-equivariant braided symmetric $\mathcal{A}$-bimodule. The insertion 
\begin{equation}
    \mathrm{i}^\mathcal{R}\colon\mathfrak{X}^\bullet_\mathcal{R}(\mathcal{A})\otimes
    \underline{\Omega}^\bullet_\mathcal{R}(\mathcal{A})
    \rightarrow\underline{\Omega}^{\bullet}_\mathcal{R}(\mathcal{A})
\end{equation}
of braided multivector fields
is $H$-equivariant such that $\mathrm{i}^\mathcal{R}_X$ is a
right $\mathcal{A}$-linear and braided left
$\mathcal{A}$-linear homogeneous map of degree $-k$
for all $X\in\mathfrak{X}^k_\mathcal{R}(\mathcal{A})$. Furthermore,
$\mathrm{i}^\mathcal{R}_X$ is
left $\mathcal{A}$-linear and braided right $\mathcal{A}$-linear in $X$.
If $k=1$ $\mathrm{i}^\mathcal{R}_X$ is a graded braided derivation
of degree $-1$.
\end{lemma}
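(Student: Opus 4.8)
\emph{Sketch of proof.} The first assertion is essentially a specialisation of the general braided Graßmann algebra construction recalled above. Since $\underline{\Omega}^1_\mathcal{R}(\mathcal{A})$ has already been shown to be an object of ${}_\mathcal{A}^H\mathcal{M}_\mathcal{A}^\mathcal{R}$, its braided exterior algebra $\underline{\Omega}^\bullet_\mathcal{R}(\mathcal{A})$ is, by that construction, a graded associative unital algebra, graded braided commutative with respect to $\wedge_\mathcal{R}$, and an $H$-equivariant braided symmetric $\mathcal{A}$-bimodule whose left and right actions respect the grading. The only thing that genuinely requires checking is that the \emph{evaluated} model used here — degree-$k$ forms realised as $\Bbbk$-multilinear maps $\mathrm{Der}_\mathcal{R}(\mathcal{A})^{\times k}\rightarrow\mathcal{A}$, with the explicit braided wedge product and the evaluated $H$- and $\mathcal{A}$-actions displayed above — reproduces this abstract quotient. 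I would verify this by induction on the degree: the two displayed identities for $(\omega\wedge_\mathcal{R}\eta)$ under exchange of arguments show that the explicit formula is graded braided skew, hence factors through the defining ideal, and comparison with the abstract product via the braided evaluation pairing $\mathrm{i}^\mathcal{R}$ identifies the two; associativity then propagates to all degrees.

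For the insertion I would first isolate the degree-one case. For $X\in\mathrm{Der}_\mathcal{R}(\mathcal{A})$ and $\omega\in\underline{\Omega}^1_\mathcal{R}(\mathcal{A})$ the element $\mathrm{i}^\mathcal{R}_X\omega=(\mathcal{R}_1^{-1}\rhd\omega)(\mathcal{R}_2^{-1}\rhd X)$ is $H$-equivariant (computed above), while its right $\mathcal{A}$-linearity, braided left $\mathcal{A}$-linearity, and left/braided-right $\mathcal{A}$-linearity in $X$ follow from the displayed module-action formulas for $\underline{\Omega}^1_\mathcal{R}(\mathcal{A})$ and $\mathrm{Der}_\mathcal{R}(\mathcal{A})$ together with the hexagon relations and quasi-cocommutativity $\Delta_{21}(\xi)=\mathcal{R}\Delta(\xi)\mathcal{R}^{-1}$. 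I would then bootstrap: using the recursive definition of $\mathrm{i}^\mathcal{R}_X$ on $\underline{\Omega}^k_\mathcal{R}(\mathcal{A})$ (insertion into the last slot) together with $\mathrm{i}^\mathcal{R}_{X\wedge_\mathcal{R}Y}=\mathrm{i}^\mathcal{R}_X\mathrm{i}^\mathcal{R}_Y$ and induction on both the form degree and the multivector degree, all four properties, the degree shift by $-k$, and the $H$-equivariance of $\mathrm{i}^\mathcal{R}\colon\mathfrak{X}^\bullet_\mathcal{R}(\mathcal{A})\otimes\underline{\Omega}^\bullet_\mathcal{R}(\mathcal{A})\rightarrow\underline{\Omega}^\bullet_\mathcal{R}(\mathcal{A})$ extend to arbitrary homogeneous arguments; linearity and braided linearity in $X$ likewise descend from the bimodule structure of $\mathfrak{X}^\bullet_\mathcal{R}(\mathcal{A})$ established earlier.

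The substantive point is the last statement: for $X\in\mathfrak{X}^1_\mathcal{R}(\mathcal{A})$ the operator $\mathrm{i}^\mathcal{R}_X$ is a graded braided derivation of degree $-1$, i.e.
\begin{equation}
\mathrm{i}^\mathcal{R}_X(\omega\wedge_\mathcal{R}\eta)
=(\mathrm{i}^\mathcal{R}_X\omega)\wedge_\mathcal{R}\eta
+(-1)^{p}(\mathcal{R}_1^{-1}\rhd\omega)\wedge_\mathcal{R}
\big((\mathcal{R}_2^{-1}\rhd\mathrm{i}^\mathcal{R}_X)\eta\big)
\end{equation}
for $\omega\in\underline{\Omega}^p_\mathcal{R}(\mathcal{A})$, consistently with the graded braided commutator convention fixed in the Introduction. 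I would reduce this to $\omega,\eta\in\underline{\Omega}^1_\mathcal{R}(\mathcal{A})$, where $\omega\wedge_\mathcal{R}\eta$ is the explicit two-argument map above and $\mathrm{i}^\mathcal{R}_X$ inserts $X$ (appropriately $\mathcal{R}^{-1}$-twisted) into the last of its two slots; expanding both sides and comparing term by term, the two summands on the right correspond precisely to whether the slot occupied by $X$ sits ``past $\eta$'' or ``past $\omega$'', the transition being implemented by a single application of the braiding $c^\mathcal{R}$. The identity then closes after repeated use of the hexagon relations for $\mathcal{R}$ (equivalently the quantum Yang--Baxter equation) and quasi-cocommutativity. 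The general case follows by induction on $\deg\eta$ from associativity of $\wedge_\mathcal{R}$ and $\mathrm{i}^\mathcal{R}_{X\wedge_\mathcal{R}Y}=\mathrm{i}^\mathcal{R}_X\mathrm{i}^\mathcal{R}_Y$.

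The main obstacle is exactly the bookkeeping in this last computation: the braided wedge product already braids its two arguments against one another and against the form legs, and $\mathrm{i}^\mathcal{R}_X$ re-braids $X$ into the far slot, so the Sweedler indices and signs only align after careful iterated use of the hexagon relations and of quasi-cocommutativity. The cleanest way to organise this is to view $\mathrm{i}^\mathcal{R}$, $\wedge_\mathcal{R}$ and the evaluated actions as morphisms in the symmetric braided monoidal category $({}_\mathcal{A}^H\mathcal{M}_\mathcal{A}^\mathcal{R},\otimes_\mathcal{A},c^\mathcal{R})$ of Theorem~\ref{thm02} and to run the classical argument diagrammatically, inserting the crossing $c^\mathcal{R}$ wherever the classical proof exchanges two tensor legs; naturality of the braiding then discharges the combinatorics. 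Note that one cannot simply transport the classical Cartan calculus along the Drinfel'd functor, since a general braided commutative $\mathcal{A}$ need not be a twist deformation of a commutative algebra, so a direct argument is unavoidable.
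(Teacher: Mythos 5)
Your proposal is correct and follows essentially the same route as the paper's proof: verify the $H$-equivariant braided symmetric bimodule structure in degree one, let it propagate to the braided exterior algebra, establish the graded braided derivation property of $\mathrm{i}^\mathcal{R}_X$ by an explicit computation for $\omega,\eta\in\underline{\Omega}^1_\mathcal{R}(\mathcal{A})$ using the hexagon relations and quasi-cocommutativity, and then extend everything (equivariance, linearity properties, higher multivector degree) by induction via $\mathrm{i}^\mathcal{R}_{X\wedge_\mathcal{R}Y}=\mathrm{i}^\mathcal{R}_X\mathrm{i}^\mathcal{R}_Y$. The only organizational difference is that the paper carries out, inside the proof itself, the well-definedness, equivariance and braided-symmetry checks for the actions on $\underline{\Omega}^1_\mathcal{R}(\mathcal{A})$ that you treat as already established from the preceding discussion.
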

\begin{proof}
Fix $a,b\in\mathcal{A}$, $X\in\mathrm{Der}_\mathcal{R}(\mathcal{A})$, $\xi\in H$ and
$\omega\in\underline{\Omega}^1_\mathcal{R}(\mathcal{A})$.
First of all, the left and right $\mathcal{A}$ and left $H$-module actions
are well-defined on $\underline{\Omega}^1_\mathcal{R}(\mathcal{A})$,
since $(b\cdot\omega)(X\cdot a)=b\cdot(\omega(X\cdot a))
=((b\cdot\omega)(X))\cdot a$,
\begin{align*}
    (\omega\cdot b)(X\cdot a)
    =&\omega((\mathcal{R}_{1(1)}^{-1}\rhd X)\cdot(\mathcal{R}_{1(2)}^{-1}\rhd a))
    \cdot(\mathcal{R}_2^{-1}\rhd b)\\
    =&\omega(\mathcal{R}_{1(1)}^{-1}\rhd X)
    \cdot((\mathcal{R}_1^{'-1}\mathcal{R}_2^{-1})\rhd b)
    \cdot((\mathcal{R}_2^{'-1}\mathcal{R}_{1(2)}^{-1})\rhd a)\\
    =&\omega(\mathcal{R}_{1}^{-1}\rhd X)
    \cdot(\mathcal{R}_2^{-1}\rhd b)\cdot a\\
    =&((\omega\cdot b)(X))\cdot a
\end{align*}
and
\begin{align*}
    (\xi\rhd\omega)(X\cdot a)
    =&\xi_{(1)}\rhd(\omega((S(\xi_{(2)})_{(1)}\rhd X)
    \cdot(S(\xi_{(2)})_{(2)}\rhd a)))\\
    =&\xi_{(1)}\rhd(\omega(S(\xi_{(3)})\rhd X)
    \cdot(S(\xi_{(2)})\rhd a))\\
    =&(\xi_{(1)}\rhd\omega(S(\xi_{(4)})\rhd X))
    \cdot((\xi_{(2)}S(\xi_{(3)}))\rhd a)\\
    =&((\xi_{(1)}\rhd\omega)((\xi_{(2)}S(\xi_{(3)}))\rhd X))\cdot a\\
    =&((\xi\rhd\omega)(X))\cdot a
\end{align*}
hold by the hexagon relations and the bialgebra anti-homomorphism properties of
$S$. The $\mathcal{A}$-bimodule is $H$-equivariant, since
\begin{align*}
    (\xi\rhd(a\cdot\omega\cdot b))(X)
    =&\xi_{(1)}\rhd((a\cdot\omega\cdot b)(S(\xi_{(2)})\rhd X))\\
    =&(\xi_{(1)}\rhd a)\cdot
    (\xi_{(2)}\rhd(\omega((\mathcal{R}_1^{-1}S(\xi_{(4)}))\rhd X)))\cdot
    ((\xi_{(3)}\mathcal{R}_2^{-1})\rhd b)\\
    =&(\xi_{(1)}\rhd a)\cdot
    ((\xi_{(2)}\rhd\omega)((\xi_{(3)}\mathcal{R}_1^{-1}S(\xi_{(5)}))\rhd X))\cdot
    ((\xi_{(4)}\mathcal{R}_2^{-1})\rhd b)\\
    =&(\xi_{(1)}\rhd a)\cdot
    ((\xi_{(2)}\rhd\omega)(\mathcal{R}_1^{-1}\rhd X))\cdot
    ((\mathcal{R}_2^{-1}\xi_{(3)})\rhd b)\\
    =&((\xi_{(1)}\rhd a)\cdot(\xi_{(2)}\rhd\omega)\cdot(\xi_{(3)}\rhd b))(X)
\end{align*}
and it is braided symmetric because
\begin{align*}
    ((\mathcal{R}_1^{-1}\rhd\omega)\cdot(\mathcal{R}_2^{-1}\rhd a))(X)
    =&((\mathcal{R}_1^{-1}\rhd\omega)
    (\mathcal{R}_1^{'-1}\rhd X))
    \cdot((\mathcal{R}_2^{'-1}\mathcal{R}_2^{-1})\rhd a)\\
    =&((\mathcal{R}_1^{''-1}\mathcal{R}_2^{'-1}\mathcal{R}_2^{-1})\rhd a)
    \cdot(\mathcal{R}_2^{''-1}\rhd((\mathcal{R}_1^{-1}\rhd\omega)
    (\mathcal{R}_1^{'-1}\rhd X)))\\
    =&(a\cdot\omega)(X).
\end{align*}
These properties extend to the braided Graßmann algebra
$\underline{\Omega}^\bullet_\mathcal{R}(\mathcal{A})$, giving an associative
graded braided commutative product $\wedge_\mathcal{R}$.
We further prove that $\mathrm{i}^\mathcal{R}_X$
is a graded braided derivation of the wedge product for
$X\in\mathrm{Der}_\mathcal{R}
(\mathcal{A})$. Let $\omega,\eta\in\underline{\Omega}^1_\mathcal{R}(\mathcal{A})$.
Then
\begin{align*}
    \mathrm{i}^\mathcal{R}_X(\omega\wedge_\mathcal{R}\eta)
    =&(-1)^{2-1}((\mathcal{R}_{1(1)}^{-1}\rhd\omega)
    \wedge_\mathcal{R}(\mathcal{R}_{1(2)}^{-1}\rhd\eta))
    (\cdot,\mathcal{R}_2^{-1}\rhd X)\\
    =&-(\mathcal{R}_{1(1)}^{-1}\rhd\omega)
    (\mathcal{R}_{1(2)}^{-1}\rhd\eta)(\mathcal{R}_2^{-1}\rhd X)\\
    &+(\mathcal{R}_{1(1)}^{-1}\rhd\omega)
    ((\mathcal{R}_1^{'-1}\mathcal{R}_2^{-1})\rhd X)
    ((\mathcal{R}_2^{'-1}\mathcal{R}_{1(2)}^{-1})\rhd\eta)\\
    =&\mathrm{i}^\mathcal{R}_X(\omega)\wedge_\mathcal{R}\eta
    +(-1)^{1\cdot 1}(\mathcal{R}_1^{-1}\rhd\omega)\wedge_\mathcal{R}
    \mathrm{i}^\mathcal{R}_{\mathcal{R}_2^{-1}\rhd X}\eta.
\end{align*}
In particular this implies $\xi\rhd(\mathrm{i}^\mathcal{R}_X
(\omega\wedge_\mathcal{R}\eta))
=\mathrm{i}^\mathcal{R}_{\xi_{(1)}\rhd
X}((\xi_{(2)}\rhd\omega)\wedge_\mathcal{R}(\xi_{(3)}\rhd\eta))$
for all $\xi\in H$.
Inductively, one shows
\begin{align*}
    \mathrm{i}^\mathcal{R}_X(\omega\wedge_\mathcal{R}\eta)
    =(\mathrm{i}^\mathcal{R}_X\omega)\wedge_\mathcal{R}\eta
    +(-1)^k(\mathcal{R}_1^{-1}\rhd\omega)\wedge_\mathcal{R}
    \mathrm{i}^\mathcal{R}_{\mathcal{R}_2^{-1}\rhd X}\eta
\end{align*}
and $\xi\rhd(\mathrm{i}^\mathcal{R}_X\eta)
=\mathrm{i}^\mathcal{R}_{\xi_{(1)}\rhd X}(\xi_{(2)}\rhd\eta)$
for all $\xi\in H$, $X\in\mathrm{Der}_\mathcal{R}(\mathcal{A})$, $\omega\in
\underline{\Omega}^k_\mathcal{R}(\mathcal{A})$ and $\eta\in
\underline{\Omega}^\bullet_\mathcal{R}(\mathcal{A})$.
For factorizing elements $X_1\wedge_\mathcal{R}
X_2\in\mathfrak{X}^2_\mathcal{R}(\mathcal{A})$ this implies
\begin{align*}
    \xi\rhd\mathrm{i}^\mathcal{R}_{X_1\wedge_\mathcal{R}X_2}\omega
    =&\xi\rhd(\mathrm{i}^\mathcal{R}_{X_1}
    \mathrm{i}^\mathcal{R}_{X_2}\omega)
    =\mathrm{i}^\mathcal{R}_{\xi_{(1)}\rhd X_1}
    \mathrm{i}^\mathcal{R}_{\xi_{(2)}\rhd X_2}(\xi_{(3)}\rhd\omega)
    =\mathrm{i}^\mathcal{R}_{\xi_{(1)}\rhd(X_1\wedge_\mathcal{R}X_2)}
    (\xi_{(2)}\rhd\omega)
\end{align*}
for all $\xi\in H$ and $\omega\in\underline{\Omega}^\bullet_\mathcal{R}(\mathcal{A})$
and inductively one obtains $\xi\rhd(\mathrm{i}^\mathcal{R}_X\omega)
=\mathrm{i}^\mathcal{R}_{\xi_{(1)}\rhd X}(\xi_{(2)}\rhd\omega)$ for any
$X\in\underline{\Omega}^\bullet_\mathcal{R}(\mathcal{A})$. It is easy to verify
that $\mathrm{i}^\mathcal{R}$ sarisfies the linearity properties
\begin{equation}
\begin{split}
    \mathrm{i}^\mathcal{R}_{a\cdot X}\omega
    =&a\cdot(\mathrm{i}^\mathcal{R}_X\omega),~
    \mathrm{i}^\mathcal{R}_{X\cdot a}\omega
    =(\mathrm{i}^\mathcal{R}_X(\mathcal{R}_1^{-1}\rhd\omega))
    \cdot(\mathcal{R}_2^{-1}\rhd a),\\
    \mathrm{i}^\mathcal{R}_X(\omega\cdot a)
    =&(\mathrm{i}^\mathcal{R}_X\omega)\cdot a,~
    \mathrm{i}^\mathcal{R}_X(a\cdot\omega)
    =(\mathcal{R}_1^{-1}\rhd a)\cdot
    (\mathrm{i}^\mathcal{R}_{\mathcal{R}_2^{-1}\rhd X}\omega)
\end{split}
\end{equation}
for all $X\in\mathfrak{X}^\bullet_\mathcal{R}(\mathcal{A})$,
$a\in\mathcal{A}$ and $\omega\in\underline{\Omega}^\bullet_\mathcal{R}
(\mathcal{A})$. This concludes the proof of the lemma.
\end{proof}

\subsection{Braided Cartan Calculi and Gauge Equivalence}

In the following pages we construct a noncommutative Cartan calculus
for any braided symmetric algebra. The development is entirely parallel to
the Cartan calculus of a commutative algebra, however in a symmetric
braided monoidal category. In particular, we are not constrained to use
the center of the algebra. Afterwards we define a twist deformation
of any braided Cartan calculus and show that it is isomorphic to the braided
Cartan calculus of the twist deformed algebra with respect to the
twisted triangular structure.

One defines a differential
$\mathrm{d}\colon\underline{\Omega}_\mathcal{R}^\bullet(\mathcal{A})
\rightarrow\underline{\Omega}_\mathcal{R}^{\bullet+1}(\mathcal{A})$
on $a\in\mathcal{A}$ by $\mathrm{i}^\mathcal{R}_X(\mathrm{d}a)=X(a)$ for all
$X\in\mathrm{Der}_\mathcal{R}(\mathcal{A})$, on 
$\omega\in\underline{\Omega}^1_\mathcal{R}(\mathcal{A})$ by
\begin{equation}
    (\mathrm{d}\omega)(X,Y)
    =(\mathcal{R}_1^{-1}\rhd X)((\mathcal{R}_2^{-1}\rhd\omega)(Y))
    -(\mathcal{R}_1^{-1}\rhd Y)(\mathcal{R}_2^{-1}\rhd(\omega(X)))
    -\omega([X,Y]_\mathcal{R})    
\end{equation}
for all $X,Y\in\mathrm{Der}_\mathcal{R}(\mathcal{A})$
and extends $\mathrm{d}$
to higher wedge powers by demanding it to be a
graded derivation with respect to $\wedge_\mathcal{R}$, i.e.
\begin{equation}
    \mathrm{d}(\omega_1\wedge_\mathcal{R}\omega_2)
    =(\mathrm{d}\omega_1)\wedge_\mathcal{R}\omega_2
    +(-1)^k\omega_1\wedge_\mathcal{R}(\mathrm{d}\omega_2)
\end{equation}
for $\omega_1\in\underline{\Omega}_\mathcal{R}^k(\mathcal{A})$
and $\omega_2\in\underline{\Omega}_\mathcal{R}^\bullet(\mathcal{A})$.
Alternatively we can directly define
$\mathrm{d}\omega\in\underline{\Omega}^{k+1}_\mathcal{R}(\mathcal{A})$ for any
$\omega\in\underline{\Omega}^{k}_\mathcal{R}(\mathcal{A})$ by
\begin{equation}\label{eq06}
\begin{split}
    (\mathrm{d}\omega)(X_0,\ldots,X_k)
    =&\sum_{i=0}^k(-1)^i
    (\mathcal{R}_1^{-1}\rhd X_i)
    \bigg((\mathcal{R}_{2(1)}^{-1}\rhd\omega)\bigg(\\
    &\mathcal{R}_{2(2)}^{-1}\rhd X_0,\ldots,
    \mathcal{R}_{2(i+1)}^{-1}\rhd X_{i-1},
    \widehat{X_i},X_{i+1},\ldots,X_k\bigg)\bigg)\\
    &+\sum_{i<j}(-1)^{i+j}
    \omega\bigg([\mathcal{R}_1^{-1}\rhd X_i,
    \mathcal{R}_1^{'-1}\rhd X_j]_\mathcal{R},\\
    &(\mathcal{R}_{2(1)}^{'-1}\mathcal{R}_{2(1)}^{-1})\rhd X_{0},
    \ldots,(\mathcal{R}_{2(i)}^{'-1}\mathcal{R}_{2(i)}^{-1})\rhd X_{i-1},
    \widehat{X_i},\\
    &\mathcal{R}_{2(i+1)}^{'-1}\rhd X_{i+1},\ldots,
    \mathcal{R}_{2(j-1)}^{'-1}\rhd X_{j-1},\widehat{X_j},
    X_{j+1},\ldots,X_k\bigg)
\end{split}
\end{equation}
for all $X_0,\ldots,X_k\in\mathrm{Der}_\mathcal{R}(\mathcal{A})$.
It is sufficient to prove $\mathrm{d}^2=0$ on $\underline{\Omega}^k_\mathcal{R}
(\mathcal{A})$ for $k<2$, since $\mathrm{d}^2$ is a graded braided derivation.
The computations can be found in \cite{ThomasPhDThesis}.
Define now the \textit{braided differential forms}
$\Omega^\bullet_\mathcal{R}(\mathcal{A})$ on $\mathcal{A}$ to be the smallest
differential graded subalgebra of
$\underline{\Omega}^\bullet_\mathcal{R}(\mathcal{A})$
such that $\mathcal{A}\subseteq\Omega^\bullet_\mathcal{R}(\mathcal{A})$.
Every element of $\Omega^k_\mathcal{R}(\mathcal{A})$
can be written as a finite sum of elements of the form 
$a_0\mathrm{d}a_1\wedge_\mathcal{R}\ldots\wedge_\mathcal{R}\mathrm{d}a_k$, where
$a_0,\ldots,a_k\in\mathcal{A}$.
Using eq.(\ref{eq06}) and the fact that the braided
commutator is $H$-equivariant it immediately follows that $\mathrm{d}$
commutes with the left $H$-module action. In other words, $\mathrm{d}$
is equivariant with respect to the adjoint action, implying
\begin{equation}
    (\xi\rhd\mathrm{d})\omega
    =\xi_{(1)}\rhd(\mathrm{d}(S(\xi_{(2)})\rhd\omega))
    =(\xi_{(1)}S(\xi_{(2)}))\rhd(\mathrm{d}\omega)
    =\epsilon(\xi)\mathrm{d}\omega
\end{equation}
for all $\xi\in H$ and $\omega\in\Omega^\bullet_\mathcal{R}(\mathcal{A})$.
Recall that the \textit{graded braided commutator}
of two homogeneous maps
$\Phi,\Psi\colon\mathfrak{G}^\bullet\rightarrow\mathfrak{G}^\bullet$
of degree $k$ and $\ell$ between braided Graßmann algebras is defined by
\begin{equation}
    [\Phi,\Psi]_\mathcal{R}
    =\Phi\circ\Psi-(-1)^{k\ell}(\mathcal{R}_1^{-1}\rhd\Psi)
    \circ(\mathcal{R}_2^{-1}\rhd\Phi).
\end{equation}
If $\Phi$ or $\Psi$ is equivariant, the graded braided
commutator coincides with the graded commutator. Furthermore,
if $\Phi,\Psi\colon\mathfrak{X}^\bullet_\mathcal{R}(\mathcal{A})
\otimes\mathfrak{G}^\bullet\rightarrow\mathfrak{G}^\bullet$
are $H$-equivariant maps such that
$\Phi_X,\Psi_Y\colon\mathfrak{G}^\bullet\rightarrow\mathfrak{G}^\bullet$
are homogeneous of degree $k$ and $\ell$
for any $X\in\mathfrak{X}^k_\mathcal{R}(\mathcal{A})$ and
$Y\in\mathfrak{X}^\ell_\mathcal{R}(\mathcal{A})$, respectively, 
the graded braided commutator of $\Phi_X$ and $\Psi_Y$ reads
\begin{equation}
    [\Phi_X,\Psi_Y]_\mathcal{R}
    =\Phi_X\circ\Psi_Y
    -(-1)^{k\ell}\Psi_{\mathcal{R}_1^{-1}\rhd Y}\circ\Phi_{\mathcal{R}_2^{-1}\rhd X}.
\end{equation}
For any $X\in\mathfrak{X}^\bullet_\mathcal{R}(\mathcal{A})$ we define the
\textit{braided Lie derivative} $\mathscr{L}^\mathcal{R}\colon
\mathfrak{X}^\bullet_\mathcal{R}(\mathcal{A})\otimes
\Omega^\bullet_\mathcal{R}(\mathcal{A})
\rightarrow\Omega^{\bullet}_\mathcal{R}(\mathcal{A})$ by
$\mathscr{L}^\mathcal{R}_X=[\mathrm{i}^\mathcal{R}_X,\mathrm{d}]_\mathcal{R}$.
It is $H$-equivariant and
if $X\in\mathfrak{X}^k_\mathcal{R}(\mathcal{A})$, $\mathscr{L}^\mathcal{R}_X$
is a homogeneous map of degree $-(k-1)$.
For $k=1$ we obtain a braided derivation $\mathscr{L}^\mathcal{R}_X$ of 
$\Omega^\bullet_\mathcal{R}(\mathcal{A})$.
\begin{lemma}\label{lemma02}
One has
\begin{equation}
    \mathscr{L}^\mathcal{R}_a\omega=-(\mathrm{d}a)\wedge_\mathcal{R}\omega
    \text{ and }
    \mathscr{L}^\mathcal{R}_{X\wedge_\mathcal{R}Y}
    =\mathrm{i}^\mathcal{R}_X\mathscr{L}^\mathcal{R}_Y
    +(-1)^\ell\mathscr{L}^\mathcal{R}_X\mathrm{i}^\mathcal{R}_Y
\end{equation}
for all $a\in\mathcal{A}$, 
$\omega\in\Omega^\bullet_\mathcal{R}(\mathcal{A})$,
$X\in\mathfrak{X}^\bullet_\mathcal{R}(\mathcal{A})$ and
$Y\in\mathfrak{X}^\ell_\mathcal{R}(\mathcal{A})$. If
$X,Y\in\mathfrak{X}^1_\mathcal{R}(\mathcal{A})$
\begin{equation}
    [\mathscr{L}^\mathcal{R}_X,\mathrm{i}^\mathcal{R}_Y]_\mathcal{R}
    =\mathrm{i}^\mathcal{R}_{[X,Y]_\mathcal{R}}
\end{equation}
holds.
\end{lemma}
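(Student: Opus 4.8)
The plan is to reduce all three statements to the defining relations $\mathscr{L}^\mathcal{R}_X=[\mathrm{i}^\mathcal{R}_X,\mathrm{d}]_\mathcal{R}$, $\mathrm{i}^\mathcal{R}_{X\wedge_\mathcal{R}Y}=\mathrm{i}^\mathcal{R}_X\mathrm{i}^\mathcal{R}_Y$, the graded Leibniz rule for $\mathrm{d}$, and the equivariance of $\mathrm{d}$; the latter is what lets every graded braided commutator involving $\mathrm{d}$ collapse to an ordinary graded commutator, as already observed in the excerpt. For the first identity, note that $a\in\mathfrak{X}^0_\mathcal{R}(\mathcal{A})$ makes $\mathrm{i}^\mathcal{R}_a$ the degree-$0$ operator of left multiplication, $\mathrm{i}^\mathcal{R}_a\omega=a\wedge_\mathcal{R}\omega$ (from $\mathrm{i}^\mathcal{R}_{a\cdot X}\omega=a\cdot\mathrm{i}^\mathcal{R}_X\omega$ with $X=1$). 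Hence $\mathscr{L}^\mathcal{R}_a\omega=\mathrm{i}^\mathcal{R}_a(\mathrm{d}\omega)-\mathrm{d}(\mathrm{i}^\mathcal{R}_a\omega)=a\wedge_\mathcal{R}\mathrm{d}\omega-\mathrm{d}(a\wedge_\mathcal{R}\omega)$, and the graded Leibniz rule for $\mathrm{d}$ with first argument $a$ in degree $0$ cancels the $a\wedge_\mathcal{R}\mathrm{d}\omega$ term, leaving $-(\mathrm{d}a)\wedge_\mathcal{R}\omega$.

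For the second identity I would substitute $\mathrm{i}^\mathcal{R}_{X\wedge_\mathcal{R}Y}=\mathrm{i}^\mathcal{R}_X\mathrm{i}^\mathcal{R}_Y$ into $\mathscr{L}^\mathcal{R}_{X\wedge_\mathcal{R}Y}=[\mathrm{i}^\mathcal{R}_{X\wedge_\mathcal{R}Y},\mathrm{d}]$ and apply the standard operator identity $[AB,C]=A[B,C]+(-1)^{|B|\,|C|}[A,C]B$ with $A=\mathrm{i}^\mathcal{R}_X$ of degree $-k$, $B=\mathrm{i}^\mathcal{R}_Y$ of degree $-\ell$ and $C=\mathrm{d}$ of degree $1$. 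Since $[\mathrm{i}^\mathcal{R}_Y,\mathrm{d}]=\mathscr{L}^\mathcal{R}_Y$, $[\mathrm{i}^\mathcal{R}_X,\mathrm{d}]=\mathscr{L}^\mathcal{R}_X$ and $(-1)^{-\ell}=(-1)^\ell$, this yields exactly $\mathrm{i}^\mathcal{R}_X\mathscr{L}^\mathcal{R}_Y+(-1)^\ell\mathscr{L}^\mathcal{R}_X\mathrm{i}^\mathcal{R}_Y$; a quick degree check ($-k+(1-\ell)=(1-k)-\ell=1-k-\ell$) confirms both sides are homogeneous of the expected degree.

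For the third identity I would argue that, for $X,Y\in\mathfrak{X}^1_\mathcal{R}(\mathcal{A})$, both sides are $H$-equivariant graded braided derivations of $\Omega^\bullet_\mathcal{R}(\mathcal{A})$ of degree $-1$: the right-hand side by the insertion lemma, and the left-hand side as the graded braided commutator of the braided derivation $\mathscr{L}^\mathcal{R}_X$ (degree $0$) with the graded braided derivation $\mathrm{i}^\mathcal{R}_Y$ (degree $-1$). Because $\Omega^\bullet_\mathcal{R}(\mathcal{A})$ is generated as an algebra by $\mathcal{A}$ and $\mathrm{d}\mathcal{A}$, it then suffices to test the identity on $a$ and on $\mathrm{d}a$ for $a\in\mathcal{A}$: on $\mathcal{A}$ both sides vanish for degree reasons, and on $\mathrm{d}a$ one uses $\mathrm{i}^\mathcal{R}_Z(\mathrm{d}a)=Z(a)$, $\mathscr{L}^\mathcal{R}_Z|_{\mathcal{A}}=Z$, and $[\mathscr{L}^\mathcal{R}_Z,\mathrm{d}]=0$ (immediate from $\mathrm{d}^2=0$ and Cartan's formula) to compute $[\mathscr{L}^\mathcal{R}_X,\mathrm{i}^\mathcal{R}_Y]_\mathcal{R}(\mathrm{d}a)=X(Y(a))-(\mathcal{R}_1^{-1}\rhd Y)((\mathcal{R}_2^{-1}\rhd X)(a))$, which equals $[X,Y]_\mathcal{R}(a)=\mathrm{i}^\mathcal{R}_{[X,Y]_\mathcal{R}}(\mathrm{d}a)$ by the very definition of the braided commutator of braided derivations.

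The part I expect to be most delicate is the reduction step in the third identity: one has to make sure that $[\mathscr{L}^\mathcal{R}_X,\mathrm{i}^\mathcal{R}_Y]_\mathcal{R}$ really obeys the same $\mathcal{R}$-twisted graded Leibniz rule as $\mathrm{i}^\mathcal{R}_{[X,Y]_\mathcal{R}}$ — which is where the adjoint $H$-action on operators and the braiding built into the graded braided commutator genuinely intervene — so that agreement on the generating set $\mathcal{A}\cup\mathrm{d}\mathcal{A}$ forces equality on all of $\Omega^\bullet_\mathcal{R}(\mathcal{A})$. Everything else is routine sign-and-degree bookkeeping once the conventions ($\deg\mathrm{i}^\mathcal{R}_X=-k$, $\deg\mathscr{L}^\mathcal{R}_X=1-k$, $\deg\mathrm{d}=1$) are fixed.
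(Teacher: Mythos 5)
Your proof is correct and follows essentially the same route as the paper: the first two identities are obtained exactly as in the paper from $\mathscr{L}^\mathcal{R}_X=[\mathrm{i}^\mathcal{R}_X,\mathrm{d}]_\mathcal{R}$, $\mathrm{i}^\mathcal{R}_{X\wedge_\mathcal{R}Y}=\mathrm{i}^\mathcal{R}_X\mathrm{i}^\mathcal{R}_Y$ and the graded (braided) Leibniz rule of the commutator, and the third is likewise reduced to low degrees via the graded braided derivation property. The only cosmetic difference is that the paper verifies the third identity on an arbitrary $\omega\in\Omega^1_\mathcal{R}(\mathcal{A})$ by a direct computation, whereas you test it on the generators $a$ and $\mathrm{d}a$ using $[\mathscr{L}^\mathcal{R}_Z,\mathrm{d}]_\mathcal{R}=0$; both are valid and equivalent in substance.
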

\begin{proof}
By the definition of the braided Lie derivative
\begin{align*}
    \mathscr{L}^\mathcal{R}_a\omega
    =&\mathrm{i}^\mathcal{R}_a\mathrm{d}\omega
    -(-1)^{0\cdot 1}\mathrm{d}(\mathrm{i}^\mathcal{R}_a\omega)
    =a\wedge_\mathcal{R}\mathrm{d}\omega
    -((\mathrm{d}a)\wedge_\mathcal{R}\omega
    +(-1)^{0}a\wedge_\mathcal{R}\mathrm{d}\omega)
    =-(\mathrm{d}a)\wedge_\mathcal{R}\omega
\end{align*}
follows. From the graded braided Leibniz rule of the graded braided commutator
we obtain
\begin{align*}
    \mathscr{L}^\mathcal{R}_{X\wedge_\mathcal{R}Y}
    =&[\mathrm{i}^\mathcal{R}_{X\wedge_\mathcal{R}Y},\mathrm{d}]_\mathcal{R}
    =[\mathrm{i}^\mathcal{R}_{X}\mathrm{i}^\mathcal{R}_{Y},
    \mathrm{d}]_\mathcal{R}
    =\mathrm{i}^\mathcal{R}_{X}[\mathrm{i}^\mathcal{R}_{Y},
    \mathrm{d}]_\mathcal{R}
    +(-1)^{-1\cdot\ell}[\mathrm{i}^\mathcal{R}_{X},
    \mathrm{d}]_\mathcal{R}\mathrm{i}^\mathcal{R}_{Y}\\
    =&\mathrm{i}^\mathcal{R}_X\mathscr{L}^\mathcal{R}_Y
    +(-1)^\ell\mathscr{L}^\mathcal{R}_X\mathrm{i}^\mathcal{R}_Y.
\end{align*}
The missing formula trivially holds on braided differential forms of degree $0$,
while for $\omega\in\Omega^1_\mathcal{R}(\mathcal{A})$ one obtains
\begin{align*}
    [\mathscr{L}^\mathcal{R}_X,\mathrm{i}^\mathcal{R}_Y]_\mathcal{R}\omega
    =&\mathscr{L}^\mathcal{R}_X\mathrm{i}^\mathcal{R}_Y\omega
    -(-1)^{0\cdot 1}\mathrm{i}^\mathcal{R}_{\mathcal{R}_1^{-1}\rhd Y}
    \mathscr{L}^\mathcal{R}_{\mathcal{R}_2^{-1}\rhd X}\omega\\
    =&(\mathrm{i}^\mathcal{R}_X\mathrm{d}+\mathrm{d}\mathrm{i}^\mathcal{R}_X)
    \mathrm{i}^\mathcal{R}_Y\omega
    -\mathrm{i}^\mathcal{R}_{\mathcal{R}_1^{-1}\rhd Y}
    (\mathrm{i}^\mathcal{R}_{\mathcal{R}_2^{-1}\rhd X}\mathrm{d}
    +\mathrm{d}\mathrm{i}^\mathcal{R}_{\mathcal{R}_2^{-1}\rhd X})\omega\\
    =&X(\mathrm{i}^\mathcal{R}_Y\omega)+0
    +(\mathrm{d}((\mathcal{R}_1^{''-1}\mathcal{R}_1^{'-1})\rhd\omega))
    ((\mathcal{R}_2^{''-1}\mathcal{R}_1^{-1})\rhd Y,
    (\mathcal{R}_2^{'-1}\mathcal{R}_2^{-1})\rhd X)\\
    &-(\mathcal{R}_1^{-1}\rhd Y)
    (\mathrm{i}^\mathcal{R}_{\mathcal{R}_2^{-1}\rhd X}\omega)\\
    =&\mathrm{i}^\mathcal{R}_{[X,Y]}\omega
\end{align*}
for all $X,Y\in\mathfrak{X}^1_\mathcal{R}(\mathcal{A})$. Since
$[\mathscr{L}^\mathcal{R}_X,\mathrm{i}^\mathcal{R}_Y]_\mathcal{R}$
is a graded braided derivation this is all we have to prove.
\end{proof}
We are prepared to prove the main theorem of this section.
It assigns to any braided commutative left $H$-module algebra
$\mathcal{A}$ a noncommutative Cartan calculus, which we call
\textit{the braided Cartan calculus} of $\mathcal{A}$ in the
following.
\begin{theorem}[Braided Cartan calculus]
Let $\mathcal{A}$ be a braided commutative left $H$-module algebra
and consider the braided differential forms 
$(\Omega^\bullet_\mathcal{R}(\mathcal{A}),\wedge_\mathcal{R},\mathrm{d})$ 
and braided multivector fields
$(\mathfrak{X}^\bullet_\mathcal{R}(\mathcal{A}),\wedge_\mathcal{R},
\llbracket\cdot,\cdot\rrbracket_\mathcal{R})$ on $\mathcal{A}$.
The homogeneous maps
\begin{equation}
    \mathscr{L}^\mathcal{R}_X\colon\Omega^\bullet_\mathcal{R}(\mathcal{A})
    \rightarrow\Omega^{\bullet-(k-1)}_\mathcal{R}(\mathcal{A})
    \text{ and }
    \mathrm{i}^\mathcal{R}_X\colon\Omega^\bullet_\mathcal{R}(\mathcal{A})
    \rightarrow\Omega^{\bullet-k}_\mathcal{R}(\mathcal{A}),
\end{equation}
where $X\in\mathfrak{X}^k_\mathcal{R}(\mathcal{A})$,
and $\mathrm{d}\colon\Omega_\mathcal{R}^\bullet(\mathcal{A})
\rightarrow\Omega_\mathcal{R}^{\bullet+1}(\mathcal{A})$
satisfy
\begin{equation}
\begin{split}
        [\mathscr{L}^\mathcal{R}_X,\mathscr{L}^\mathcal{R}_Y]_\mathcal{R}
        =&\mathscr{L}^\mathcal{R}_{\llbracket X,Y\rrbracket_\mathcal{R}},\\
        [\mathscr{L}^\mathcal{R}_X,\mathrm{i}^\mathcal{R}_Y]_\mathcal{R}
        =&\mathrm{i}^\mathcal{R}_{\llbracket X,Y\rrbracket_\mathcal{R}},\\
        [\mathscr{L}^\mathcal{R}_X,\mathrm{d}]_\mathcal{R}=&0,
\end{split}
\hspace{1cm}
\begin{split}
        [\mathrm{i}^\mathcal{R}_X,\mathrm{i}^\mathcal{R}_Y]_\mathcal{R}=&0,\\
        [\mathrm{i}^\mathcal{R}_X,\mathrm{d}]_\mathcal{R}
        =&\mathscr{L}^\mathcal{R}_X,\\
        [\mathrm{d},\mathrm{d}]_\mathcal{R}=&0,
\end{split}
\end{equation}
for all $X,Y\in\mathfrak{X}^\bullet_\mathcal{R}(\mathcal{A})$.
\end{theorem}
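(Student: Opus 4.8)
The plan is to reduce all six braided Cartan relations to a small number of base cases, exploiting that every operator in sight ($\mathscr{L}^\mathcal{R}_X$, $\mathrm{i}^\mathcal{R}_X$, $\mathrm{d}$) is $H$-equivariant, so that all graded braided commutators appearing in the statement coincide with ordinary graded commutators, and the graded Jacobi identity for ordinary graded commutators of endomorphisms is available. First I would record the multiplicativity properties already established: $\mathrm{i}^\mathcal{R}_{X\wedge_\mathcal{R}Y}=\mathrm{i}^\mathcal{R}_X\mathrm{i}^\mathcal{R}_Y$ by definition, $\mathscr{L}^\mathcal{R}_{X\wedge_\mathcal{R}Y}=\mathrm{i}^\mathcal{R}_X\mathscr{L}^\mathcal{R}_Y+(-1)^\ell\mathscr{L}^\mathcal{R}_X\mathrm{i}^\mathcal{R}_Y$ from Lemma~\ref{lemma02}, and the graded braided Leibniz rule for $\llbracket\cdot,\cdot\rrbracket_\mathcal{R}$ with respect to $\wedge_\mathcal{R}$. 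Together with the fact that a graded braided commutator of equivariant maps is a graded braided derivation whenever each argument is, these let me promote any identity verified for $X,Y\in\mathfrak{X}^1_\mathcal{R}(\mathcal{A})$ and for $a\in\mathcal{A}=\mathfrak{X}^0_\mathcal{R}(\mathcal{A})$ to all of $\mathfrak{X}^\bullet_\mathcal{R}(\mathcal{A})$ by induction on the wedge-degree.

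Next I would dispatch the easy relations. $[\mathrm{i}^\mathcal{R}_X,\mathrm{d}]_\mathcal{R}=\mathscr{L}^\mathcal{R}_X$ is the definition of the braided Lie derivative, nothing to prove. $[\mathrm{d},\mathrm{d}]_\mathcal{R}=2\mathrm{d}^2=0$ follows from $\mathrm{d}^2=0$, which is quoted from \cite{ThomasPhDThesis} after checking it on degrees $<2$; I would just invoke it. For $[\mathscr{L}^\mathcal{R}_X,\mathrm{d}]_\mathcal{R}=0$: using $\mathscr{L}^\mathcal{R}_X=\mathrm{i}^\mathcal{R}_X\mathrm{d}+(-1)^{?}\mathrm{d}\,\mathrm{i}^\mathcal{R}_X$ (the unpacked definition, with the sign dictated by $|\mathrm{i}^\mathcal{R}_X|=-k$), the graded commutator with $\mathrm{d}$ telescopes using $\mathrm{d}^2=0$ and the graded Jacobi identity $[[\mathrm{i}^\mathcal{R}_X,\mathrm{d}],\mathrm{d}]=[\mathrm{i}^\mathcal{R}_X,[\mathrm{d},\mathrm{d}]]\pm[[\mathrm{i}^\mathcal{R}_X,\mathrm{d}],\mathrm{d}]$ — i.e.\ $[\mathscr{L}^\mathcal{R}_X,\mathrm{d}]$ is, up to a sign, equal to its own negative, hence zero. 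Then $[\mathrm{i}^\mathcal{R}_X,\mathrm{i}^\mathcal{R}_Y]_\mathcal{R}=0$: by equivariance this is the ordinary graded commutator $\mathrm{i}^\mathcal{R}_X\mathrm{i}^\mathcal{R}_Y-(-1)^{k\ell}\mathrm{i}^\mathcal{R}_Y\mathrm{i}^\mathcal{R}_X$; for $k=\ell=1$ it vanishes because $\mathrm{i}^\mathcal{R}_X$ is a graded braided derivation of $\wedge_\mathcal{R}$ and the braided exterior algebra is generated in degree $0$ and by exact $1$-forms $\mathrm{d}a$, on which $\mathrm{i}^\mathcal{R}_X\mathrm{i}^\mathcal{R}_Y(\mathrm{d}a)=\mathrm{i}^\mathcal{R}_X(Y(a))=0$ and similarly with $X,Y$ swapped — a direct check using $\mathrm{i}^\mathcal{R}_X\mathrm{d}a=X(a)\in\mathcal{A}$ and that $\mathrm{i}^\mathcal{R}_X$ kills $\mathcal{A}$. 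The general case then follows from $\mathrm{i}^\mathcal{R}_{X\wedge_\mathcal{R}Y}=\mathrm{i}^\mathcal{R}_X\mathrm{i}^\mathcal{R}_Y$ and a bilinear induction.

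The substantive relation is $[\mathscr{L}^\mathcal{R}_X,\mathrm{i}^\mathcal{R}_Y]_\mathcal{R}=\mathrm{i}^\mathcal{R}_{\llbracket X,Y\rrbracket_\mathcal{R}}$. For $X,Y\in\mathfrak{X}^1_\mathcal{R}(\mathcal{A})$ this is exactly the last assertion of Lemma~\ref{lemma02} (recalling $\llbracket X,Y\rrbracket_\mathcal{R}=[X,Y]_\mathcal{R}$ on degree-$1$ elements, eq.~(\ref{eq09})), and for $Y=a\in\mathcal{A}$ both sides applied to $\omega$ give, via $\mathscr{L}^\mathcal{R}_a=-(\mathrm{d}a)\wedge_\mathcal{R}-$ and $\mathrm{i}^\mathcal{R}_{X(a)}$ on one side against the derivation property on the other, the same thing — a short computation. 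To bootstrap in $Y$, write $Y=Y_1\wedge_\mathcal{R}Y'$, use $\mathrm{i}^\mathcal{R}_Y=\mathrm{i}^\mathcal{R}_{Y_1}\mathrm{i}^\mathcal{R}_{Y'}$, apply the graded braided Jacobi identity for graded commutators of equivariant operators to $[\mathscr{L}^\mathcal{R}_X,\mathrm{i}^\mathcal{R}_{Y_1}\mathrm{i}^\mathcal{R}_{Y'}]$, and match the result against $\mathrm{i}^\mathcal{R}$ of the graded braided Leibniz expansion of $\llbracket X,Y_1\wedge_\mathcal{R}Y'\rrbracket_\mathcal{R}$; the braiding factors $\mathcal{R}^{-1}_1\rhd(-)\otimes\mathcal{R}^{-1}_2\rhd(-)$ produced on the two sides are forced to agree because all maps involved are $H$-equivariant, so the $\mathcal{R}$-legs can be reabsorbed. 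Once this holds for all $Y$ and $X\in\mathfrak{X}^1_\mathcal{R}$, the same induction in $X$ (using $\mathscr{L}^\mathcal{R}_{X_1\wedge_\mathcal{R}X'}=\mathrm{i}^\mathcal{R}_{X_1}\mathscr{L}^\mathcal{R}_{X'}+(-1)^{\bullet}\mathscr{L}^\mathcal{R}_{X_1}\mathrm{i}^\mathcal{R}_{X'}$) closes the general case. Finally $[\mathscr{L}^\mathcal{R}_X,\mathscr{L}^\mathcal{R}_Y]_\mathcal{R}=\mathscr{L}^\mathcal{R}_{\llbracket X,Y\rrbracket_\mathcal{R}}$ is deduced from the one just proved: apply the graded Jacobi identity to $[[\mathrm{i}^\mathcal{R}_X,\mathrm{d}],\mathscr{L}^\mathcal{R}_Y]=[\mathscr{L}^\mathcal{R}_X,\mathscr{L}^\mathcal{R}_Y]$, expand using $[\mathscr{L}^\mathcal{R}_Y,\mathrm{d}]=0$ and $[\mathscr{L}^\mathcal{R}_Y,\mathrm{i}^\mathcal{R}_X]=\pm\,\mathrm{i}^\mathcal{R}_{\llbracket Y,X\rrbracket_\mathcal{R}}$, and then recombine via $[\mathrm{i}^\mathcal{R}_{\llbracket X,Y\rrbracket_\mathcal{R}},\mathrm{d}]=\mathscr{L}^\mathcal{R}_{\llbracket X,Y\rrbracket_\mathcal{R}}$, using graded braided skew-symmetry of $\llbracket\cdot,\cdot\rrbracket_\mathcal{R}$ to line up the signs and $\mathcal{R}$-legs.

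\textbf{Main obstacle.} The bookkeeping of the universal-$\mathcal{R}$-matrix legs in the inductive steps for $[\mathscr{L}^\mathcal{R}_X,\mathrm{i}^\mathcal{R}_Y]_\mathcal{R}$ and $[\mathscr{L}^\mathcal{R}_X,\mathscr{L}^\mathcal{R}_Y]_\mathcal{R}$ is the delicate part: one must check that the braiding insertions generated by the graded braided Leibniz rule for $\llbracket\cdot,\cdot\rrbracket_\mathcal{R}$ match those generated by iterating the graded braided commutator, which ultimately rests on the hexagon relations and the quantum Yang–Baxter equation for $\mathcal{R}$ together with the $H$-equivariance of $\mathrm{i}^\mathcal{R}$, $\mathscr{L}^\mathcal{R}$ and $\mathrm{d}$. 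Since every operator here is equivariant, I expect these $\mathcal{R}$-legs to collapse cleanly and the verification to be formal once set up carefully; the detailed multi-index computations I would relegate to \cite{ThomasPhDThesis}, as the paper does for the parallel facts $\mathrm{d}^2=0$ and the well-definedness of the ideals.
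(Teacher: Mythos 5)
Your proposal is correct and follows essentially the same route as the paper: the easy relations are dispatched from the definition of $\mathscr{L}^\mathcal{R}$, from $\mathrm{d}^2=0$ and from the graded braided Jacobi identity (with equivariance of $\mathrm{d}$ collapsing the braiding), and the two substantive relations are obtained by the same double induction on wedge degree anchored in Lemma~\ref{lemma02} and the graded braided Leibniz rules. The only cosmetic difference is that the paper proves $[\mathrm{i}^\mathcal{R}_X,\mathrm{i}^\mathcal{R}_Y]_\mathcal{R}=0$ in one line from $\mathrm{i}^\mathcal{R}_{X\wedge_\mathcal{R}Y}=\mathrm{i}^\mathcal{R}_X\mathrm{i}^\mathcal{R}_Y$ together with the graded braided commutativity of $\wedge_\mathcal{R}$, rather than by your check on generators.
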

\begin{proof}
We are going to prove the above formulas in reversed order. Since
$\mathrm{d}$ is a differential it follows that
$[\mathrm{d},\mathrm{d}]_\mathcal{R}=2\mathrm{d}^2=0$. Recall that
there is no braiding appearing here since $\mathrm{d}$ is equivariant.
By the definition of the braided Lie derivative 
$[\mathrm{i}^\mathcal{R}_X,\mathrm{d}]_\mathcal{R}
=\mathscr{L}^\mathcal{R}_X$ holds for all $X\in\mathfrak{X}^\bullet_\mathcal{R}
(\mathcal{A})$. Let $X\in\mathfrak{X}^k_\mathcal{R}(\mathcal{A})$
and $Y\in\mathfrak{X}^\ell_\mathcal{R}(\mathcal{A})$. Then
$$
[\mathrm{i}^\mathcal{R}_X,\mathrm{i}^\mathcal{R}_Y]_\mathcal{R}
=\mathrm{i}^\mathcal{R}_X\mathrm{i}^\mathcal{R}_Y
-(-1)^{k\ell}\mathrm{i}^\mathcal{R}_{\mathcal{R}_1^{-1}\rhd Y}
\mathrm{i}^\mathcal{R}_{\mathcal{R}_2^{-1}\rhd X}
=\mathrm{i}^\mathcal{R}_{X\wedge_\mathcal{R}Y
-(-1)^{k\ell}(\mathcal{R}_1^{-1}\rhd Y)
\wedge_\mathcal{R}(\mathcal{R}_2^{-1}\rhd X)}
=0
$$
follows from the definition of
$\mathrm{i}^\mathcal{R}_{X\wedge_\mathcal{R}Y}
=\mathrm{i}^\mathcal{R}_X\mathrm{i}^\mathcal{R}_Y$.
Using the graded braided Jacobi identity of the graded braided commutator we obtain
\begin{align*}
    [[\mathrm{i}^\mathcal{R}_X,\mathrm{d}]_\mathcal{R},\mathrm{d}]_\mathcal{R}
    =[\mathrm{i}^\mathcal{R}_X,[\mathrm{d},\mathrm{d}]_\mathcal{R}]_\mathcal{R}
    +(-1)^{1\cdot 1}
    [[\mathrm{i}^\mathcal{R}_X,\mathrm{d}]_\mathcal{R},\mathrm{d}]_\mathcal{R}
    =-[[\mathrm{i}^\mathcal{R}_X,
    \mathrm{d}]_\mathcal{R},\mathrm{d}]_\mathcal{R}
\end{align*}
for all $X\in\mathfrak{X}^\bullet_\mathcal{R}(\mathcal{A})$, which implies
$[\mathscr{L}^\mathcal{R}_X,\mathrm{d}]_\mathcal{R}=0$.
Again, there is no braiding appearing since $\mathrm{d}$ is equivariant.
Recall that the braided Schouten-Nijenhuis bracket of a homogeneous element
$Y=Y_1\wedge_\mathcal{R}\cdots\wedge_\mathcal{R}Y_\ell\in
\mathfrak{X}^\ell_\mathcal{R}(\mathcal{A})$ with $a\in\mathcal{A}$ and
$X\in\mathfrak{X}^1_\mathcal{R}(\mathcal{A})$ read
$$
\llbracket a,Y\rrbracket_\mathcal{R}
=\sum_{j=1}^\ell(-1)^{j+1}(\mathcal{R}_{1(1)}^{-1}\rhd Y_1)\wedge_\mathcal{R}
\cdots\wedge_\mathcal{R}(\mathcal{R}_{1(j-1)}^{-1}\rhd Y_{j-1})
\wedge_\mathcal{R}\llbracket\mathcal{R}_2^{-1}\rhd a,Y_j\rrbracket_\mathcal{R}
\wedge_\mathcal{R}Y_{j+1}\wedge_\mathcal{R}\cdots\wedge_\mathcal{R}Y_\ell
$$
and
$$
\llbracket X,Y\rrbracket_\mathcal{R}
=\sum_{j=1}^\ell(\mathcal{R}_{1(1)}^{-1}\rhd Y_1)\wedge_\mathcal{R}
\cdots\wedge_\mathcal{R}(\mathcal{R}_{1(j-1)}^{-1}\rhd Y_{j-1})
\wedge_\mathcal{R}[\mathcal{R}_2^{-1}\rhd X,Y_j]_\mathcal{R}
\wedge_\mathcal{R}Y_{j+1}\wedge_\mathcal{R}\cdots\wedge_\mathcal{R}Y_\ell,
$$
respectively. If $\ell=1$ we obtain
\begin{align*}
    [\mathscr{L}^\mathcal{R}_a,\mathrm{i}^\mathcal{R}_Y]_\mathcal{R}\omega
    =&(\mathscr{L}^\mathcal{R}_a\mathrm{i}^\mathcal{R}_Y
    -(-1)^{(-1)\cdot 1}\mathrm{i}^\mathcal{R}_{\mathcal{R}_1^{-1}\rhd Y}
    \mathscr{L}^\mathcal{R}_{\mathcal{R}_2^{-1}\rhd a})\omega\\
    =&-\mathrm{d}a\wedge_\mathcal{R}\mathrm{i}^\mathcal{R}_Y\omega
    -\mathrm{i}^\mathcal{R}_{\mathcal{R}_1^{-1}\rhd Y}(
    \mathrm{d}(\mathcal{R}_2^{-1}\rhd a)\wedge_\mathcal{R}\omega)\\
    =&-\mathrm{d}a\wedge_\mathcal{R}\mathrm{i}^\mathcal{R}_Y\omega
    -(\mathcal{R}_1^{-1}\rhd Y)(\mathcal{R}_2^{-1}\rhd a)\cdot\omega
    +\mathrm{d}((\mathcal{R}_1^{'-1}\mathcal{R}_2^{-1})\rhd a)\wedge_\mathcal{R}
    \mathrm{i}^\mathcal{R}_{(\mathcal{R}_2^{'-1}\mathcal{R}_1^{-1})\rhd Y}\omega\\
    =&\mathrm{i}^\mathcal{R}_{\llbracket a,Y\rrbracket_\mathcal{R}}\omega
\end{align*}
for all $\omega\in\Omega^\bullet_\mathcal{R}(\mathcal{A})$
by Lemma~\ref{lemma02}. Using the graded braided Leibniz rule
this extends to any $\ell>1$, namely
\begin{align*}
    [\mathscr{L}^\mathcal{R}_a,
    \mathrm{i}^\mathcal{R}_{Y_1\wedge_\mathcal{R}\cdots
    \wedge_\mathcal{R}Y_\ell}]_\mathcal{R}
    =&[\mathscr{L}^\mathcal{R}_a,\mathrm{i}^\mathcal{R}_{Y_1}]_\mathcal{R}
    \mathrm{i}^\mathcal{R}_{Y_2\wedge_\mathcal{R}\cdots\wedge_\mathcal{R}Y_\ell}
    +(-1)^{(-1)\cdot 1}\mathrm{i}^\mathcal{R}_{\mathcal{R}_1^{-1}\rhd Y_1}
    [\mathscr{L}^\mathcal{R}_{\mathcal{R}_2^{-1}\rhd a},
    \mathrm{i}^\mathcal{R}_{Y_2\wedge_\mathcal{R}\cdots\wedge_\mathcal{R}Y_\ell}]\\
    =&\mathrm{i}^\mathcal{R}_{\llbracket a,Y_1\rrbracket_\mathcal{R}
    \wedge_\mathcal{R}Y_2\wedge_\mathcal{R}\cdots\wedge_\mathcal{R}Y_\ell}
    -\mathrm{i}^\mathcal{R}_{\mathcal{R}_1^{-1}\rhd Y_1}
    [\mathscr{L}^\mathcal{R}_{\mathcal{R}_2^{-1}\rhd a},
    \mathrm{i}^\mathcal{R}_{Y_2\wedge_\mathcal{R}\cdots\wedge_\mathcal{R}Y_\ell}]\\
    =&\cdots
    =\mathrm{i}^\mathcal{R}_{\llbracket a,Y\rrbracket_\mathcal{R}}.
\end{align*}
Again by Lemma~\ref{lemma02} we know that $[\mathscr{L}^\mathcal{R}_X,
\mathrm{i}^\mathcal{R}_Y]_\mathcal{R}=\mathrm{i}^\mathcal{R}_{[X,Y]_\mathcal{R}}$
holds for $\ell=1$ and $X\in\mathfrak{X}^1_\mathcal{R}(\mathcal{A})$.
Using the graded braided Leibniz rule this extends to all 
$Y\in\mathfrak{X}^\bullet_\mathcal{R}(\mathcal{A})$.
Assume now that $[\mathscr{L}^\mathcal{R}_X,\mathrm{i}^\mathcal{R}_Z]_\mathcal{R}
=\mathrm{i}^\mathcal{R}_{\llbracket X,Z\rrbracket_\mathcal{R}}$
holds for all $X\in\mathfrak{X}^k_\mathcal{R}(\mathcal{A})$ and
$Z\in\mathfrak{X}^\bullet_\mathcal{R}(\mathcal{A})$ for a fixed
$k>0$. Then, for all $X\in\mathfrak{X}^k_\mathcal{R}(\mathcal{A})$,
$Y\in\mathfrak{X}^1_\mathcal{R}(\mathcal{A})$ and $Z\in
\mathfrak{X}^m_\mathcal{R}(\mathcal{A})$ it follows that
\begin{align*}
    [\mathscr{L}^\mathcal{R}_{X\wedge_\mathcal{R}Y},\mathrm{i}^\mathcal{R}_Z
    ]_\mathcal{R}
    =&[\mathrm{i}^\mathcal{R}_X\mathscr{L}^\mathcal{R}_Y
    -\mathscr{L}^\mathcal{R}_X\mathrm{i}^\mathcal{R}_Y,
    \mathrm{i}^\mathcal{R}_Z]_\mathcal{R}\\
    =&\mathrm{i}^\mathcal{R}_X[\mathscr{L}^\mathcal{R}_Y,
    \mathrm{i}^\mathcal{R}_Z]_\mathcal{R}
    +[\mathrm{i}^\mathcal{R}_X,
    \mathrm{i}^\mathcal{R}_{\mathcal{R}_1^{-1}\rhd Z}]_\mathcal{R}
    \mathscr{L}^\mathcal{R}_{\mathcal{R}_2^{-1}\rhd Y}\\
    &-\mathscr{L}^\mathcal{R}_X[\mathrm{i}^\mathcal{R}_Y,
    \mathrm{i}^\mathcal{R}_Z]_\mathcal{R}
    -(-1)^m[\mathscr{L}^\mathcal{R}_X,
    \mathrm{i}^\mathcal{R}_{\mathcal{R}_1^{-1}\rhd Z}]_\mathcal{R}
    \mathrm{i}^\mathcal{R}_{\mathcal{R}_2^{-1}\rhd Y}\\
    =&\mathrm{i}^\mathcal{R}_X[\mathscr{L}^\mathcal{R}_Y,
    \mathrm{i}^\mathcal{R}_Z]_\mathcal{R}
    -(-1)^m[\mathscr{L}^\mathcal{R}_X,
    \mathrm{i}^\mathcal{R}_{\mathcal{R}_1^{-1}\rhd Z}]_\mathcal{R}
    \mathrm{i}^\mathcal{R}_{\mathcal{R}_2^{-1}\rhd Y}\\
    =&\mathrm{i}^\mathcal{R}_{X}\mathrm{i}^\mathcal{R}_{
    \llbracket Y,Z\rrbracket_\mathcal{R}}
    -(-1)^{m}\mathrm{i}^\mathcal{R}_{\llbracket X,
    \mathcal{R}_1^{-1}\rhd Z\rrbracket_\mathcal{R}}
    \mathrm{i}^\mathcal{R}_{(\mathcal{R}_2^{-1}\rhd Y)}\\
    =&\mathrm{i}^\mathcal{R}_{X\wedge_\mathcal{R}
    \llbracket Y,Z\rrbracket_\mathcal{R}}
    +(-1)^{m-1}\mathrm{i}^\mathcal{R}_{\llbracket X,
    \mathcal{R}_1^{-1}\rhd Z\rrbracket_\mathcal{R}\wedge_\mathcal{R}
    (\mathcal{R}_2^{-1}\rhd Y)}\\
    =&\mathrm{i}^\mathcal{R}_{\llbracket X\wedge_\mathcal{R}Y,
    Z\rrbracket_\mathcal{R}}
\end{align*}
for all $X\in\mathfrak{X}^k_\mathcal{R}(\mathcal{A})$,
$Y\in\mathfrak{X}^1_\mathcal{R}(\mathcal{A})$ and
$Z\in\mathfrak{X}^m_\mathcal{R}(\mathcal{A})$
using Lemma~\ref{lemma02}. By induction
$[\mathscr{L}^\mathcal{R}_X,\mathrm{i}^\mathcal{R}_Y]_\mathcal{R}
=\mathrm{i}^\mathcal{R}_{\llbracket X,Y\rrbracket_\mathcal{R}}$
for all $X,Y\in\mathfrak{X}^\bullet_\mathcal{R}(\mathcal{A})$.
The remaining formula is verified via
\begin{align*}
    [\mathscr{L}^\mathcal{R}_X,\mathscr{L}^\mathcal{R}_Y]_\mathcal{R}
    =&[\mathscr{L}^\mathcal{R}_X,[\mathrm{i}^\mathcal{R}_Y,
    \mathrm{d}]_\mathcal{R}]_\mathcal{R}\\
    =&[[\mathscr{L}^\mathcal{R}_X,\mathrm{i}^\mathcal{R}_Y]_\mathcal{R},
    \mathrm{d}]_\mathcal{R}
    +(-1)^{(k-1)\ell}[\mathrm{i}^\mathcal{R}_{\mathcal{R}_1^{-1}\rhd Y},
    [\mathscr{L}^\mathcal{R}_{\mathcal{R}_2^{-1}\rhd X},
    \mathrm{d}]_\mathcal{R}]_\mathcal{R}\\
    =&[\mathrm{i}^\mathcal{R}_{\llbracket X,Y\rrbracket_\mathcal{R}},
    \mathrm{d}]_\mathcal{R}+0\\
    =&\mathscr{L}^\mathcal{R}_{\llbracket X,Y\rrbracket_\mathcal{R}}
\end{align*}
for all $X\in\mathfrak{X}^k_\mathcal{R}(\mathcal{A})$ and
$Y\in\mathfrak{X}^\ell_\mathcal{R}(\mathcal{A})$.
This concludes the proof of the theorem.
\end{proof}
In particular, the Cartan calculus on a commutative algebra is a braided
Cartan calculus with respect to the trivial triangular structure and a
(possibly trivial) action of a cocommutative Hopf algebra. We discuss a
further class of examples which is to some extent already present in the
literature, see \cite{Aschieri2006} for $\mathcal{R}=1\otimes 1$ and
\cite{Schenkel2016}~Proposition~3.22. for the first order calculus in the case
of a quasi-triangular Hopf algebra and non-associative algebras.
Fix a triangular Hopf algebra $(H,\mathcal{R})$, a braided commutative
left $H$-module algebra $\mathcal{A}$ and a Drinfel'd twist
$\mathcal{F}$ on $H$ in the following. Recall from Theorem~\ref{thm02}
that the Drinfel'd functor
\begin{equation}
    \mathrm{Drin}_\mathcal{F}\colon
    ({}_\mathcal{A}^H\mathcal{M}_\mathcal{A}^\mathcal{R},\otimes_\mathcal{A},
    c^\mathcal{R})\rightarrow
    ({}_{\mathcal{A}_\mathcal{F}}^{H_\mathcal{F}}\mathcal{M}_{
    \mathcal{A}_\mathcal{F}}^{\mathcal{R}_\mathcal{F}},\otimes_{\mathcal{A}_\mathcal{F}},
    c^{\mathcal{R}_\mathcal{F}})
\end{equation}
is a braided monoidal equivalence of braided monoidal categories
with braided monoidal natural transformation given on objects
$\mathcal{M}$ and $\mathcal{M}'$ of
${}_\mathcal{A}^H\mathcal{M}^\mathcal{R}_\mathcal{A}$ by
$$
\varphi_{\mathcal{M},\mathcal{M}'}\colon
\mathcal{M}_\mathcal{F}\otimes_{\mathcal{A}_\mathcal{F}}\mathcal{M}'_\mathcal{F}
\ni(m\otimes_{\mathcal{A}_\mathcal{F}}m')
\mapsto(\mathcal{F}_1^{-1}\rhd m)\otimes_\mathcal{A}(\mathcal{F}_2^{-1}\rhd m')
\in(\mathcal{M}\otimes_\mathcal{A}\mathcal{M}')_\mathcal{F}.
$$
For
$X\in\mathrm{Der}_\mathcal{R}(\mathcal{A})_\mathcal{F}$ we define a $\Bbbk$-linear map
$X^\mathcal{F}\colon\mathcal{A}\rightarrow\mathcal{A}$ by
\begin{equation}\label{eq07}
    X^\mathcal{F}(a)=(\mathcal{F}_1^{-1}\rhd X)(\mathcal{F}_2^{-1}\rhd a)
    \text{ for all }a\in\mathcal{A}.
\end{equation}
This declares an isomorphism 
$(\mathfrak{X}^1_\mathcal{R}(\mathcal{A}))_\mathcal{F}
\ni X\mapsto X^\mathcal{F}
\in\mathfrak{X}^1_{\mathcal{R}_\mathcal{F}}(\mathcal{A}_\mathcal{F})$
of $H_\mathcal{F}$-equivariant braided symmetric
$\mathcal{A}_\mathcal{F}$-modules. In particular,
\begin{equation}\label{eq08}
    \xi\rhd X^\mathcal{F}=(\xi\rhd X)^\mathcal{F},~
    a\cdot_{\mathcal{R}_\mathcal{F}}X^\mathcal{F}
    =(a\cdot_\mathcal{F}X)^\mathcal{F},~
    X^\mathcal{F}\cdot_{\mathcal{R}_\mathcal{F}}a
    =(X\cdot_\mathcal{F}a)^\mathcal{F}
\end{equation}
for all $\xi\in H$, $a\in\mathcal{A}$ and
$X\in\mathfrak{X}^1_\mathcal{R}(\mathcal{A})_\mathcal{F}$, where we denoted the
$\mathcal{A}_\mathcal{F}$-module actions on 
$\mathfrak{X}^1_{\mathcal{R}_\mathcal{F}}(\mathcal{A}_\mathcal{F})$ by
$\cdot_{\mathcal{R}_\mathcal{F}}$.
We define the \textit{twisted wedge product}
\begin{equation}
    \wedge_\mathcal{F}
    =\mathrm{Drin}_\mathcal{F}(\wedge_\mathcal{R})
    \circ\varphi_{\mathfrak{X}^1_\mathcal{R}(\mathcal{A}),
    \mathfrak{X}^1_\mathcal{R}(\mathcal{A})}\colon
    \mathfrak{X}^1_\mathcal{R}(\mathcal{A})_\mathcal{F}
    \otimes_{\mathcal{A}_\mathcal{F}}
    \mathfrak{X}^1_\mathcal{R}(\mathcal{A})_\mathcal{F}
    \rightarrow\mathfrak{X}^2_\mathcal{R}(\mathcal{A})_\mathcal{F}
\end{equation}
and extend the isomorphism (\ref{eq07})
to higher wedge powers as a homomorphism of the twisted wedge product, i.e.
\begin{equation}\label{eq10}
    (X\wedge_\mathcal{F}Y)^\mathcal{F}
    =X^\mathcal{F}\wedge_{\mathcal{R}_\mathcal{F}}Y^\mathcal{F}
\end{equation}
for all $X,Y\in\mathfrak{X}^\bullet_\mathcal{R}(\mathcal{A})_\mathcal{F}$,
where $\wedge_\mathcal{F}=\mathrm{Drin}_\mathcal{F}(\wedge_\mathcal{R})
\circ\varphi_{\mathfrak{X}^\bullet_\mathcal{R}(\mathcal{A}),
\mathfrak{X}^\bullet_\mathcal{R}(\mathcal{A})}$.
Inductively this leads to an isomorphism 
$
\mathfrak{X}^\bullet_\mathcal{R}(\mathcal{A})_\mathcal{F}\rightarrow
\mathfrak{X}^\bullet_{\mathcal{R}_\mathcal{F}}(\mathcal{A}_\mathcal{F})
$
of $H_\mathcal{F}$-equivariant braided symmetric
$\mathcal{A}_\mathcal{F}$-bimodules.
Also the \textit{twisted Schouten-Nijenhuis bracket}
\begin{equation}
    \llbracket\cdot,\cdot\rrbracket_\mathcal{F}\colon
    \mathrm{Drin}_\mathcal{F}(\llbracket\cdot,\cdot\rrbracket_\mathcal{R})\circ
    \varphi_{\mathfrak{X}^\bullet_\mathcal{R}(\mathcal{A}),
    \mathfrak{X}^\bullet_\mathcal{R}(\mathcal{A})}
    \colon\mathfrak{X}^\bullet_\mathcal{R}(\mathcal{A})_\mathcal{F}
    \otimes_{\mathcal{A}_\mathcal{F}}
    \mathfrak{X}^\bullet_\mathcal{R}(\mathcal{A})_\mathcal{F}
    \rightarrow\mathfrak{X}^\bullet_\mathcal{R}(\mathcal{A})_\mathcal{F}
\end{equation}
can be defined.
On elements
$X,Y\in\mathfrak{X}^\bullet_\mathcal{R}(\mathcal{A})_\mathcal{F}$ the twisted
operations read
\begin{equation}
    X\wedge_\mathcal{F}Y
    =(\mathcal{F}_1^{-1}\rhd X)\wedge_\mathcal{R}(\mathcal{F}_2^{-1}\rhd Y)
    \text{ and }
    \llbracket X,Y\rrbracket_\mathcal{F}
    =\llbracket\mathcal{F}_1^{-1}\rhd X,
    \mathcal{F}_2^{-1}\rhd Y\rrbracket_\mathcal{R},
\end{equation}
respectively.
Similarly we define an isomorphism
${}^\mathcal{F}\colon\Omega^\bullet_\mathcal{R}(\mathcal{A})_\mathcal{F}\rightarrow
\Omega^\bullet_{\mathcal{R}_\mathcal{F}}(\mathcal{A}_\mathcal{F})$ of
$H_\mathcal{F}$-equivariant braided symmetric $\mathcal{A}_\mathcal{F}$-bimodules,
the \textit{twisted Lie derivative} and \textit{twisted insertion}
\begin{equation}
\begin{split}
    \mathscr{L}^\mathcal{F}&\colon
    \mathfrak{X}^\bullet_\mathcal{R}(\mathcal{A})_\mathcal{F}
    \otimes_\mathcal{F}\Omega^\bullet_\mathcal{R}(\mathcal{A})_\mathcal{F}
    \rightarrow\Omega^\bullet_\mathcal{R}(\mathcal{A})_\mathcal{F},~\\
    \mathrm{i}^\mathcal{F}&\colon
    \mathfrak{X}^\bullet_\mathcal{R}(\mathcal{A})_\mathcal{F}
    \otimes_\mathcal{F}\Omega^\bullet_\mathcal{R}(\mathcal{A})_\mathcal{F}
\rightarrow\Omega^\bullet_\mathcal{R}(\mathcal{A})_\mathcal{F},
\end{split}
\end{equation}
while the de Rham differential becomes
$\mathrm{d}\colon\Omega^\bullet_\mathcal{R}(\mathcal{A})_\mathcal{F}
\rightarrow\Omega^{\bullet+1}_\mathcal{R}(\mathcal{A})_\mathcal{F}$ after utilizing
the Drinfel'd functor (see \cite{ThomasPhDThesis} for more details). On elements $X\in\mathfrak{X}^\bullet_\mathcal{R}
(\mathcal{A})_\mathcal{F}$ and $\omega\in\Omega^\bullet_\mathcal{R}
(\mathcal{A})_\mathcal{F}$ 
we obtain
\begin{equation}
    \mathscr{L}^\mathcal{F}_X\omega
    =\mathscr{L}^\mathcal{R}_{\mathcal{F}_1^{-1}\rhd X}
    (\mathcal{F}_2^{-1}\rhd\omega)
    \text{ and }
    \mathrm{i}^\mathcal{F}_X\omega
    =\mathrm{i}^\mathcal{R}_{\mathcal{F}_1^{-1}\rhd X}
    (\mathcal{F}_2^{-1}\rhd\omega),
\end{equation}
while the de Rham differential remains undeformed. We refer to
\begin{equation}
    (\Omega^\bullet_\mathcal{R}(\mathcal{A})_\mathcal{F},\wedge_\mathcal{F},
    \mathscr{L}^\mathcal{F},\mathrm{i}^\mathcal{F},\mathrm{d})
    \text{ and }
    (\mathfrak{X}^\bullet_\mathcal{R}
    (\mathcal{A})_\mathcal{F},\wedge_\mathcal{F},
    \llbracket\cdot,\cdot\rrbracket_\mathcal{F})
\end{equation}
as the \textit{twisted Cartan calculus} with respect to $\mathcal{F}$ and 
$\mathcal{R}$.
\begin{proposition}\label{proposition01}
This assignment
\begin{equation}
    {}^\mathcal{F}\colon
    (\mathfrak{X}^\bullet_\mathcal{R}
    (\mathcal{A})_\mathcal{F},\wedge_\mathcal{F},
    \llbracket\cdot,\cdot\rrbracket_\mathcal{F})
    \rightarrow
    (\mathfrak{X}^\bullet_{\mathcal{R}_\mathcal{F}}(\mathcal{A}_\mathcal{F}),
    \wedge_{\mathcal{R}_\mathcal{F}},
    \llbracket\cdot,\cdot\rrbracket_{\mathcal{R}_\mathcal{F}}),
\end{equation}
defined by eq.(\ref{eq07}) and eq.(\ref{eq10}),
is an isomorphism of braided Gerstenhaber algebras and
the twisted Cartan calculus with respect to $\mathcal{R}$ and
$\mathcal{F}$ is isomorphic to the braided Cartan calculus on
$\mathcal{A}_\mathcal{F}$ with respect
to $\mathcal{R}_\mathcal{F}$ via the isomorphism ${}^\mathcal{F}$.
In particular
\begin{equation}
    (\llbracket X,Y\rrbracket_\mathcal{F})^\mathcal{F}
    =\llbracket X^\mathcal{F},Y^\mathcal{F}\rrbracket_{\mathcal{R}_\mathcal{F}},~
    (\mathscr{L}^\mathcal{F}_X\omega)^\mathcal{F}
    =\mathscr{L}^{\mathcal{R}_\mathcal{F}}_{X^\mathcal{F}}\omega^\mathcal{F},~
    (\mathrm{i}^\mathcal{F}_X\omega)^\mathcal{F}
    =\mathrm{i}^{\mathcal{R}_\mathcal{F}}_{X^\mathcal{F}}\omega^\mathcal{F},~
    (\mathrm{d}\omega)^\mathcal{F}
    =\mathrm{d}\omega^\mathcal{F}
\end{equation}
for all $X,Y\in\mathfrak{X}^\bullet_\mathcal{R}(\mathcal{A})_\mathcal{F}$
and $\omega\in\Omega^\bullet_\mathcal{R}(\mathcal{A})_\mathcal{F}$.
\end{proposition}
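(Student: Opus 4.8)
The plan is to exploit that, by construction, the twisted Cartan calculus is the image of the braided Cartan calculus of $\mathcal{A}$ under the braided monoidal equivalence $\mathrm{Drin}_\mathcal{F}$ of Theorem~\ref{thm02} (combined with the monoidal natural transformation $\varphi$), and to identify this image with the independently constructed braided Cartan calculus of $\mathcal{A}_\mathcal{F}$ by means of the uniqueness of the latter's structures. Since $\mathrm{Drin}_\mathcal{F}$ is braided monoidal, it sends braided Lie algebras, braided Gerstenhaber algebras, braided Graßmann algebras and graded braided commutators to structures of the same type, so $\wedge_\mathcal{F}$, $\llbracket\cdot,\cdot\rrbracket_\mathcal{F}$, $\mathrm{i}^\mathcal{F}$ and $\mathscr{L}^\mathcal{F}$ automatically enjoy the braided algebraic properties of their undeformed analogues. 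Using that $\Omega^\bullet_\mathcal{R}(\mathcal{A})$ is generated as an algebra by $\mathcal{A}$ and $\mathrm{d}\mathcal{A}$, that $\mathfrak{X}^\bullet_\mathcal{R}(\mathcal{A})$ is generated in degrees $0$ and $1$, the uniqueness of the braided Schouten--Nijenhuis bracket characterised by eq.(\ref{eq09}), and the characterisation of $\mathrm{d}$ as the graded derivation of $\wedge_\mathcal{R}$ of degree $+1$ with $\mathrm{i}^\mathcal{R}_X(\mathrm{d}a)=X(a)$ and $\mathrm{d}^2=0$, I would reduce the whole statement to the degree-$\leq 1$ case.

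The degree-$1$ case is the computational heart, and the step I expect to be the main obstacle. First I would verify that $X\mapsto X^\mathcal{F}$ of eq.(\ref{eq07}) is a well-defined isomorphism of $H_\mathcal{F}$-equivariant braided symmetric $\mathcal{A}_\mathcal{F}$-bimodules $\mathrm{Der}_\mathcal{R}(\mathcal{A})_\mathcal{F}\to\mathrm{Der}_{\mathcal{R}_\mathcal{F}}(\mathcal{A}_\mathcal{F})$: that $X^\mathcal{F}$ is a braided derivation of $\mathcal{A}_\mathcal{F}$ with respect to $\mathcal{R}_\mathcal{F}=\mathcal{F}_{21}\mathcal{R}\mathcal{F}^{-1}$ is a manipulation using the $2$-cocycle condition and the hexagon relations, entirely parallel to the module-algebra case of Lemma~\ref{lemma01}; the module compatibilities are eq.(\ref{eq08}); and bijectivity holds because ${}^\mathcal{F}$ is $\mathrm{Drin}_\mathcal{F}$ post-composed with the $\varphi$-identification. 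Dually one obtains an isomorphism on $\underline{\Omega}^1$ intertwining the braided evaluation, which gives $(\mathrm{i}^\mathcal{F}_X\omega)^\mathcal{F}=\mathrm{i}^{\mathcal{R}_\mathcal{F}}_{X^\mathcal{F}}\omega^\mathcal{F}$ in degree $1$. Finally a bookkeeping computation with $\mathcal{R}$ and $\mathcal{F}$ establishes the degree-$1$ bracket identity $(\llbracket X,Y\rrbracket_\mathcal{F})^\mathcal{F}=[X^\mathcal{F},Y^\mathcal{F}]_{\mathcal{R}_\mathcal{F}}$; together with $X^\mathcal{F}(a)=(\mathcal{F}_1^{-1}\rhd X)(\mathcal{F}_2^{-1}\rhd a)$ this is precisely the data eq.(\ref{eq09}) characterising the braided Schouten--Nijenhuis bracket on $\mathcal{A}_\mathcal{F}$. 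This $\mathcal{R}$/$\mathcal{F}$ juggling --- in particular the braided Leibniz rule for $X^\mathcal{F}$ --- is the only genuinely delicate part; the detailed computations are carried out in~\cite{ThomasPhDThesis}.

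Propagating the identities to all degrees is then formal. Extending ${}^\mathcal{F}$ inductively as a homomorphism of the twisted wedge products as in eq.(\ref{eq10}) produces isomorphisms of $H_\mathcal{F}$-equivariant braided symmetric $\mathcal{A}_\mathcal{F}$-bimodules and of braided Graßmann algebras; since $\mathrm{Drin}_\mathcal{F}$ preserves braided Gerstenhaber brackets, $\llbracket\cdot,\cdot\rrbracket_\mathcal{F}$ transports to a braided Gerstenhaber bracket on $(\mathfrak{X}^\bullet_{\mathcal{R}_\mathcal{F}}(\mathcal{A}_\mathcal{F}),\wedge_{\mathcal{R}_\mathcal{F}})$ satisfying eq.(\ref{eq09}), hence equal to $\llbracket\cdot,\cdot\rrbracket_{\mathcal{R}_\mathcal{F}}$ by uniqueness; this gives the first identity. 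For the de Rham differential, the equivariance $\xi\rhd\mathrm{d}=\epsilon(\xi)\mathrm{d}$ makes $\mathrm{d}$ insensitive to the Drinfel'd functor, so the twisted $\mathrm{d}$ is literally the original one as a $\Bbbk$-linear map; using equivariance once more one checks that $\mathrm{d}$ is a graded derivation of $\wedge_\mathcal{F}$ and that $\mathrm{i}^\mathcal{F}_X(\mathrm{d}a)=X^\mathcal{F}(a)$, so it satisfies the characterisation of the de Rham differential of $\mathcal{A}_\mathcal{F}$ and therefore equals it, giving $(\mathrm{d}\omega)^\mathcal{F}=\mathrm{d}\omega^\mathcal{F}$. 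For the insertion, the map $\omega^\mathcal{F}\mapsto(\mathrm{i}^\mathcal{F}_X\omega)^\mathcal{F}$ with $X\in\mathfrak{X}^1_\mathcal{R}(\mathcal{A})_\mathcal{F}$ is a graded braided derivation of $\wedge_{\mathcal{R}_\mathcal{F}}$ of degree $-1$ that agrees with $\mathrm{i}^{\mathcal{R}_\mathcal{F}}_{X^\mathcal{F}}$ on the generators --- trivially on $\mathcal{A}_\mathcal{F}$ and, via $\mathrm{i}^{\mathcal{R}_\mathcal{F}}_{X^\mathcal{F}}(\mathrm{d}a)=X^\mathcal{F}(a)$, on $\mathrm{d}\mathcal{A}_\mathcal{F}$ --- hence the two coincide; the case of general $X$ follows from $\mathrm{i}^\mathcal{R}_{X\wedge_\mathcal{R}Y}=\mathrm{i}^\mathcal{R}_X\mathrm{i}^\mathcal{R}_Y$, its twisted analogue, and induction on degree. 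Finally $\mathscr{L}^\mathcal{R}_X=[\mathrm{i}^\mathcal{R}_X,\mathrm{d}]_\mathcal{R}$, the compatibility of $\mathrm{Drin}_\mathcal{F}$ with graded braided commutators, and the identities just proved for $\mathrm{i}$ and $\mathrm{d}$ combine to yield $(\mathscr{L}^\mathcal{F}_X\omega)^\mathcal{F}=\mathscr{L}^{\mathcal{R}_\mathcal{F}}_{X^\mathcal{F}}\omega^\mathcal{F}$, finishing the argument.
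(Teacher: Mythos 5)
Your proposal is correct and follows essentially the same route as the paper: the computational core is the degree-one identity $(X\cdot_\mathcal{F}Y)^\mathcal{F}=X^\mathcal{F}\cdot_{\mathcal{R}_\mathcal{F}}Y^\mathcal{F}$ via the inverse $2$-cocycle condition, from which the braided-commutator identity follows, and the extension to all degrees rests on the uniqueness characterisation eq.(\ref{eq09}) of the braided Schouten--Nijenhuis bracket together with the homomorphism property eq.(\ref{eq10}). Your treatment of $\mathrm{d}$, $\mathrm{i}$ and $\mathscr{L}$ by reduction to generators and graded braided derivations is exactly the ``the other equations follow similarly'' step that the paper defers to \cite{ThomasPhDThesis}.
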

\begin{proof}
By the inverse $2$-cocycle property the twisted concatenation of
$X,Y\in\mathrm{Der}(\mathcal{A})_\mathcal{F}$ equals
\begin{align*}
    (X\cdot_\mathcal{F}Y)^\mathcal{F}(a)
    =((\mathcal{F}_{1(1)}^{-1}\mathcal{F}_1^{'-1})\rhd X)
    ((\mathcal{F}_{1(2)}^{-1}\mathcal{F}_2^{'-1})\rhd Y)
    (\mathcal{F}_2^{-1}\rhd a)
    =(X^\mathcal{F}\cdot_{\mathcal{R}_\mathcal{F}}Y^\mathcal{F})(a)
\end{align*}
for all $a\in\mathcal{A}$, where $\cdot_{\mathcal{R}_\mathcal{F}}$ denotes
the concatenation of endomorphisms of $\mathcal{A}_\mathcal{F}$. Then
\begin{align*}
    ([X,Y]_\mathcal{F})^\mathcal{F}
    =&([\mathcal{F}_1^{-1}\rhd X,
    \mathcal{F}_2^{-1}\rhd Y]_\mathcal{R})^\mathcal{F}\\
    =&((\mathcal{F}_1^{-1}\rhd X)
    \cdot_\mathcal{R}(\mathcal{F}_2^{-1}\rhd Y))^\mathcal{F}
    -(((\mathcal{R}_1^{-1}\mathcal{F}_2^{-1})\rhd Y)
    \cdot_\mathcal{R}((\mathcal{R}_2^{-1}\mathcal{F}_1^{-1})\rhd X))^\mathcal{F}\\
    =&(X\cdot_\mathcal{F}Y)^\mathcal{F}
    -((\mathcal{R}_{\mathcal{F}1}^{-1}\rhd Y)
    \cdot_\mathcal{F}(\mathcal{R}_{\mathcal{F}2}^{-1}\rhd X))^\mathcal{F}\\
    =&X^\mathcal{F}\cdot_{\mathcal{R}_\mathcal{F}}Y^\mathcal{F}
    -(\mathcal{R}_{\mathcal{F}1}^{-1}\rhd Y^\mathcal{F})
    \cdot_{\mathcal{R}_\mathcal{F}}
    (\mathcal{R}_{\mathcal{F}2}^{-1}\rhd X^\mathcal{F})\\
    =&[X^\mathcal{F},Y^\mathcal{F}]_{\mathcal{R}_\mathcal{F}}
\end{align*}
where we also employed (\ref{eq08}). Using formula (\ref{eq09}) our previous
computations together with eq.(\ref{eq10}) imply 
$(\llbracket X,Y\rrbracket_\mathcal{F})^\mathcal{F}
=\llbracket X^\mathcal{F},Y^\mathcal{F}\rrbracket_{\mathcal{R}_\mathcal{F}}$
for all $X,Y\in\mathfrak{X}^\bullet_\mathcal{R}(\mathcal{A})_\mathcal{F}$.
The other equations follow similarly. We refer to \cite{ThomasPhDThesis}
for a full proof.
\end{proof}
In other words, the above proposition shows that the twisted Cartan calculus
is gauge equivalent to the untwisted Cartan calculus. Since
the construction of the braided Cartan calculus is determined by the triangular
structure and the twisted Cartan calculus is braided with respect to the
twisted triangular structure our construction respects the
gauge equivalence. In this light twist deformations seem trivial.
On the other hand, there are situations where it is worth to distinguish the
braided Cartan calculus and its twist deformations. Imagine for example a
commutative left $H$-module algebra $\mathcal{A}$ for a cocommutative Hopf algebra
$H$. For a nontrivial twist $\mathcal{F}$ on $H$ the twisted Cartan
calculus is noncommutative while the untwisted one is commutative.
In this sense one might consider the twisted Cartan calculus as a quantization
of the untwisted one even if both are gauge equivalent. This might be interpreted as
a quantization which is in $1$-$1$-correspondence to its classical counterpart.

\subsection{Equivariant Covariant Derivatives and Metrics}

Having the braided Cartan calculus at hand we wonder if other concepts of
differential geometry generalize to this setting. 
Focusing on the algebraic properties of covariant derivatives, namely
function linearity in the first argument and a Leibniz rule in the second
argument, we introduce equivariant covariant derivatives on equivariant
braided symmetric bimodules. Note that there are several notions of covariant
derivatives on noncommutative algebras (see e.g.
\cite{Arnlind2019,Aschieri2010,AsSh14,Aschieri2006,Schenkel2015,Jyotishman2019,D-VM96,GaetanoThomas19,LaMa2012,Peterka2017}).
In particular one has to distinguish between left and right covariant derivatives.
In the spirit of these notes we demand the covariant derivative to be
equivariant in addition, for which the definitions of left and right
covariant derivatives coincide. Curvature and Torsion of equivariant
covariant derivatives are discussed and we extend an equivariant
covariant derivative on the algebra to braided multivector fields and
differential forms. We furthermore give a generalization of metrics 
to the braided commutative setting and prove that there exists a unique
equivariant Levi-Civita covariant derivative for every non-degenerate
equivariant metric.
Fix in the following a triangular Hopf algebra $(H,\mathcal{R})$ and a
braided commutative left $H$-module algebra $\mathcal{A}$.
\begin{definition}[Equivariant covariant derivative]
Consider an $H$-equivariant braided symmetric
$\mathcal{A}$-bimodule $\mathcal{M}$. An $H$-equivariant map
$
\nabla^\mathcal{R}\colon\mathfrak{X}^1_\mathcal{R}(\mathcal{A})
\otimes\mathcal{M}\rightarrow\mathcal{M}
$
is said to be an equivariant covariant derivative on $\mathcal{M}$
with respect to $\mathcal{R}$, if for all
$a\in\mathcal{A}$, $X\in\mathfrak{X}^1_\mathcal{R}(\mathcal{A})$ and
$s\in\mathcal{M}$ one has
\begin{equation}\label{eq11}
    \nabla^\mathcal{R}_{a\cdot X}s
    =a\cdot(\nabla^\mathcal{R}_Xs)
\end{equation}
and
\begin{equation}\label{eq12}
    \nabla^\mathcal{R}_X(a\cdot s)
    =(\mathscr{L}^\mathcal{R}_Xa)\cdot s
    +(\mathcal{R}_1^{-1}\rhd a)\cdot
    (\nabla^\mathcal{R}_{\mathcal{R}_2^{-1}\rhd X}s).
\end{equation}
\end{definition}
Note that $H$-equivariance of a $\Bbbk$-linear map
$\nabla^\mathcal{R}\colon\mathfrak{X}^1_\mathcal{R}(\mathcal{A})
\otimes\mathcal{M}\rightarrow\mathcal{M}$ reads 
$\xi\rhd(\nabla^\mathcal{R}_Xs)
=\nabla^\mathcal{R}_{\xi_{(1)}\rhd X}(\xi_{(2)}\rhd s)$ for all
$\xi\in H$, $X\in\mathfrak{X}^1_\mathcal{R}(\mathcal{A})$ and $s\in\mathcal{M}$.
The \textit{curvature} of an equivariant covariant derivative $\nabla^\mathcal{R}$
on $\mathcal{M}$ is defined by
\begin{equation}
    R^{\nabla^\mathcal{R}}(X,Y)
    =\nabla^\mathcal{R}_X\nabla^\mathcal{R}_Y
    -\nabla^\mathcal{R}_{\mathcal{R}_1^{-1}\rhd Y}
    \nabla^\mathcal{R}_{\mathcal{R}_2^{-1}\rhd X}
    -\nabla^\mathcal{R}_{[X,Y]_\mathcal{R}}
\end{equation}
for $X,Y\in\mathfrak{X}^1_\mathcal{R}(\mathcal{A})$.
If $\mathcal{M}=\mathfrak{X}^1_\mathcal{R}(\mathcal{A})$
we can further define the \textit{torsion} of $\nabla^\mathcal{R}$ by
\begin{equation}
    \mathrm{Tor}^{\nabla^\mathcal{R}}(X,Y)
    =\nabla^\mathcal{R}_XY
    -\nabla^\mathcal{R}_{\mathcal{R}_1^{-1}\rhd Y}(\mathcal{R}_2^{-1}\rhd X)
    -[X,Y]_\mathcal{R},
\end{equation}
for all $X,Y\in\mathfrak{X}^1_\mathcal{R}(\mathcal{A})$. An equivariant
covariant derivative $\nabla^\mathcal{R}$ is \textit{flat} if
$R^{\nabla^\mathcal{R}}=0$ and \textit{torsion-free} if
$\mathrm{Tor}^{\nabla^\mathcal{R}}=0$.
While eq.(\ref{eq11}) and eq.(\ref{eq12}) usually only refer to a left covariant
derivative we prove in the following lemma (c.f. \cite{ThomasPhDThesis})
that in the equivariant setup the
notions of left and right covariant derivatives are equivalent.
\begin{lemma}
Let $\nabla^\mathcal{R}$ be a covariant derivative on an $H$-equivariant
braided symmetric $\mathcal{A}$-bimodule $\mathcal{M}$. Then
for all
$a\in\mathcal{A}$, $X\in\mathfrak{X}^1_\mathcal{R}(\mathcal{A})$ and
$s\in\mathcal{M}$,
\begin{equation}\label{eq13}
    \nabla^\mathcal{R}_{X\cdot a}s
    =(\nabla^\mathcal{R}_X(\mathcal{R}_1^{-1}\rhd s))
    \cdot(\mathcal{R}_2^{-1}\rhd a)
\end{equation}
and
\begin{equation}\label{eq14}
    \nabla^\mathcal{R}_X(s\cdot a)
    =(\nabla^\mathcal{R}_Xs)\cdot a
    +(\mathcal{R}_1^{-1}\rhd s)\cdot
    (\mathscr{L}^\mathcal{R}_{\mathcal{R}_2^{-1}\rhd X}a)
\end{equation}
hold. On the other hand, every $H$-equivariant map
$\nabla^\mathcal{R}\colon\mathfrak{X}^1_\mathcal{R}(\mathcal{A})\otimes
\mathcal{M}\rightarrow\mathcal{M}$ satisfying eq.(\ref{eq13}) and
eq.(\ref{eq14}) is an equivariant covariant derivative on $\mathcal{M}$.
\end{lemma}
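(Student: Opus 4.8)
The plan is to exploit the braided symmetry of the bimodules involved: equations~(\ref{eq11})--(\ref{eq12}) express that $\nabla^\mathcal{R}$ is compatible with the \emph{left} $\mathcal{A}$-action, while (\ref{eq13})--(\ref{eq14}) express compatibility with the \emph{right} $\mathcal{A}$-action, and on a braided symmetric $\mathcal{A}$-bimodule in a symmetric braided monoidal category the two are related by the involutive braiding $c^\mathcal{R}$, so the two sets of conditions are equivalent. Concretely, for (\ref{eq13}) I would start from the braided symmetry of $\mathfrak{X}^1_\mathcal{R}(\mathcal{A})$, writing $X\cdot a=(\mathcal{R}_1^{-1}\rhd a)\cdot(\mathcal{R}_2^{-1}\rhd X)$, apply (\ref{eq11}) to get $\nabla^\mathcal{R}_{X\cdot a}s=(\mathcal{R}_1^{-1}\rhd a)\cdot(\nabla^\mathcal{R}_{\mathcal{R}_2^{-1}\rhd X}s)$, and then use the braided symmetry of $\mathcal{M}$ to move $\mathcal{R}_1^{-1}\rhd a$ to the right; the $H$-action produced on $\nabla^\mathcal{R}_{\cdots}s$ is absorbed by the $H$-equivariance $\xi\rhd(\nabla^\mathcal{R}_Xs)=\nabla^\mathcal{R}_{\xi_{(1)}\rhd X}(\xi_{(2)}\rhd s)$, and the accumulated $\mathcal{R}$-matrix legs collapse by the hexagon relations and triangularity $\mathcal{R}^{-1}=\mathcal{R}_{21}$ to yield exactly $(\nabla^\mathcal{R}_X(\mathcal{R}_1^{-1}\rhd s))\cdot(\mathcal{R}_2^{-1}\rhd a)$.

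For (\ref{eq14}) the strategy is the same: rewrite $s\cdot a=(\mathcal{R}_1^{-1}\rhd a)\cdot(\mathcal{R}_2^{-1}\rhd s)$, apply the Leibniz rule (\ref{eq12}) to the substituted arguments, and obtain two summands. In the first one I replace $\mathscr{L}^\mathcal{R}_X(\mathcal{R}_1^{-1}\rhd a)=X(\mathcal{R}_1^{-1}\rhd a)\in\mathcal{A}$ and use braided commutativity of $\mathcal{A}$ together with the braided symmetry of $\mathcal{M}$ to bring it into the form $(\mathcal{R}_1^{-1}\rhd s)\cdot(\mathscr{L}^\mathcal{R}_{\mathcal{R}_2^{-1}\rhd X}a)$, again using that $\mathscr{L}^\mathcal{R}$ is $H$-equivariant (so its graded braided commutators reduce to ordinary ones); the second summand, after the hexagon identities and the involutivity of $c^\mathcal{R}$ are applied, reassembles into $(\nabla^\mathcal{R}_Xs)\cdot a$. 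The converse implication is proved by running the same manipulations in the opposite direction, which is legitimate precisely because the braiding is symmetric and hence every step is reversible; one recovers (\ref{eq11}) from (\ref{eq13}) and (\ref{eq12}) from (\ref{eq14}).

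I expect the only real difficulty to be organizational rather than conceptual: keeping track of the several copies of $\mathcal{R}^{-1}$ (marked with primes as in the rest of the paper) and of the higher Sweedler legs that arise when an $\mathcal{R}$-matrix factor is hit by a coproduct, and then recognising when the hexagon relations $(\Delta\otimes\mathrm{id})(\mathcal{R})=\mathcal{R}_{13}\mathcal{R}_{23}$ and $(\mathrm{id}\otimes\Delta)(\mathcal{R})=\mathcal{R}_{13}\mathcal{R}_{12}$, the identity $\mathcal{R}^{-1}=\mathcal{R}_{21}$, and the normalization $(\epsilon\otimes\mathrm{id})(\mathcal{R})=1$ let the extra legs cancel. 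These are exactly the identities already used earlier in the section (for instance in the well-definedness computations for $\underline{\Omega}^1_\mathcal{R}(\mathcal{A})$ and in the proof of Lemma~\ref{lemma02}), so no new ingredient is needed; I would carry out the bookkeeping carefully and, for the lengthier cancellations, refer to \cite{ThomasPhDThesis} as the paper does elsewhere.
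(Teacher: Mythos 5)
Your proposal is correct and follows the natural (and essentially unique) route: rewrite the right $\mathcal{A}$-actions through the involutive braiding, apply the left-covariant-derivative axioms (\ref{eq11})--(\ref{eq12}) and $H$-equivariance, and collapse the accumulated $\mathcal{R}^{-1}$-legs via the hexagon relations and $\mathcal{R}^{-1}=\mathcal{R}_{21}$ — I checked that the leg bookkeeping does close up as you claim, e.g. the pair $(\mathcal{R}'^{-1})_{13}(\mathcal{R}^{-1})_{31}=(\mathcal{R}^{-1}\mathcal{R})_{13}=1$ is exactly what kills the extra factors in (\ref{eq13}). The paper itself omits the computation and defers it to \cite{ThomasPhDThesis}, so your argument matches the intended proof; the only cosmetic point is that in the first summand of (\ref{eq14}) what you actually need is the adjoint-action identity $\xi\rhd(X(a))=(\xi_{(1)}\rhd X)(\xi_{(2)}\rhd a)$ and the braided symmetry of $\mathcal{M}$, rather than braided commutativity of $\mathcal{A}$.
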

There are natural extensions of an equivariant covariant derivative
$\nabla^\mathcal{R}\colon\mathfrak{X}^1_\mathcal{R}(\mathcal{A})\otimes
\mathfrak{X}^1_\mathcal{R}(\mathcal{A})\rightarrow
\mathfrak{X}^1_\mathcal{R}(\mathcal{A})$ to
braided multivector fields and differential forms in analogy
to differential geometry. We define the \textit{braided dual
pairing} $\langle\cdot,\cdot\rangle_\mathcal{R}\colon
\Omega^1_\mathcal{R}(\mathcal{A})\otimes
\mathfrak{X}^1_\mathcal{R}(\mathcal{R})\rightarrow\mathcal{A}$ by
$\langle\omega,X\rangle_\mathcal{R}=\omega(X)$
for all $\omega\in\Omega^1_\mathcal{R}
(\mathcal{A})$ and $X\in\mathfrak{X}^1_\mathcal{R}(\mathcal{A})$. It is
$H$-equivariant,
left $\mathcal{A}$-linear in the first and right $\mathcal{A}$-linear in the
second argument.
\begin{proposition}\label{prop02}
An equivariant covariant derivative $\nabla^\mathcal{R}$ on
$\mathfrak{X}^1_\mathcal{R}(\mathcal{A})$ induces an equivariant covariant derivative
$\tilde{\nabla}^\mathcal{R}$ on $\Omega^1_\mathcal{R}(\mathcal{A})$ via
\begin{equation}
    \langle\tilde{\nabla}^\mathcal{R}_X\omega,Y\rangle_\mathcal{R}
    =\mathscr{L}^\mathcal{R}_X\langle\omega,Y\rangle_\mathcal{R}
    -\langle\mathcal{R}_1^{-1}\rhd\omega,
    \nabla^\mathcal{R}_{\mathcal{R}_2^{-1}\rhd X}Y\rangle_\mathcal{R}
\end{equation}
for all $X,Y\in\mathfrak{X}^1_\mathcal{R}(\mathcal{A})$ and
$\omega\in\Omega^1_\mathcal{R}(\mathcal{A})$. Moreover, $\nabla^\mathcal{R}$
and $\tilde{\nabla}^\mathcal{R}$ can be extended
as braided derivations to equivariant covariant
derivatives on $\mathfrak{X}^\bullet_\mathcal{R}(\mathcal{A})$ and
$\Omega^\bullet_\mathcal{R}(\mathcal{A})$, respectively.
\end{proposition}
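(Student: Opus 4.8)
The plan is to establish the two assertions separately. For the first, I would check in turn that the displayed formula produces a well-defined element $\tilde\nabla^\mathcal{R}_X\omega$ of $\Omega^1_\mathcal{R}(\mathcal{A})$, that $\tilde\nabla^\mathcal{R}$ is $H$-equivariant, and that it satisfies the covariant-derivative axioms (\ref{eq11})--(\ref{eq12}). Well-definedness amounts to right $\mathcal{A}$-linearity of $Y\mapsto\langle\tilde\nabla^\mathcal{R}_X\omega,Y\rangle_\mathcal{R}$: replacing $Y$ by $Y\cdot a$, the first summand $\mathscr{L}^\mathcal{R}_X\langle\omega,Y\cdot a\rangle_\mathcal{R}=X(\omega(Y)\cdot a)$ splits by the braided Leibniz rule for $X$ on $\mathcal{A}$, while the second summand splits by the right Leibniz rule (\ref{eq14}) for $\nabla^\mathcal{R}$; the hexagon relations for $\mathcal{R}^{-1}$ then recombine the cross terms so that what remains is $\langle\tilde\nabla^\mathcal{R}_X\omega,Y\rangle_\mathcal{R}\cdot a$. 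That $\tilde\nabla^\mathcal{R}_X\omega$ again lies in the subalgebra $\Omega^1_\mathcal{R}(\mathcal{A})$ is verified on generators $\omega=a_0\,\mathrm{d}a_1$, using (\ref{eq12}) to reduce to $\tilde\nabla^\mathcal{R}_X\mathrm{d}a_1$ and the braided Cartan relations together with $\mathscr{L}^\mathcal{R}_X\mathrm{d}a_1=\mathrm{d}(X(a_1))$.

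The $H$-equivariance $\xi\rhd(\tilde\nabla^\mathcal{R}_X\omega)=\tilde\nabla^\mathcal{R}_{\xi_{(1)}\rhd X}(\xi_{(2)}\rhd\omega)$ follows by pairing against $Y$ and invoking $H$-equivariance of $\langle\cdot,\cdot\rangle_\mathcal{R}$, of $\mathscr{L}^\mathcal{R}$ and of $\nabla^\mathcal{R}$, the adjoint action on $\omega$, and the quasi-cocommutativity (\ref{eq03}) to commute $\xi$ past the legs of $\mathcal{R}^{-1}$. For (\ref{eq11}) I pair $\tilde\nabla^\mathcal{R}_{a\cdot X}\omega$ against $Y$: since $\mathrm{i}^\mathcal{R}_{a\cdot X}=a\cdot\mathrm{i}^\mathcal{R}_X$ one has, on $\mathcal{A}$, $\mathscr{L}^\mathcal{R}_{a\cdot X}=a\cdot X(\,\cdot\,)$, and $\nabla^\mathcal{R}_{\mathcal{R}_2^{-1}\rhd(a\cdot X)}Y$ is handled by (\ref{eq11}) for $\nabla^\mathcal{R}$ after expanding $\mathcal{R}_2^{-1}\rhd(a\cdot X)$; braided left $\mathcal{A}$-linearity of the pairing extracts the common factor $a$. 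For (\ref{eq12}) I pair $\tilde\nabla^\mathcal{R}_X(a\cdot\omega)$ against $Y$, use $\langle a\cdot\omega,Y\rangle_\mathcal{R}=a\cdot\langle\omega,Y\rangle_\mathcal{R}$ together with the braided Leibniz rule of $\mathscr{L}^\mathcal{R}_X$ on $\mathcal{A}$, rewrite $\mathcal{R}_1^{-1}\rhd(a\cdot\omega)$ by $H$-equivariance of the braided symmetric bimodule structure, and collect the terms into $(\mathscr{L}^\mathcal{R}_Xa)\cdot\omega+(\mathcal{R}_1^{-1}\rhd a)\cdot\tilde\nabla^\mathcal{R}_{\mathcal{R}_2^{-1}\rhd X}\omega$. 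Each of these is a direct manipulation with the triangular-structure identities of Section~\ref{section2}.

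For the second assertion I set $\nabla^\mathcal{R}_Xa=X(a)$ on degree $0$, keep the given $\nabla^\mathcal{R}$ on degree $1$, and extend $\nabla^\mathcal{R}_X$ to the tensor algebra $\mathrm{T}^\bullet\mathrm{Der}_\mathcal{R}(\mathcal{A})$ recursively by the braided Leibniz rule $\nabla^\mathcal{R}_X(Y\otimes_\mathcal{A}Z)=(\nabla^\mathcal{R}_XY)\otimes_\mathcal{A}Z+(\mathcal{R}_1^{-1}\rhd Y)\otimes_\mathcal{A}\nabla^\mathcal{R}_{\mathcal{R}_2^{-1}\rhd X}Z$; compatibility with the balancing over $\mathcal{A}$ is exactly (\ref{eq13})--(\ref{eq14}) plus braided symmetry. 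One then checks that $\nabla^\mathcal{R}_X$ sends the generators of the ideal defining $\wedge_\mathcal{R}$ back into that ideal — it suffices to treat the degree-$2$ braided-antisymmetrization generator, where braided skew-symmetry of $[\cdot,\cdot]_\mathcal{R}$ and the hexagon relations are used, the higher generators following inductively from the braided Leibniz rule — so that $\nabla^\mathcal{R}$ descends to an operator on $\mathfrak{X}^\bullet_\mathcal{R}(\mathcal{A})$ which is by construction a braided derivation of $\wedge_\mathcal{R}$; the axioms (\ref{eq11})--(\ref{eq12}) for the extension then follow by induction on degree from the degree-$1$ case and the Leibniz rule. The extension of $\tilde\nabla^\mathcal{R}$ to $\Omega^\bullet_\mathcal{R}(\mathcal{A})$ is obtained identically, extending along $\wedge_\mathcal{R}$ by the braided Leibniz rule with $\tilde\nabla^\mathcal{R}_Xa=X(a)$ on $\Omega^0_\mathcal{R}(\mathcal{A})$.

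The main obstacle is the bookkeeping in this last step: showing that the braided-Leibniz extension annihilates the Graßmann ideal modulo itself. This is the braided counterpart of the classical fact that a derivation of the tensor algebra which is skew on generators descends to the exterior algebra, but here "skew" must be read as "braided skew", so one has to carry the two legs of every $\mathcal{R}^{-1}$ carefully through the nested $\mathcal{R}$'s appearing in the ideal generators, repeatedly invoking the hexagon relations and the braided symmetry of the bimodules involved. Everything else in the argument is a routine, if lengthy, application of the identities already recorded earlier.
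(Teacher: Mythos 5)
Your proposal is correct and follows essentially the same route as the paper: verify right $\mathcal{A}$-linearity in $Y$ (hence well-definedness of $\tilde{\nabla}^\mathcal{R}_X\omega$) using the braided Leibniz rules of $\mathscr{L}^\mathcal{R}$ and $\nabla^\mathcal{R}$ together with the hexagon relations, check equivariance and the axioms (\ref{eq11})--(\ref{eq12}) by the same pairing manipulations, and extend to higher degrees as a braided derivation of $\wedge_\mathcal{R}$. The only difference is that you spell out the descent of the tensor-algebra extension through the Graßmann ideal, a point the paper's proof leaves implicit in its inductive extension.
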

\begin{proof}
Let $X,Y\in\mathfrak{X}^1_\mathcal{R}(\mathcal{A})$,
$\omega\in\Omega^1_\mathcal{R}(\mathcal{A})$ and
$a\in\mathcal{A}$. Then $\tilde{\nabla}^\mathcal{R}_X\omega
\in\Omega^1_\mathcal{R}(\mathcal{A})$ is well-defined, since
\begin{align*}
    \langle\tilde{\nabla}^\mathcal{R}_X\omega,Y\cdot a\rangle_\mathcal{R}
    =&\mathscr{L}^\mathcal{R}_X\langle\omega,Y\cdot a\rangle_\mathcal{R}
    -\langle\mathcal{R}_1^{-1}\rhd\omega,
    \nabla^\mathcal{R}_{\mathcal{R}_2^{-1}\rhd X}(Y\cdot a)\rangle_\mathcal{R}\\
    =&(\mathscr{L}^\mathcal{R}_X\langle\omega,Y\rangle_\mathcal{R})\cdot a
    +(\mathcal{R}_1^{-1}\rhd\langle\omega,Y\rangle_\mathcal{R})
    \cdot\mathscr{L}^\mathcal{R}_{\mathcal{R}_2^{-1}\rhd X}a\\
    &-\langle\mathcal{R}_1^{-1}\rhd\omega,
    (\nabla^\mathcal{R}_{\mathcal{R}_2^{-1}\rhd X}Y)\cdot a
    +(\mathcal{R}_1^{'-1}\rhd Y)\cdot
    \mathscr{L}^\mathcal{R}_{\mathcal{R}_2^{'-1}
    \mathcal{R}_2^{-1}\rhd X}a\rangle_\mathcal{R}\\
    =&\langle\tilde{\nabla}^\mathcal{R}_X\omega,Y\rangle_\mathcal{R}\cdot a.
\end{align*}
Similarly one proves that $\tilde{\nabla}^\mathcal{R}$ is left $\mathcal{A}$-linear
in the first argument and satisfies the braided Leibniz rule in the second
argument. For another $\eta\in\Omega^1_\mathcal{R}(\mathcal{A})$ one verifies that
\begin{equation}
    \tilde{\nabla}^\mathcal{R}_X(\omega\wedge_\mathcal{R}\eta)
    =\tilde{\nabla}^\mathcal{R}_X\omega\wedge_\mathcal{R}\eta
    +(\mathcal{R}_1^{-1}\rhd\omega)\wedge_\mathcal{R}
    \tilde{\nabla}^\mathcal{R}_{\mathcal{R}_2^{-1}\rhd X}\eta
\end{equation}
defines an equivariant
covariant derivative on $\Omega^2_\mathcal{R}(\mathcal{A})$ and inductively
$\tilde{\nabla}^\mathcal{R}$ extends as a braided derivation of $\wedge_\mathcal{R}$
to $\Omega^\bullet_\mathcal{R}(\mathcal{A})$. The extension of $\nabla^\mathcal{R}$
to braided multivector fields is entirely similar.
\end{proof}
Let $\nabla^\mathcal{R}\colon\mathfrak{X}^1_\mathcal{R}(\mathcal{A})
\otimes\mathcal{M}\rightarrow\mathcal{M}$ be an equivariant covariant derivative
with respect to $\mathcal{R}$ on an $H$-equivariant braided symmetric
$\mathcal{A}$-bimodule $\mathcal{M}$.
For any twist $\mathcal{F}$ on $H$ we define the
\textit{twisted equivariant covariant derivative}
\begin{equation}
    \nabla^\mathcal{F}=
    \mathrm{Drin}_\mathcal{F}(\nabla^\mathcal{R})\circ
    \varphi_{\mathfrak{X}^1_\mathcal{R}(\mathcal{A}),
    \mathcal{M}}\colon\mathfrak{X}^1_\mathcal{R}(\mathcal{A})_\mathcal{F}
    \otimes_\mathcal{F}\mathcal{M}_\mathcal{F}\rightarrow\mathcal{M}_\mathcal{F},
\end{equation}
which reads
\begin{equation}
    \nabla^\mathcal{F}_Xs
    =\nabla^\mathcal{R}_{\mathcal{F}_1^{-1}\rhd X}(\mathcal{F}_2^{-1}\rhd s).
\end{equation}
on elements $X\in\mathfrak{X}^1_\mathcal{R}(\mathcal{A})_\mathcal{F}$ and
$s\in\mathcal{M}_\mathcal{F}$.
\begin{proposition}
The twisted equivariant covariant derivative is an equivariant covariant
derivative with respect to the twisted triangular structure, where we
identify $\mathfrak{X}^1_\mathcal{R}(\mathcal{A})_\mathcal{F}$
with $\mathfrak{X}^1_{\mathcal{R}_\mathcal{F}}(\mathcal{A}_\mathcal{F})$
according to Proposition~\ref{proposition01}.
\end{proposition}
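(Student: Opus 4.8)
The plan is to check directly that $\nabla^\mathcal{F}$ satisfies the three defining conditions of an equivariant covariant derivative on the $H_\mathcal{F}$-equivariant braided symmetric $\mathcal{A}_\mathcal{F}$-bimodule $\mathcal{M}_\mathcal{F}=\mathrm{Drin}_\mathcal{F}(\mathcal{M})$ with respect to $\mathcal{R}_\mathcal{F}$: $H_\mathcal{F}$-equivariance, left $\mathcal{A}_\mathcal{F}$-linearity in the first slot (eq.(\ref{eq11})), and the braided Leibniz rule in the second slot (eq.(\ref{eq12})) with $\mathcal{R}$ and $\mathscr{L}^\mathcal{R}$ replaced by $\mathcal{R}_\mathcal{F}$ and $\mathscr{L}^{\mathcal{R}_\mathcal{F}}$. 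Conceptually this is forced: $\nabla^\mathcal{F}=\mathrm{Drin}_\mathcal{F}(\nabla^\mathcal{R})\circ\varphi_{\mathfrak{X}^1_\mathcal{R}(\mathcal{A}),\mathcal{M}}$ is the image of $\nabla^\mathcal{R}$ under the braided monoidal equivalence of Theorem~\ref{thm02}, and, by Proposition~\ref{proposition01}, this equivalence identifies $\mathfrak{X}^1_\mathcal{R}(\mathcal{A})_\mathcal{F}$ with $\mathfrak{X}^1_{\mathcal{R}_\mathcal{F}}(\mathcal{A}_\mathcal{F})$ via ${}^\mathcal{F}$ and intertwines $\mathscr{L}^\mathcal{F}$ with $\mathscr{L}^{\mathcal{R}_\mathcal{F}}$; hence every axiom, being expressed purely in terms of the monoidal product, the braiding $c^\mathcal{R}$ and the braided Lie derivative, is transported along it. For a self-contained argument one substitutes the explicit formulas $\nabla^\mathcal{F}_Xs=\nabla^\mathcal{R}_{\mathcal{F}_1^{-1}\rhd X}(\mathcal{F}_2^{-1}\rhd s)$, $a\cdot_\mathcal{F}s=(\mathcal{F}_1^{-1}\rhd a)\cdot(\mathcal{F}_2^{-1}\rhd s)$, $\mathcal{R}_\mathcal{F}=\mathcal{F}_{21}\mathcal{R}\mathcal{F}^{-1}$ and $\mathscr{L}^\mathcal{F}_Xa=(\mathcal{F}_1^{-1}\rhd X)(\mathcal{F}_2^{-1}\rhd a)=X^\mathcal{F}(a)$, and reduces each identity to its undeformed counterpart.

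$H_\mathcal{F}$-equivariance follows at once: by equivariance of $\nabla^\mathcal{R}$ and of the adjoint action, $\xi\rhd(\nabla^\mathcal{F}_Xs)=\nabla^\mathcal{R}_{(\xi_{(1)}\mathcal{F}_1^{-1})\rhd X}((\xi_{(2)}\mathcal{F}_2^{-1})\rhd s)$, and the quasi-cocommutativity identity $\xi_{(1)}\mathcal{F}_1^{-1}\otimes\xi_{(2)}\mathcal{F}_2^{-1}=(\mathcal{F}_1^{-1}\xi_{\widehat{(1)}})\otimes(\mathcal{F}_2^{-1}\xi_{\widehat{(2)}})$ rewrites this as $\nabla^\mathcal{F}_{\xi_{\widehat{(1)}}\rhd X}(\xi_{\widehat{(2)}}\rhd s)$. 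Left $\mathcal{A}_\mathcal{F}$-linearity in the first slot is proved by expanding $\nabla^\mathcal{F}_{a\cdot_\mathcal{F}X}s$, distributing the outer leg of $\mathcal{F}^{-1}$ over the product $(\mathcal{F}_1^{'-1}\rhd a)\cdot(\mathcal{F}_2^{'-1}\rhd X)$ by means of $\Delta$, applying eq.(\ref{eq11}) for $\nabla^\mathcal{R}$, and collapsing the three accumulated legs of $\mathcal{F}^{-1}$ using the inverse $2$-cocycle condition; this is the same computation that appears in Lemma~\ref{lemma01} and in the verification that $\mathrm{Drin}_\mathcal{F}$ is well defined.

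The braided Leibniz rule is the substantive step. One computes $\nabla^\mathcal{F}_X(a\cdot_\mathcal{F}s)=\nabla^\mathcal{R}_{\mathcal{F}_1^{-1}\rhd X}\!\left((\mathcal{F}_{2(1)}^{-1}\mathcal{F}_1^{'-1}\rhd a)\cdot(\mathcal{F}_{2(2)}^{-1}\mathcal{F}_2^{'-1}\rhd s)\right)$, applies eq.(\ref{eq12}) for $\nabla^\mathcal{R}$, and then regroups the two resulting summands. The first summand, $(\mathcal{F}_1^{-1}\rhd X)(\mathcal{F}_{2(1)}^{-1}\mathcal{F}_1^{'-1}\rhd a)\cdot(\mathcal{F}_{2(2)}^{-1}\mathcal{F}_2^{'-1}\rhd s)$, is reassembled — using the adjoint-action axiom and a single application of the $2$-cocycle condition (\ref{eq19}) — into $\left((\mathcal{F}_1^{-1}\rhd X)(\mathcal{F}_2^{-1}\rhd a)\right)\cdot_\mathcal{F}s=(\mathscr{L}^\mathcal{F}_Xa)\cdot_\mathcal{F}s$. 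In the second summand, the legs $\mathcal{R}_1^{-1}\mathcal{F}_{2(1)}^{-1}\mathcal{F}_1^{'-1}$ acting on $a$ and $\mathcal{R}_2^{-1}\mathcal{F}_1^{-1}$, $\mathcal{F}_{2(2)}^{-1}\mathcal{F}_2^{'-1}$ acting on $X$ and $s$ are rearranged, again with the $2$-cocycle condition and the definition $\mathcal{R}_\mathcal{F}=\mathcal{F}_{21}\mathcal{R}\mathcal{F}^{-1}$, into $(\mathcal{R}_{\mathcal{F}1}^{-1}\rhd a)\cdot_\mathcal{F}(\nabla^\mathcal{F}_{\mathcal{R}_{\mathcal{F}2}^{-1}\rhd X}s)$. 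Together these give eq.(\ref{eq12}) for $\mathcal{A}_\mathcal{F}$ and $\mathcal{R}_\mathcal{F}$; applying ${}^\mathcal{F}$ and Proposition~\ref{proposition01} (in particular $(\mathscr{L}^\mathcal{F}_Xa)^\mathcal{F}=\mathscr{L}^{\mathcal{R}_\mathcal{F}}_{X^\mathcal{F}}a$) then phrases the statement on $\mathfrak{X}^1_{\mathcal{R}_\mathcal{F}}(\mathcal{A}_\mathcal{F})$. By the lemma on the equivalence of left and right equivariant covariant derivatives, eq.(\ref{eq13}) and eq.(\ref{eq14}) hold in the twisted setting as well, so no further work is needed there; the same transport shows that $\nabla^\mathcal{F}$ is flat, resp. torsion-free, exactly when $\nabla^\mathcal{R}$ is.

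The only real obstacle is combinatorial bookkeeping: after the coproducts are applied one must track which copy of $\mathcal{F}^{\pm1}$ and of $\mathcal{R}^{\pm1}$ sits on which tensor leg, and invoke the (inverse) $2$-cocycle identity, the normalization of $\mathcal{F}$, and quasi-cocommutativity in the right order so that the legs regroup into $\mathcal{R}_\mathcal{F}$ and into the twisted product $\cdot_\mathcal{F}$. There is no conceptual difficulty — every step is dictated, precisely as in Lemma~\ref{lemma01}, Lemma~\ref{lemma02} and Proposition~\ref{proposition01} — and a fully detailed computation can be found in \cite{ThomasPhDThesis}.
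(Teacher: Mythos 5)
Your proposal is correct and follows essentially the same route as the paper: a direct verification of $H_\mathcal{F}$-equivariance, left $\mathcal{A}_\mathcal{F}$-linearity in the first slot, and the braided Leibniz rule in the second slot, each reduced to the undeformed axiom by the $2$-cocycle condition, the twisted coproduct, and $\mathcal{R}_\mathcal{F}=\mathcal{F}_{21}\mathcal{R}\mathcal{F}^{-1}$. The only quibble is terminological: the identity $\Delta(\xi)\mathcal{F}^{-1}=\mathcal{F}^{-1}\Delta_\mathcal{F}(\xi)$ you invoke for equivariance is just the definition of $\Delta_\mathcal{F}$ rearranged, not quasi-cocommutativity.
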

\begin{proof}
Let $\xi\in H$, $a\in\mathcal{A}$, $X\in\mathfrak{X}^1_\mathcal{R}(\mathcal{A})_\mathcal{F}$
and $s\in\mathcal{M}_\mathcal{F}$. Then
\begin{align*}
    \xi\rhd(\nabla^\mathcal{F}_Xs)
    =\nabla^\mathcal{R}_{(\xi_{(1)}\mathcal{F}_1^{-1})\rhd X}
    ((\xi_{(2)}\mathcal{F}_2^{-1})\rhd s)
    =\nabla^\mathcal{F}_{\xi_{\widehat{(1)}}\rhd X}(\xi_{\widehat{(2)}}\rhd s)
\end{align*}
shows that $\nabla^\mathcal{F}$ is $H_\mathcal{F}$-equivariant, while
\begin{align*}
    \nabla^\mathcal{F}_{a\cdot_\mathcal{F}X}s
    =&((\mathcal{F}_{1(1)}^{-1}\mathcal{F}_1^{'-1})\rhd a)
    \cdot(\nabla^\mathcal{R}_{(\mathcal{F}_{1(2)}^{-1}\mathcal{F}_2^{'-1})\rhd X}
    (\mathcal{F}_2^{-1}\rhd s))\\
    =&(\mathcal{F}_{1}^{-1}\rhd a)
    \cdot(\nabla^\mathcal{R}_{(\mathcal{F}_{2(1)}^{-1}\mathcal{F}_1^{'-1})\rhd X}
    ((\mathcal{F}_{2(2)}^{-1}\mathcal{F}_2^{'-1})\rhd s))\\
    =&(\mathcal{F}_{1}^{-1}\rhd a)
    \cdot(\mathcal{F}_2^{-1}\rhd(
    \nabla^\mathcal{R}_{\mathcal{F}_1^{'-1}\rhd X}
    (\mathcal{F}_2^{'-1}\rhd s)))\\
    =&a\cdot_\mathcal{F}(\nabla^\mathcal{F}_Xs)
\end{align*}
and
\begin{align*}
    \nabla^\mathcal{F}_X(a\cdot_\mathcal{F}s)
    =&\nabla^\mathcal{R}_{\mathcal{F}_1^{-1}\rhd X}(
    ((\mathcal{F}_{2(1)}^{-1}\mathcal{F}_1^{'-1})\rhd a)
    \cdot((\mathcal{F}_{2(1)}^{-1}\mathcal{F}_1^{'-1})\rhd a))\\
    =&(\mathscr{L}^\mathcal{R}_{\mathcal{F}_1^{-1}\rhd X}
    ((\mathcal{F}_{2(1)}^{-1}\mathcal{F}_1^{'-1})\rhd a))
    \cdot((\mathcal{F}_{2(2)}^{-1}\mathcal{F}_2^{'-1})\rhd s)\\
    &+((\mathcal{R}_1^{-1}\mathcal{F}_{2(1)}^{-1}\mathcal{F}_1^{'-1})\rhd a)
    \cdot(\nabla^\mathcal{R}_{(\mathcal{R}_2^{-1}\mathcal{F}_1^{-1})\rhd X}
    ((\mathcal{F}_{2(2)}^{-1}\mathcal{F}_2^{'-1})\rhd s))\\
    =&(\mathcal{F}_1^{-1}\rhd(
    \mathscr{L}^\mathcal{R}_{\mathcal{F}_1^{'-1}\rhd X}
    (\mathcal{F}_2^{'-1}\rhd a)))
    \cdot(\mathcal{F}_{2}^{-1}\rhd s)\\
    &+((\mathcal{F}_{2(1)}^{-1}\mathcal{R}_1^{-1}\mathcal{F}_2^{'-1})\rhd a)
    \cdot(\nabla^\mathcal{R}_{(\mathcal{F}_{2(2)}^{-1}\mathcal{R}_2^{-1}
    \mathcal{F}_1^{'-1})\rhd X}
    (\mathcal{F}_{2}^{-1}\rhd s))\\
    =&(\mathscr{L}^\mathcal{F}_Xa)\cdot_\mathcal{F}s
    +((\mathcal{F}_{1(1)}^{-1}\mathcal{F}_1^{-1}\mathcal{R}_{\mathcal{F}1}^{-1})
    \rhd a)\cdot
    (\nabla^\mathcal{R}_{(\mathcal{F}_{1(2)}^{-1}
    \mathcal{F}_2^{-1}\mathcal{R}_{\mathcal{F}2}^{-1})
    \rhd X}(\mathcal{F}_2^{-1}\rhd s))\\
    =&(\mathscr{L}^\mathcal{F}_Xa)\cdot_\mathcal{F}s
    +(\mathcal{R}_{\mathcal{F}1}^{-1}\rhd a)\cdot_\mathcal{F}
    (\nabla^\mathcal{F}_{\mathcal{R}_{\mathcal{F}2}\rhd X}s)
\end{align*}
are the correct linearity properties, proving that $\nabla^\mathcal{F}$
is an equivariant covariant derivative with respect to $\mathcal{R}_\mathcal{F}$.
\end{proof}
In Riemannian geometry covariant derivatives are always considered together
with a Riemannian metric. We want to generalize them to braided commutative
algebras: a $\Bbbk$-linear map
${\bf g}\colon\mathfrak{X}^1_\mathcal{R}(\mathcal{A})
\otimes_\mathcal{A}\mathfrak{X}^1_\mathcal{R}(\mathcal{A})
\rightarrow\mathcal{A}$, which is left $\mathcal{A}$-linear in the first argument
and $H$-equivariant,
is said to be an \textit{equivariant metric} if it is \textit{braided symmetric},
i.e. if ${\bf g}(Y,X)
={\bf g}(\mathcal{R}_1^{-1}\rhd X,\mathcal{R}_2^{-1}\rhd Y)$ for all
$X,Y\in\mathfrak{X}^1_\mathcal{R}(\mathcal{A})$.
It follows that ${\bf g}$ is braided right $\mathcal{A}$-linear in the first argument
as well as right $\mathcal{A}$-linear and braided left $\mathcal{A}$-linear in the
second argument. An equivariant metric
is said to be \textit{non-degenerate}
if ${\bf g}(X,Y)=0$ for all $Y\in\mathfrak{X}^1_\mathcal{R}(\mathcal{A})$ implies
$X=0$, it is said to be \textit{strongly non-degenerate}
if ${\bf g}(X,X)\neq 0$ for all $X\neq 0$ and it is said to be \textit{Riemannian} if
it is strongly non-degenerate and there is a partial order $\geq$ on $\mathcal{A}$
such that ${\bf g}(X,X)\geq 0$ for all $X\neq 0$ in addition. Note that strongly non-degeneracy implies non-degeneracy.
An equivariant covariant derivative $\nabla^\mathcal{R}\colon
\mathfrak{X}^1_\mathcal{R}(\mathcal{A})\otimes\mathfrak{X}^1_\mathcal{R}(\mathcal{A})
\rightarrow\mathfrak{X}^1_\mathcal{R}(\mathcal{A})$ on $\mathcal{A}$ is said to be a
\textit{metric equivariant covariant derivative} with respect to an equivariant metric
${\bf g}$, if
\begin{equation}\label{eq15}
    \mathscr{L}^\mathcal{R}_X({\bf g}(Y,Z))
    ={\bf g}(\nabla^\mathcal{R}_XY,Z)
    +{\bf g}(\mathcal{R}_1^{-1}\rhd Y,\nabla^\mathcal{R}_{\mathcal{R}_2^{-1}\rhd X}Z)
\end{equation}
holds for all $X,Y,Z\in\mathfrak{X}^1_\mathcal{R}(\mathcal{A})$.
Note that equivariance 
$\xi\rhd{\bf g}(X,Y)={\bf g}(\xi_{(1)}\rhd X,\xi_{(2)}\rhd Y)$ for all
$\xi\in H$ and $X,Y\in\mathfrak{X}^1_\mathcal{R}(\mathcal{A})$,
of a metric is a quite strong requirement. Similar approaches
which omit this condition are e.g. \cite{Aschieri2010,Aschieri2006,GaetanoThomas19}.
\begin{lemma}\label{lemma04}
Let ${\bf g}$ be a non-degenerate equivariant metric on $\mathcal{A}$.
Then there is a unique
torsion-free metric equivariant covariant derivative on $\mathcal{A}$.
\end{lemma}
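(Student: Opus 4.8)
The plan is to reduce the statement to a braided \emph{Koszul formula}, exactly mimicking the classical fundamental theorem of Riemannian geometry but keeping careful track of the braiding. For uniqueness, suppose $\nabla^\mathcal{R}$ is any torsion-free metric equivariant covariant derivative. Writing the metric-compatibility relation (\ref{eq15}) for the three arrangements obtained by cyclically permuting $X,Y,Z$ — where, in the braided setting, each transposition of arguments of ${\bf g}$ or of a braided Lie derivative costs a factor of $\mathcal{R}^{-1}$, to be reorganised using braided symmetry of ${\bf g}$, quasi-cocommutativity (\ref{eq03}) and the hexagon relations — and then taking the combination ``first $+$ second $-$ third'' and eliminating the unwanted covariant-derivative terms by torsion-freeness $\nabla^\mathcal{R}_XY-\nabla^\mathcal{R}_{\mathcal{R}_1^{-1}\rhd Y}(\mathcal{R}_2^{-1}\rhd X)=[X,Y]_\mathcal{R}$, one arrives at an identity of the shape
\begin{equation*}
    {\bf g}(\nabla^\mathcal{R}_XY,Z)=\Phi(X,Y,Z),
\end{equation*}
where $\Phi$ is assembled solely out of $\mathscr{L}^\mathcal{R}$, ${\bf g}$, $[\cdot,\cdot]_\mathcal{R}$ and the universal $\mathcal{R}$-matrix, with no further reference to $\nabla^\mathcal{R}$. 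Since ${\bf g}$ is non-degenerate, $\nabla^\mathcal{R}_XY$ is thereby pinned down uniquely, which proves uniqueness.

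For existence I would turn this around and \emph{define} $\nabla^\mathcal{R}_XY$ by the above formula. The first thing to check is that this is legitimate: the assignment $Z\mapsto\Phi(X,Y,Z)$ must be right $\mathcal{A}$-linear and $H$-equivariant in $Z$, so that non-degeneracy of ${\bf g}$ — which, for $\mathfrak{X}^1_\mathcal{R}(\mathcal{A})$ finitely generated projective over $\mathcal{A}$ as in the examples of interest, upgrades to invertibility of the musical map $\mathfrak{X}^1_\mathcal{R}(\mathcal{A})\to\underline{\Omega}^1_\mathcal{R}(\mathcal{A})$, $W\mapsto{\bf g}(W,\cdot)$ — produces a well-defined $\nabla^\mathcal{R}_XY\in\mathfrak{X}^1_\mathcal{R}(\mathcal{A})$. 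Then I would verify, in order: $H$-equivariance of $\nabla^\mathcal{R}$ (from equivariance of $\Phi$ and of the musical map); left $\mathcal{A}$-linearity in $X$, i.e.\ (\ref{eq11}); the braided Leibniz rule (\ref{eq12}), by expanding ${\bf g}(\nabla^\mathcal{R}_X(a\cdot s),Z)$ via $\Phi$ and comparing with ${\bf g}((\mathscr{L}^\mathcal{R}_Xa)\cdot s+(\mathcal{R}_1^{-1}\rhd a)\cdot\nabla^\mathcal{R}_{\mathcal{R}_2^{-1}\rhd X}s,Z)$; metric compatibility (\ref{eq15}), which drops out by symmetrising the defining identity; and torsion-freeness, which follows by antisymmetrising it. Each of these is a direct computation in the same style as the ones already displayed in the excerpt, the only new feature being the systematic placement of $\mathcal{R}$-legs.

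I expect the main obstacle to be precisely this bookkeeping. In the braided world cyclic permutations of arguments are no longer free, so forming the combination of the three metric-compatibility relations and then cancelling terms against torsion-freeness requires the summands to coincide \emph{on the nose}, including the exact positions of all $\mathcal{R}^{-1}$ and $S$-factors; this forces repeated, careful use of the hexagon identities, the antipode anti-homomorphism relations and the braided commutativity of $\mathcal{A}$, and it is easy to produce a formula that is off by a stray braiding. A secondary delicate point is the well-definedness of the Koszul right-hand side, i.e.\ that $\Phi(X,Y,\cdot)$ genuinely lies in the range of the musical map — this is where the full strength of non-degeneracy (together with the module-theoretic hypothesis on $\mathfrak{X}^1_\mathcal{R}(\mathcal{A})$) is used. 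Once these are handled the remaining checks are routine, and specialising to $\mathcal{R}=1\otimes1$ recovers the classical existence and uniqueness of the Levi-Civita connection.
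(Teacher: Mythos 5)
Your proposal follows essentially the same route as the paper: the paper's proof writes down the explicit braided Koszul formula $2{\bf g}(\nabla^\mathcal{R}_XY,Z)=\dots$, reads off uniqueness from non-degeneracy of ${\bf g}$, and then asserts (deferring the computation to the author's thesis) that the map defined by this formula is a torsion-free metric equivariant covariant derivative. Your additional remark that existence requires $\Phi(X,Y,\cdot)$ to lie in the image of the musical map $W\mapsto{\bf g}(W,\cdot)$ — something mere non-degeneracy in the paper's sense does not guarantee without a further hypothesis such as $\mathfrak{X}^1_\mathcal{R}(\mathcal{A})$ being finitely generated projective — is a fair observation about a point the paper also passes over silently.
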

\begin{proof}
Fix an equivariant metric ${\bf g}$ on $\mathcal{A}$. Any equivariant covariant derivative
$\nabla^\mathcal{R}$ on $\mathcal{A}$, which is torsion free and metric with
respect to ${\bf g}$, satisfies
\begin{equation}
\begin{split}
    2{\bf g}(\nabla^\mathcal{R}_XY,Z)
    =&X({\bf g}(Y,Z))
    +(\mathcal{R}_{1(1)}^{-1}\rhd Y)({\bf g}(\mathcal{R}_{1(2)}^{-1}\rhd Z,
    \mathcal{R}_2^{-1}\rhd X))\\
    &-(\mathcal{R}_{1}^{-1}\rhd Z)({\bf g}(\mathcal{R}_{2(1)}^{-1}\rhd X,
    \mathcal{R}_{2(2)}^{-1}\rhd Y))\\
    &-{\bf g}(X,[Y,Z]_\mathcal{R})
    +{\bf g}(\mathcal{R}_{1(1)}^{-1}\rhd Y,
    [\mathcal{R}_{1(2)}^{-1}\rhd Z,\mathcal{R}_2^{-1}\rhd X]_\mathcal{R})\\
    &+{\bf g}(\mathcal{R}_{1}^{-1}\rhd Z,
    [\mathcal{R}_{2(1)}^{-1}\rhd X,\mathcal{R}_{2(2)}^{-1}\rhd Y]_\mathcal{R})
\end{split}
\end{equation}
for all $X,Y,Z\in\mathfrak{X}^1_\mathcal{R}(\mathcal{A})$. In particular, this
shows the uniqueness of a torsion-free equivariant covariant derivative which is metric
with respect to ${\bf g}$, if ${\bf g}$ is non-degenerate. It remains to prove that
a $\Bbbk$-bilinear map $\nabla^\mathcal{R}$ determined by the above formula
is a metric torsion-free equivariant covariant derivative. This follows by the
(braided) linearity properties of ${\bf g}$ and the braided Leibniz rule. A full proof
can be found in \cite{ThomasPhDThesis}.
\end{proof}
The unique torsion-free metric equivariant covariant derivative on
$(\mathcal{A},{\bf g})$ is said to be
the \textit{equivariant Levi-Civita covariant derivative}. We want to remark that
Lemma~\ref{lemma04} admits a generalization in the sense that for any value
of the torsion there exists a unique metric equivariant covariant derivative.
As a last observation of this section we prove that
the twist deformation of an equivariant metric is an equivariant metric on the
twisted algebra and the assignment $\mathrm{LC}\colon{\bf g}
\mapsto\nabla^{\mathrm{LC}}$, attributing to a non-degenerate equivariant metric
its equivariant Levi-Civita covariant derivative, respects the Drinfel'd functor.
\begin{corollary}
Let ${\bf g}$ be an equivariant metric on $\mathcal{A}$. Then, the
twisted equivariant metric ${\bf g}_\mathcal{F}$, which is defined by
\begin{equation}
    {\bf g}_\mathcal{F}(X,Y)
    ={\bf g}(\mathcal{F}_1^{-1}\rhd X,\mathcal{F}_2^{-1}\rhd Y)
\end{equation}
for all $X,Y\in\mathfrak{X}^1_\mathcal{R}(\mathcal{A})$, is an equivariant metric
with respect to $\mathcal{R}_\mathcal{F}$ on $\mathcal{A}_\mathcal{F}$.
Moreover, assuming that ${\bf g}$ and ${\bf g}_\mathcal{F}$ are non-degenerate,
twisting the equivariant Levi-Civita covariant derivative with
respect to ${\bf g}$ leads to the equivariant Levi-Civita covariant derivative with
respect to ${\bf g}_\mathcal{F}$.
\end{corollary}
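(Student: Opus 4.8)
The plan is to obtain both assertions from the fact that $\mathrm{Drin}_\mathcal{F}$ is a braided monoidal equivalence (Theorem~\ref{thm02}), combined with Proposition~\ref{proposition01}, Lemma~\ref{lemma04}, and the identity $\mathcal{R}_\mathcal{F}^{-1}=\mathcal{F}\mathcal{R}^{-1}\mathcal{F}_{21}^{-1}$, equivalently $\mathcal{F}^{-1}\mathcal{R}_\mathcal{F}^{-1}=\mathcal{R}^{-1}\mathcal{F}_{21}^{-1}$. First I would observe that an equivariant metric is nothing but a morphism ${\bf g}\colon\mathfrak{X}^1_\mathcal{R}(\mathcal{A})\otimes_\mathcal{A}\mathfrak{X}^1_\mathcal{R}(\mathcal{A})\to\mathcal{A}$ in $({}_\mathcal{A}^H\mathcal{M}_\mathcal{A}^\mathcal{R},\otimes_\mathcal{A},c^\mathcal{R})$ satisfying the element-free symmetry identity ${\bf g}={\bf g}\circ c^\mathcal{R}_{\mathfrak{X}^1_\mathcal{R}(\mathcal{A}),\mathfrak{X}^1_\mathcal{R}(\mathcal{A})}$: left $\mathcal{A}$-linearity in the first slot together with the derived right $\mathcal{A}$-linearity in the second slot express precisely that ${\bf g}$ is $\mathcal{A}$-bilinear for the bimodule structure of $\otimes_\mathcal{A}$, and $H$-equivariance is part of the definition. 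Applying $\mathrm{Drin}_\mathcal{F}$ and precomposing with the monoidal natural isomorphism $\varphi$, exactly as was done to twist $\wedge_\mathcal{R}$ and $\llbracket\cdot,\cdot\rrbracket_\mathcal{R}$, produces ${\bf g}_\mathcal{F}=\mathrm{Drin}_\mathcal{F}({\bf g})\circ\varphi_{\mathfrak{X}^1_\mathcal{R}(\mathcal{A}),\mathfrak{X}^1_\mathcal{R}(\mathcal{A})}$, a morphism in $({}_{\mathcal{A}_\mathcal{F}}^{H_\mathcal{F}}\mathcal{M}_{\mathcal{A}_\mathcal{F}}^{\mathcal{R}_\mathcal{F}},\otimes_{\mathcal{A}_\mathcal{F}},c^{\mathcal{R}_\mathcal{F}})$; since the Drinfel'd functor intertwines $c^\mathcal{R}$ and $c^{\mathcal{R}_\mathcal{F}}$, the identity ${\bf g}={\bf g}\circ c^\mathcal{R}$ is carried to ${\bf g}_\mathcal{F}={\bf g}_\mathcal{F}\circ c^{\mathcal{R}_\mathcal{F}}$, which is braided symmetry with respect to $\mathcal{R}_\mathcal{F}$, and unwinding $\varphi$ and $\mathrm{Drin}_\mathcal{F}$ recovers the stated formula. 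Alternatively the three properties may be checked by hand: left $\mathcal{A}_\mathcal{F}$-linearity in the first argument from the $2$-cocycle condition exactly as in Lemma~\ref{lemma01}, $H_\mathcal{F}$-equivariance from $H$-equivariance of ${\bf g}$ and $\Delta_\mathcal{F}(\xi)=\mathcal{F}\Delta(\xi)\mathcal{F}^{-1}$, and braided symmetry from that of ${\bf g}$ together with $\mathcal{R}_\mathcal{F}=\mathcal{F}_{21}\mathcal{R}\mathcal{F}^{-1}$.

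For the Levi-Civita statement, since ${\bf g}_\mathcal{F}$ is non-degenerate by hypothesis, Lemma~\ref{lemma04} applied to the braided commutative left $H_\mathcal{F}$-module algebra $\mathcal{A}_\mathcal{F}$ and the metric ${\bf g}_\mathcal{F}$ yields a unique torsion-free metric equivariant covariant derivative with respect to $\mathcal{R}_\mathcal{F}$. Hence it suffices to show that the twisted equivariant covariant derivative $\nabla^\mathcal{F}$ attached to the equivariant Levi-Civita covariant derivative $\nabla^{\mathrm{LC}}$ of ${\bf g}$ --- which by the preceding proposition is already an equivariant covariant derivative with respect to $\mathcal{R}_\mathcal{F}$ on $\mathcal{A}_\mathcal{F}$ --- is (i) torsion-free and (ii) metric for ${\bf g}_\mathcal{F}$; uniqueness then forces $\nabla^\mathcal{F}$ to be the equivariant Levi-Civita covariant derivative of ${\bf g}_\mathcal{F}$. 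For (i) I would substitute $\nabla^\mathcal{F}_AB=\nabla^\mathcal{R}_{\mathcal{F}_1^{-1}\rhd A}(\mathcal{F}_2^{-1}\rhd B)$ and $[X,Y]_\mathcal{F}=[\mathcal{F}_1^{-1}\rhd X,\mathcal{F}_2^{-1}\rhd Y]_\mathcal{R}$ (Proposition~\ref{proposition01}) into $\mathrm{Tor}^{\nabla^\mathcal{F}}(X,Y)=\nabla^\mathcal{F}_XY-\nabla^\mathcal{F}_{\mathcal{R}_{\mathcal{F}1}^{-1}\rhd Y}(\mathcal{R}_{\mathcal{F}2}^{-1}\rhd X)-[X,Y]_\mathcal{F}$; the identity $\mathcal{F}^{-1}\mathcal{R}_\mathcal{F}^{-1}=\mathcal{R}^{-1}\mathcal{F}_{21}^{-1}$ turns the middle term into $\nabla^\mathcal{R}_{(\mathcal{R}_1^{-1}\mathcal{F}_2^{-1})\rhd Y}((\mathcal{R}_2^{-1}\mathcal{F}_1^{-1})\rhd X)$, whence the whole expression equals $\mathrm{Tor}^{\nabla^{\mathrm{LC}}}(\mathcal{F}_1^{-1}\rhd X,\mathcal{F}_2^{-1}\rhd Y)=0$. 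For (ii) I would insert $\mathscr{L}^\mathcal{F}_Xa=\mathscr{L}^\mathcal{R}_{\mathcal{F}_1^{-1}\rhd X}(\mathcal{F}_2^{-1}\rhd a)$ (Proposition~\ref{proposition01}), the formula for ${\bf g}_\mathcal{F}$, and the formula for $\nabla^\mathcal{F}$ into eq.(\ref{eq15}) written for $\mathcal{R}_\mathcal{F}$, ${\bf g}_\mathcal{F}$, $\nabla^\mathcal{F}$, then move all leading $\mathcal{F}^{-1}$-legs across the braided linearity of ${\bf g}$ and of $\mathscr{L}^\mathcal{R}$ via the inverse $2$-cocycle condition and $H$-equivariance, rewriting $\mathcal{R}_\mathcal{F}^{-1}$ as $\mathcal{F}\mathcal{R}^{-1}\mathcal{F}_{21}^{-1}$; the identity then collapses to eq.(\ref{eq15}) for $\nabla^{\mathrm{LC}}$ and ${\bf g}$ evaluated at suitably $\mathcal{F}^{-1}$-acted arguments, which holds because $\nabla^{\mathrm{LC}}$ is metric for ${\bf g}$.

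Conceptually, (i) and (ii) are just the $\mathrm{Drin}_\mathcal{F}$-images of the torsion-freeness and metricity of $\nabla^{\mathrm{LC}}$: both are morphism-level identities in the braided monoidal category, and $\mathrm{Drin}_\mathcal{F}$, a braided monoidal functor that intertwines $\mathscr{L}^\mathcal{R}$, $\mathrm{d}$, $\mathrm{i}^\mathcal{R}$ and the brackets by Proposition~\ref{proposition01} and sends $\nabla^\mathcal{R}$ to $\nabla^\mathcal{F}$ by construction, transports them verbatim. I expect the main obstacle to be precisely the bookkeeping in (ii): one must track several independent copies of $\mathcal{F}^{\pm1}$ and $\mathcal{R}^{\pm1}$ and invoke the (inverse) $2$-cocycle condition and $H$-equivariance at the right places, or --- in the categorical formulation --- carefully confirm that eq.(\ref{eq15}) can be phrased without reference to elements so that the functor applies; the torsion computation and the first assertion about ${\bf g}_\mathcal{F}$ are then routine.
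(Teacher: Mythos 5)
Your proposal is correct and follows essentially the same route as the paper: verify directly that ${\bf g}_\mathcal{F}$ is an equivariant metric with respect to $\mathcal{R}_\mathcal{F}$, then show that the twisted covariant derivative $\nabla^\mathcal{F}$ is metric for ${\bf g}_\mathcal{F}$ (via eq.~(\ref{eq15})) and torsion-free (via $\mathcal{F}^{-1}\mathcal{R}_\mathcal{F}^{-1}=\mathcal{R}^{-1}\mathcal{F}_{21}^{-1}$), and conclude by the uniqueness in Lemma~\ref{lemma04}. The categorical reformulation of braided symmetry as ${\bf g}={\bf g}\circ c^\mathcal{R}$ transported by the Drinfel'd functor is a valid added gloss on the first assertion, but the substance of the argument coincides with the paper's proof.
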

\begin{proof}
One immediately verifies that ${\bf g}_\mathcal{F}$ is an
$H_\mathcal{F}$-equivariant left $\mathcal{A}_\mathcal{F}$-linear map 
$\mathfrak{X}^1_\mathcal{R}(\mathcal{A})_\mathcal{F}
\otimes_{\mathcal{A}_\mathcal{F}}
\mathfrak{X}^1_\mathcal{R}(\mathcal{A})_\mathcal{F}
\rightarrow\mathcal{A}_\mathcal{F}$ which is braided symmetric with respect to
$\mathcal{R}_\mathcal{F}$. Via the identification
$\mathfrak{X}^1_\mathcal{R}(\mathcal{A})_\mathcal{F}\cong
\mathfrak{X}^1_{\mathcal{R}_\mathcal{F}}(\mathcal{A}_\mathcal{F})$ of
Proposition~\ref{proposition01} ${\bf g}_\mathcal{F}$ becomes an equivariant
metric on $\mathcal{A}_\mathcal{F}$.
Let $X,Y,Z\in\mathfrak{X}_\mathcal{R}^1(\mathcal{A})$ and denote the equivariant
Levi-Civita covariant derivative of ${\bf g}$ by $\nabla^\mathcal{R}$.
From eq.(\ref{eq15}) it follows that the twisted equivariant covariant derivative
$\nabla^\mathcal{F}$ satisfies
\begin{equation}
    \mathscr{L}^\mathcal{F}_X({\bf g}_\mathcal{F}(Y,Z))
    ={\bf g}_\mathcal{F}(\nabla^\mathcal{F}_XY,Z)
    +{\bf g}_\mathcal{F}(\mathcal{R}_{\mathcal{F}1}^{-1}\rhd Y,
    \nabla^\mathcal{F}_{\mathcal{R}_{\mathcal{F}2}^{-1}\rhd X}Z).
\end{equation}
Since $\mathrm{Tor}^{\nabla^\mathcal{R}}=0$ we obtain
$\mathrm{Tor}^{\nabla^\mathcal{F}}=0$, proving that $\nabla^\mathcal{F}$
is the unique equivariant Levi-Civita covariant derivative corresponding to
${\bf g}_\mathcal{F}$ if the latter is non-degenerate.
\end{proof}

\section{Submanifolds in Braided Commutative Geometry}\label{section4}

In this section we show that the braided Cartan calculus 
is compatible with the concept of submanifold algebras if the triangular
Hopf algebra respects the corresponding submanifold ideal. This can be
understood as a construction of new examples of braided Cartan calculi
from known ones. The projection to submanifold algebras respects
Drinfel'd twist gauge equivalence classes, which is an interesting
supplement to Proposition~\ref{proposition01}. 
The second subsection is devoted to
the study of equivariant covariant derivatives on submanifold algebras. Depending
on the choice of a strongly non-degenerate equivariant metric one is able to
project equivariant covariant derivatives as well as curvature and torsion if
the submanifold algebra obeys two mild axioms.
Furthermore, the notion of twisted equivariant covariant derivative and metric
are compatible with the projections. 
While the main Section~\ref{section3} stands out with quite an amount
of details, we are relatively short-spoken in the present section. The interested
reader is relegated to \cite{ThomasPhDThesis} for a more circumstantial 
discussion.
A different approach to Riemannian geometry on noncommutative submanifolds,
based on the choice of a finite-dimensional
Lie subalgebra $\mathfrak{g}$ of $\mathrm{Der}(\mathcal{A})$
and a vector space homomorphism $\mathfrak{g}\rightarrow\mathcal{M}$
into a right $\mathcal{A}$-module $\mathcal{M}$, is considered in
\cite{Arnlind2019}.
Yet another approach to noncommutative (fuzzy)
submanifolds $S$ of $\mathbb{R}^n$,
based on the imposition of an energy cutoff on a quantum particle
in $\mathbb{R}^n$, subject to a confining potential with a very sharp
minimum on $S$,
has been recently proposed and applied to spheres in \cite{FiorePisacane2018}.

\subsection{Braided Cartan Calculi on Submanifolds}

In noncommutative geometry there is a well-known notion of submanifold
ideal (c.f. \cite{Masson1995}) generalizing the concept of closed embedded
smooth submanifolds. We refer to \cite{Francesco2019} for a recent discussion of 
submanifold algebras. In braided commutative geometry the submanifold ideals
have to be respected by the Hopf algebra action in order to inherit a braided
symmetry on the quotient algebra. We continue by describing the braided Cartan
calculus on the submanifold algebra in this situation. 
It is given by the projection of the braided Cartan calculus of the ambient
algebra. Moreover, the Drinfel'd functor intertwines the submanifold algebra
projections. The following
discussion is also motivated by \cite{Fiore1997,Fiore2010}.

Fix a triangular Hopf algebra $(H,\mathcal{R})$ and a
braided commutative left $H$-module algebra $\mathcal{A}$.
For any algebra ideal $\mathcal{C}\subseteq\mathcal{A}$ the coset space
$\mathcal{A}/\mathcal{C}$ becomes an algebra with unit and product induced
from $\mathcal{A}$. The elements of $\mathcal{A}/\mathcal{C}$ are equivalence
classes of elements in $\mathcal{A}$, where $a,b\in\mathcal{A}$ are identified
if and only if there exists an element $c\in\mathcal{C}$ such that $a=b+c$. The
corresponding surjective projection is denoted by
$\mathrm{pr}\colon\mathcal{A}\ni a\mapsto a+\mathcal{C}\in\mathcal{A}/\mathcal{C}$.
If the left $H$-module action respects $\mathcal{C}$, i.e. if
$H\rhd\mathcal{C}\subseteq\mathcal{C}$, the quotient $\mathcal{A}/\mathcal{C}$
is a braided commutative left $H$-module algebra with respect to 
$\mathcal{R}$ and the
left $H$-action defined by $\xi\rhd\mathrm{pr}(a)=\mathrm{pr}(\xi\rhd a)$ for
all $a\in\mathcal{A}$. Braided vector fields on the braided commutative
algebra $\mathcal{A}/\mathcal{C}$ can be obtained as projections from a
certain class of braided vector fields on $\mathcal{A}$.
A braided derivation $X\in\mathrm{Der}_\mathcal{R}
(\mathcal{A})$ is said to be \textit{tangent to $\mathcal{C}$} if
$X(\mathcal{C})\subseteq\mathcal{C}$. The $\Bbbk$-module of all braided derivations
of $\mathcal{A}$ which are tangent to $\mathcal{C}$ is denoted by
$\mathfrak{X}^1_t(\mathcal{A})$. It is a braided Lie subalgebra and an
$H$-equivariant braided symmetric $\mathcal{A}$-sub-bimodule of
$\mathfrak{X}^1_\mathcal{R}(\mathcal{A})$. Consider the $\Bbbk$-linear map
\begin{equation}\label{eq16}
    \mathrm{pr}\colon\mathfrak{X}^1_t(\mathcal{A})\rightarrow
    \mathrm{Der}_\mathcal{R}(\mathcal{A}/\mathcal{C}),
\end{equation}
defined for any $X\in\mathfrak{X}^1_t(\mathcal{A})$ by
$\mathrm{pr}(X)(\mathrm{pr}(a))=\mathrm{pr}(X(a))$
for all $a\in\mathcal{A}$.
Note that there are left and right 
$\mathcal{A}/\mathcal{C}$-actions and
an $H$-action on the image
of (\ref{eq16}) defined by
\begin{equation}
    \xi\rhd\mathrm{pr}(X)
    =\mathrm{pr}(\xi\rhd X),~
    \mathrm{pr}(a)\cdot\mathrm{pr}(X)
    =\mathrm{pr}(a\cdot X),~
    \mathrm{pr}(X)\cdot\mathrm{pr}(a)
    =\mathrm{pr}(X\cdot a)
\end{equation}
for all $\xi\in H$, $a\in\mathcal{A}$ and
$X\in\mathfrak{X}^1_t(\mathcal{A})$. Those structure the image
$\mathrm{pr}(\mathfrak{X}^1_t(\mathcal{A}))
\subseteq\mathfrak{X}^1_\mathcal{R}(\mathcal{A})$ as an $H$-equivariant
braided symmetric $\mathcal{A}/\mathcal{C}$-sub-bimodule and braided Lie subalgebra.
On the other hand $\mathrm{pr}(\mathfrak{X}^1_t(\mathcal{A}))$ can be viewed
as an $H$-equivariant braided symmetric $\mathcal{A}$-bimodule with
$\mathcal{A}$-actions $a\cdot\mathrm{pr}(X)=\mathrm{pr}(a)\cdot\mathrm{pr}(X)$
and $\mathrm{pr}(X)\cdot a=\mathrm{pr}(X)\cdot\mathrm{pr}(a)$ for all
$a\in\mathcal{A}$ and $X\in\mathfrak{X}^1_t(\mathcal{A})$.
With respect to the latter structures
(\ref{eq16}) becomes a homomorphism of $H$-equivariant braided symmetric
$\mathcal{A}$-bimodules and braided Lie algebras. It follows that the kernel
$\mathfrak{X}^1_0(\mathcal{A})$ of (\ref{eq16}) is an $H$-equivariant braided
symmetric $\mathcal{A}$-sub-bimodule and a braided Lie ideal of
$\mathfrak{X}^1_t(\mathcal{A})$.
\begin{definition}
An algebra ideal $\mathcal{C}\subseteq\mathcal{A}$
is said to be a \textit{submanifold ideal} and the quotient $\mathcal{A}/\mathcal{C}$
is said to be a \textit{submanifold algebra}
if there is a short exact sequence
\begin{equation}\label{eq17}
    0\rightarrow\mathfrak{X}^1_0(\mathcal{A})\rightarrow\mathfrak{X}^1_t(\mathcal{A})
    \xrightarrow{\mathrm{pr}}\mathrm{Der}_\mathcal{R}(\mathcal{A}/\mathcal{C})
    \rightarrow 0
\end{equation}
of $H$-equivariant braided symmetric $\mathcal{A}$-bimodules
and braided Lie algebras.
\end{definition}
Fix a submanifold ideal $\mathcal{C}$ of $\mathcal{A}$ in the following. The
short exact sequence (\ref{eq17})
extends to a short exact sequence
\begin{equation}
    0\rightarrow\mathfrak{X}^\bullet_0(\mathcal{A})
    \rightarrow\mathfrak{X}^\bullet_t(\mathcal{A})
    \xrightarrow{\mathrm{pr}}
    \mathfrak{X}^\bullet_\mathcal{R}(\mathcal{A}/\mathcal{C})
    \rightarrow 0
\end{equation}
of braided Gerstenhaber algebras
by defining inductively $\mathrm{pr}(X\wedge_\mathcal{R}Y)
=(\mathrm{pr}(X))\wedge_\mathcal{R}(\mathrm{pr}(Y))$ for all
$X,Y\in\mathfrak{X}^\bullet_t(\mathcal{A})$,
where $\mathfrak{X}^\bullet_0(\mathcal{A})$ and
$\mathfrak{X}^\bullet_t(\mathcal{A})$ denote the braided exterior
algebras of $\mathfrak{X}^1_0(\mathcal{A})$ and $\mathfrak{X}^1_t(\mathcal{A})$,
respectively. In particular
$\mathrm{pr}(\llbracket X,Y\rrbracket_\mathcal{R})
=\llbracket\mathrm{pr}(X),\mathrm{pr}(Y)\rrbracket_\mathcal{R}$ holds
for all $X,Y\in\mathfrak{X}^\bullet_t(\mathcal{A})$.
For braided differential forms $\omega=a_0\cdot\mathrm{d}a_1\wedge_\mathcal{R}
\cdots\wedge_\mathcal{R}\mathrm{d}a_n
\in\Omega^\bullet_\mathcal{R}(\mathcal{A})$ one defines
\begin{equation}
    \mathrm{pr}(\omega)
    =\mathrm{pr}(a_0)\mathrm{d}(\mathrm{pr}(a_1))\wedge_\mathcal{R}\cdots
    \wedge_\mathcal{R}\mathrm{d}(\mathrm{pr}(a_n)),
\end{equation}
leading to a short exact sequence of differential graded algebras
\begin{equation}
    0\rightarrow\mathrm{ker}(\mathrm{pr})
    \rightarrow\Omega^\bullet_\mathcal{R}(\mathcal{A})
    \xrightarrow{\mathrm{pr}}\Omega^\bullet_\mathcal{R}(\mathcal{A}/\mathcal{C})
    \rightarrow 0,
\end{equation}
where $\mathrm{ker}(\mathrm{pr})
=\bigoplus_{k\geq 0}\mathrm{ker}(\mathrm{pr})^k$ is defined recursively
by $\mathrm{ker}(\mathrm{pr})^0=\mathcal{C}$ and
\begin{equation}
    \mathrm{ker}(\mathrm{pr})^{k+1}
    =\{\omega\in\Omega^{k+1}_\mathcal{R}(\mathcal{A})~|~
    \mathrm{i}^\mathcal{R}_X\omega\in\mathrm{ker}(\mathrm{pr})^k
    \text{ for all }X\in\mathfrak{X}^1_t(\mathcal{A})\}
\end{equation}
for $k\geq 0$.
As in the case of $\mathfrak{X}^1_\mathcal{R}(\mathcal{A}/\mathcal{C})$,
the projected actions, defined by intertwining the projections, structure
$\Omega^\bullet_\mathcal{R}(\mathcal{A}/\mathcal{C})$ as an object
in ${}^H_{\mathcal{A}/\mathcal{C}}
\mathcal{M}^\mathcal{R}_{\mathcal{A}/\mathcal{C}}$. 
\begin{theorem}\label{lemma03}
The braided Cartan calculus on $\mathcal{A}/\mathcal{C}$ is the projection
of the braided Cartan calculus on $\mathcal{A}$. Namely,
\begin{equation}\label{eq18}
    \mathscr{L}^\mathcal{R}_{\mathrm{pr}(X)}\mathrm{pr}(\omega)
    =\mathrm{pr}(\mathscr{L}^\mathcal{R}_X\omega),~
    \mathrm{i}^\mathcal{R}_{\mathrm{pr}(X)}\mathrm{pr}(\omega)
    =\mathrm{pr}(\mathrm{i}^\mathcal{R}_X\omega)
    \text{ and }
    \mathrm{d}(\mathrm{pr}(\omega))=\mathrm{pr}(\mathrm{d}\omega)
\end{equation}
for all $X\in\mathfrak{X}^\bullet_t(\mathcal{A})$ and 
$\omega\in\Omega^\bullet_\mathcal{R}(\mathcal{A})$.
\end{theorem}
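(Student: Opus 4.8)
The plan is to reduce all three identities to two basic building blocks --- the de Rham differential and the insertion of a single braided derivation --- and then propagate them by multiplicativity and the graded braided Leibniz rule. Since every element of $\Omega^k_\mathcal{R}(\mathcal{A})$ is a finite $\Bbbk$-linear combination of elements $a_0\,\mathrm{d}a_1\wedge_\mathcal{R}\cdots\wedge_\mathcal{R}\mathrm{d}a_k$, and every element of $\mathfrak{X}^\bullet_t(\mathcal{A})$ is a sum of $\wedge_\mathcal{R}$-products of elements of $\mathfrak{X}^1_t(\mathcal{A})$, it suffices to verify (\ref{eq18}) on such generators, exploiting that $\mathrm{pr}$ is an $H$-equivariant homomorphism of braided Graßmann algebras on forms, that $\mathrm{pr}(X\wedge_\mathcal{R}Y)=\mathrm{pr}(X)\wedge_\mathcal{R}\mathrm{pr}(Y)$ on multivector fields, that $\mathrm{i}^\mathcal{R}_{X\wedge_\mathcal{R}Y}=\mathrm{i}^\mathcal{R}_X\mathrm{i}^\mathcal{R}_Y$, and that $\mathscr{L}^\mathcal{R}_{X\wedge_\mathcal{R}Y}=\mathrm{i}^\mathcal{R}_X\mathscr{L}^\mathcal{R}_Y+(-1)^\ell\mathscr{L}^\mathcal{R}_X\mathrm{i}^\mathcal{R}_Y$ established earlier.

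First I would treat $\mathrm{d}$. By the definition of $\mathrm{pr}$ on forms one has $\mathrm{pr}(\mathrm{d}a)=\mathrm{d}(\mathrm{pr}(a))$ for $a\in\mathcal{A}$, and the graded Leibniz rule for $\mathrm{d}$ together with $\mathrm{d}^2=0$ gives $\mathrm{d}(a_0\,\mathrm{d}a_1\wedge_\mathcal{R}\cdots\wedge_\mathcal{R}\mathrm{d}a_k)=\mathrm{d}a_0\wedge_\mathcal{R}\mathrm{d}a_1\wedge_\mathcal{R}\cdots\wedge_\mathcal{R}\mathrm{d}a_k$; comparing with the analogous identity in $\Omega^\bullet_\mathcal{R}(\mathcal{A}/\mathcal{C})$ and using multiplicativity of $\mathrm{pr}$ yields $\mathrm{d}(\mathrm{pr}(\omega))=\mathrm{pr}(\mathrm{d}\omega)$. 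Next I would establish the insertion identity for $X\in\mathfrak{X}^1_t(\mathcal{A})$: the base case $\omega=\mathrm{d}a$ is precisely the defining relation $\mathrm{pr}(X)(\mathrm{pr}(a))=\mathrm{pr}(X(a))$, and the inductive step uses that $\mathrm{i}^\mathcal{R}_X$ is a graded braided derivation of degree $-1$ of $\wedge_\mathcal{R}$, together with the linearity relations for $\mathrm{i}^\mathcal{R}$ in its form argument and, crucially, the fact that $\mathrm{pr}$ intertwines the left $H$-action ($\xi\rhd\mathrm{pr}(X)=\mathrm{pr}(\xi\rhd X)$ and $\xi\rhd\mathrm{pr}(\omega)=\mathrm{pr}(\xi\rhd\omega)$) so that every $\mathcal{R}^{-1}$-leg passes through $\mathrm{pr}$ unchanged. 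The Lie derivative identity for $X\in\mathfrak{X}^1_t(\mathcal{A})$ is then formal: since $\mathrm{d}$ is equivariant the braided commutator collapses and $\mathscr{L}^\mathcal{R}_X=\mathrm{i}^\mathcal{R}_X\circ\mathrm{d}+\mathrm{d}\circ\mathrm{i}^\mathcal{R}_X$, so the first two steps combine to give $\mathscr{L}^\mathcal{R}_{\mathrm{pr}(X)}\mathrm{pr}(\omega)=\mathrm{pr}(\mathscr{L}^\mathcal{R}_X\omega)$.

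Finally I would pass from degree $1$ to arbitrary $X\in\mathfrak{X}^\bullet_t(\mathcal{A})$ by induction on the number of $\wedge_\mathcal{R}$-factors: for the insertion via $\mathrm{i}^\mathcal{R}_{X\wedge_\mathcal{R}Y}=\mathrm{i}^\mathcal{R}_X\mathrm{i}^\mathcal{R}_Y$ and multiplicativity of $\mathrm{pr}$, and for the Lie derivative via the graded braided Leibniz rule, which reduces the claim for $X\wedge_\mathcal{R}Y$ to the claims for $X$ and $Y$. Together with the surjectivity of $\mathrm{pr}$ furnished by the short exact sequence (\ref{eq17}), this shows that $\mathscr{L}^\mathcal{R}$, $\mathrm{i}^\mathcal{R}$ and $\mathrm{d}$ of the braided Cartan calculus on $\mathcal{A}/\mathcal{C}$ agree with the projected operations on all of $\mathfrak{X}^\bullet_\mathcal{R}(\mathcal{A}/\mathcal{C})$ and $\Omega^\bullet_\mathcal{R}(\mathcal{A}/\mathcal{C})$. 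I expect the main obstacle to be not any single identity but the bookkeeping in the inductive step for $\mathrm{i}^\mathcal{R}$: one must check that $\mathrm{pr}$, which is at once an $H$-equivariant algebra homomorphism on forms and an intertwiner of the $\mathcal{A}$-module and derivation actions, is compatible with the braided Leibniz rule together with all of its $\mathcal{R}^{-1}$-legs; the well-definedness of $\mathrm{pr}$ on $\mathfrak{X}^1_t(\mathcal{A})$ and on forms --- where the tangency condition $X(\mathcal{C})\subseteq\mathcal{C}$ and the hypothesis $H\rhd\mathcal{C}\subseteq\mathcal{C}$ enter --- is what legitimizes this. The full computation is carried out in \cite{ThomasPhDThesis}.
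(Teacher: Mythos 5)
Your proof is correct and follows essentially the same route as the paper: the published proof simply notes that the identities are checked on forms of degree $0$ and $1$ and then propagated using that $\mathrm{pr}$ is a homomorphism of the braided wedge product, which is exactly your reduction to generators $a_0\,\mathrm{d}a_1\wedge_\mathcal{R}\cdots\wedge_\mathcal{R}\mathrm{d}a_k$ together with the derivation properties of $\mathrm{i}^\mathcal{R}$, $\mathrm{d}$ and $\mathscr{L}^\mathcal{R}$ and the $H$-equivariance of $\mathrm{pr}$. Your additional induction over the multivector-field degree via $\mathrm{i}^\mathcal{R}_{X\wedge_\mathcal{R}Y}=\mathrm{i}^\mathcal{R}_X\mathrm{i}^\mathcal{R}_Y$ and Lemma~\ref{lemma02} is a legitimate fleshing-out of what the paper leaves implicit.
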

\begin{proof}
Equations (\ref{eq18}) are easily verified on braided differential forms
of order $0$ and $1$. Since $\mathrm{pr}$ is a homomorphism of the braided
wedge product the claim follows.
\end{proof}
As a special case we recover that the Cartan calculus on a
closed embedded submanifold $\iota\colon N\rightarrow M$ of a smooth manifold
$M$ is obtained by the pullback 
$\iota^*\colon\Omega^\bullet(M)\rightarrow\Omega^\bullet(N)$ of differential forms
and restriction $\iota^*\colon\mathfrak{X}_t^\bullet(M)\rightarrow
\mathfrak{X}^\bullet(N)$ of tangent multivector fields to $N$.
The latter is defined for any $X\in\mathfrak{X}^1_t(M)$ as the unique vector field
$X|_N\in\mathfrak{X}^1(N)$, which is $\iota$-related to $X$, i.e.
$T_q\iota(X|_N)_q=X_{\iota(q)}$ for all $q\in N$, where
$T_q\iota\colon T_qN\rightarrow T_{\iota(q)}M$ denotes the tangent map
(c.f. \cite{Lee2003}~Lemma~5.39).
In particular,
\begin{align*}
    \mathscr{L}_{\iota^*(X)}\iota^*(\omega)
    =\iota^*(\mathscr{L}_X\omega),~~
    \mathrm{i}_{\iota^*(X)}\iota^*(\omega)
    =\iota^*(\mathrm{i}_X\omega)
    \text{ and }
    \mathrm{d}\iota^*(\omega)
    =\iota^*(\mathrm{d}\omega)
\end{align*}
for all $X\in\mathfrak{X}^\bullet(M)$ and $\omega\in\Omega^\bullet(M)$.

In the next proposition we prove that the gauge equivalence given by the Drinfel'd functor
is compatible with the notion of submanifold ideal, i.e. the projection
to submanifold algebras and twisting commute. In the particular case of a
cocommutative Hopf algebra with trivial triangular structure this means that
twist quantization and projection to the submanifold algebra commute
(see also \cite{GaetanoThomas19}).
\begin{proposition}
For any twist $\mathcal{F}$ on $H$,
the submanifold algebra projection of the twist deformation 
$(\mathfrak{X}_t^\bullet(\mathcal{A})_\mathcal{F},\wedge_\mathcal{F},
\llbracket\cdot,\cdot\rrbracket_\mathcal{F})$ of the braided
Gerstenhaber algebra of tangent multivector fields on $\mathcal{A}$
coincides with the twist deformation
$(\mathfrak{X}^\bullet_\mathcal{R}(\mathcal{A}/\mathcal{C})_\mathcal{F},
\wedge_\mathcal{F},
\llbracket\cdot,\cdot\rrbracket_\mathcal{F})$
of the braided Gerstenhaber algebra of braided multivector fields on
$\mathcal{A}/\mathcal{C}$.
Moreover, the twisted Cartan calculus on $\mathcal{A}/\mathcal{C}$
is given by the projection of the twisted Cartan calculus on $\mathcal{A}$.
Namely, $\Omega^\bullet_\mathcal{R}(\mathcal{A}/\mathcal{C})_\mathcal{F}
=\mathrm{pr}(\Omega^\bullet_\mathcal{R}(\mathcal{A})_\mathcal{F})$,
\begin{equation}
    \mathscr{L}^\mathcal{F}_{\mathrm{pr}(X)}\mathrm{pr}(\omega)
    =\mathrm{pr}(\mathscr{L}^\mathcal{F}_X\omega),~
    \mathrm{i}^\mathcal{F}_{\mathrm{pr}(X)}\mathrm{pr}(\omega)
    =\mathrm{pr}(\mathrm{i}^\mathcal{F}_X\omega)
    \text{ and }
    \mathrm{d}(\mathrm{pr}(\omega))
    =\mathrm{pr}(\mathrm{d}\omega)
\end{equation}
for all $X\in\mathfrak{X}^\bullet_t(\mathcal{A})_\mathcal{F}$ and
$\omega\in\Omega^\bullet_\mathcal{R}(\mathcal{A})_\mathcal{F}$.
\end{proposition}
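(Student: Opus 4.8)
The plan is to combine the two gauge-equivalence results already established: Proposition~\ref{proposition01}, which says that twisting the braided Cartan calculus on $\mathcal{A}$ reproduces the braided Cartan calculus on $\mathcal{A}_\mathcal{F}$ with respect to $\mathcal{R}_\mathcal{F}$ via the isomorphism ${}^\mathcal{F}$, and Theorem~\ref{lemma03}, which says that the braided Cartan calculus on the submanifold algebra $\mathcal{A}/\mathcal{C}$ is the projection of the one on $\mathcal{A}$. The key point to verify is that the isomorphism ${}^\mathcal{F}$ and the projection $\mathrm{pr}$ commute, i.e.\ that ${}^\mathcal{F}$ maps $\mathrm{ker}(\mathrm{pr})$ into $\mathrm{ker}(\mathrm{pr})$ and descends to the isomorphism ${}^\mathcal{F}$ on $\mathcal{A}/\mathcal{C}$ realizing the identification $\Omega^\bullet_\mathcal{R}(\mathcal{A}/\mathcal{C})_\mathcal{F}\cong\Omega^\bullet_{\mathcal{R}_\mathcal{F}}((\mathcal{A}/\mathcal{C})_\mathcal{F})$.

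First I would observe that since $H\rhd\mathcal{C}\subseteq\mathcal{C}$, the twist-deformed product $a\cdot_\mathcal{F}b=(\mathcal{F}_1^{-1}\rhd a)\cdot(\mathcal{F}_2^{-1}\rhd b)$ still has $\mathcal{C}$ as an ideal, so $(\mathcal{A}/\mathcal{C})_\mathcal{F}=\mathcal{A}_\mathcal{F}/\mathcal{C}$ as braided commutative left $H_\mathcal{F}$-module algebras with respect to $\mathcal{R}_\mathcal{F}$; this gives the identity $\Omega^0$-level statement. Next I would check that a braided derivation $X$ is tangent to $\mathcal{C}$ (i.e.\ $X(\mathcal{C})\subseteq\mathcal{C}$) if and only if its twist $X^\mathcal{F}$ satisfies $X^\mathcal{F}(\mathcal{C})\subseteq\mathcal{C}$, since $X^\mathcal{F}(c)=(\mathcal{F}_1^{-1}\rhd X)(\mathcal{F}_2^{-1}\rhd c)$ and $H$ preserves $\mathcal{C}$; hence $\mathfrak{X}^1_t(\mathcal{A})_\mathcal{F}$ is carried to the corresponding module of tangent braided derivations for $\mathcal{R}_\mathcal{F}$, and likewise $\mathfrak{X}^1_0(\mathcal{A})_\mathcal{F}$ to its kernel analogue. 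Compatibility with $\mathrm{pr}$ on vector fields then reads $\mathrm{pr}(X)^\mathcal{F}=\mathrm{pr}(X^\mathcal{F})$, which follows directly from unwinding both definitions. Extending via $\wedge_\mathcal{F}$ (using eq.(\ref{eq10}) and that $\mathrm{pr}$ is a $\wedge$-homomorphism), the short exact sequence of braided Gerstenhaber algebras for $\mathcal{A}$ twists to the one for $\mathcal{A}/\mathcal{C}$, giving the first claim of the proposition.

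For the differential-form side, I would argue that $\mathrm{pr}(\Omega^\bullet_\mathcal{R}(\mathcal{A})_\mathcal{F})=\Omega^\bullet_\mathcal{R}(\mathcal{A}/\mathcal{C})_\mathcal{F}$ by noting that both are generated as differential graded algebras by $\mathcal{A}/\mathcal{C}$ and $\mathrm{d}(\mathcal{A}/\mathcal{C})$ under $\wedge_\mathcal{F}$, since twisting leaves $\mathrm{d}$ undeformed and $\mathrm{pr}$ intertwines $\mathrm{d}$ by Theorem~\ref{lemma03}. The three displayed identities then follow by transporting the corresponding identities of Theorem~\ref{lemma03} through the isomorphism ${}^\mathcal{F}$: using $\mathscr{L}^\mathcal{F}_X\omega=\mathscr{L}^\mathcal{R}_{\mathcal{F}_1^{-1}\rhd X}(\mathcal{F}_2^{-1}\rhd\omega)$ and $\mathrm{i}^\mathcal{F}_X\omega=\mathrm{i}^\mathcal{R}_{\mathcal{F}_1^{-1}\rhd X}(\mathcal{F}_2^{-1}\rhd\omega)$, together with $\mathrm{pr}(\xi\rhd(-))=\xi\rhd\mathrm{pr}(-)$ and the untwisted relations $\mathrm{pr}(\mathscr{L}^\mathcal{R}_X\omega)=\mathscr{L}^\mathcal{R}_{\mathrm{pr}(X)}\mathrm{pr}(\omega)$ etc., one computes
\begin{align*}
    \mathrm{pr}(\mathscr{L}^\mathcal{F}_X\omega)
    &=\mathrm{pr}\big(\mathscr{L}^\mathcal{R}_{\mathcal{F}_1^{-1}\rhd X}(\mathcal{F}_2^{-1}\rhd\omega)\big)
    =\mathscr{L}^\mathcal{R}_{\mathrm{pr}(\mathcal{F}_1^{-1}\rhd X)}\mathrm{pr}(\mathcal{F}_2^{-1}\rhd\omega)\\
    &=\mathscr{L}^\mathcal{R}_{\mathcal{F}_1^{-1}\rhd\mathrm{pr}(X)}(\mathcal{F}_2^{-1}\rhd\mathrm{pr}(\omega))
    =\mathscr{L}^\mathcal{F}_{\mathrm{pr}(X)}\mathrm{pr}(\omega),
\end{align*}
and identically for $\mathrm{i}^\mathcal{F}$; the statement for $\mathrm{d}$ is immediate since $\mathrm{d}$ is undeformed. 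The main obstacle is really just the bookkeeping in showing that ${}^\mathcal{F}$ respects the recursively defined $\mathrm{ker}(\mathrm{pr})$ — one must check by induction on degree that $\omega\in\mathrm{ker}(\mathrm{pr})^k$ iff $\omega^\mathcal{F}\in\mathrm{ker}(\mathrm{pr})^k$, using $(\mathrm{i}^\mathcal{F}_X\omega)^\mathcal{F}=\mathrm{i}^{\mathcal{R}_\mathcal{F}}_{X^\mathcal{F}}\omega^\mathcal{F}$ and the tangency equivalence for braided derivations — but this is routine once the degree-$0$ and degree-$1$ cases are in hand. A full proof is in \cite{ThomasPhDThesis}.
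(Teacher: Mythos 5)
Your proof is correct and follows essentially the same route as the paper's: unfold the twisted operations via the Drinfel'd functor formulas, use $H$-equivariance of $\mathrm{pr}$ to commute it past the legs of $\mathcal{F}^{-1}$, and invoke the untwisted projection identities of Theorem~\ref{lemma03}; your displayed computation for $\mathscr{L}^\mathcal{F}$ is the paper's computation read in the opposite direction. The additional verifications you flag (that $\mathcal{C}$ remains an ideal for $\cdot_\mathcal{F}$, the tangency equivalence, and the compatibility of the twist with $\mathrm{ker}(\mathrm{pr})$) are points the paper leaves implicit, so including them only strengthens the argument.
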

\begin{proof}
Note that the twist deformation of $\mathfrak{X}^\bullet_t(\mathcal{A})$ is a
braided Gerstenhaber algebra since the braided multivector fields which are 
tangent to $\mathcal{C}$ are an
$H$-submodule and a braided symmetric $\mathcal{A}$-sub-bimodule of
$\mathfrak{X}^\bullet_\mathcal{R}(\mathcal{A})$.
We already noticed that $\mathrm{pr}\colon\mathfrak{X}^\bullet_t(\mathcal{A})
\rightarrow\mathfrak{X}^\bullet_\mathcal{R}(\mathcal{A}/\mathcal{C})$ is
surjective.
Let $X,Y\in\mathfrak{X}^\bullet_t(\mathcal{A})_\mathcal{F}$
and $a\in\mathcal{A}$. Then
$$
\mathrm{pr}(X)\wedge_\mathcal{F}\mathrm{pr}(Y)
=(\mathcal{F}_1^{-1}\rhd\mathrm{pr}(X))
\wedge_\mathcal{R}(\mathcal{F}_2^{-1}\rhd\mathrm{pr}(Y))
=\mathrm{pr}(X\wedge_\mathcal{F}Y),
$$
and similarly $\llbracket\mathrm{pr}(X),\mathrm{pr}(Y)\rrbracket_\mathcal{F}
=\mathrm{pr}(\llbracket X,Y\rrbracket_\mathcal{F})$
and $\mathrm{pr}(a)\cdot_\mathcal{F}\mathrm{pr}(X)
=\mathrm{pr}(a\cdot_\mathcal{F}X)$ follow.
Moreover,
$$
\mathscr{L}^\mathcal{F}_{\mathrm{pr}(X)}\mathrm{pr}(\omega)
=\mathscr{L}^\mathcal{R}_{\mathcal{F}_1^{-1}\rhd\mathrm{pr}(X)}
(\mathcal{F}_2^{-1}\rhd\mathrm{pr}(\omega))
=\mathrm{pr}(\mathscr{L}^\mathcal{F}_X\omega)
$$
and
$$
\mathrm{i}^\mathcal{F}_{\mathrm{pr}(X)}\mathrm{pr}(\omega)
=\mathrm{i}^\mathcal{R}_{\mathcal{F}_1^{-1}\rhd\mathrm{pr}(X)}
(\mathcal{F}_2^{-1}\rhd\mathrm{pr}(\omega))
=\mathrm{pr}(\mathrm{i}^\mathcal{F}_X\omega)
$$
for all $X\in\mathfrak{X}^\bullet_t(\mathcal{A})$ and 
$\omega\in\Omega^\bullet_\mathcal{R}(\mathcal{A})$ by Theorem~\ref{lemma03}.
This concludes the proof of the proposition.
\end{proof}

\subsection{Equivariant Covariant Derivatives on Submanifolds}

In this section we discuss equivariant covariant derivatives on submanifold algebras
and study under which conditions equivariant covariant derivatives and metrics
allow for projections. Accepting two mild axioms the latter is possible for a
given strongly non-degenerate equivariant metric.
Furthermore, the projection of the equivariant covariant derivative is compatible
with the notion of curvature, torsion and twist deformation. 

Fix a submanifold ideal $\mathcal{C}$
of $\mathcal{A}$ and a strongly non-degenerate equivariant metric ${\bf g}$ on 
$\mathcal{A}$. Then
there is a direct sum decomposition
\begin{equation}
    \mathfrak{X}^1_\mathcal{R}(\mathcal{A})
    =\mathfrak{X}^1_t(\mathcal{A})\oplus\mathfrak{X}^1_n(\mathcal{A}),
\end{equation}
where $\mathfrak{X}^1_n(\mathcal{A})$ are the so-called \textit{braided normal
vector fields} with respect to $\mathcal{C}$ and ${\bf g}$, defined to be the
subspace orthogonal to $\mathfrak{X}^1_t(\mathcal{A})$ with respect to ${\bf g}$.
Then, $\mathrm{pr}_{\bf g}\colon\mathfrak{X}^1_\mathcal{R}(\mathcal{A})
\rightarrow\mathfrak{X}^1_\mathcal{R}(\mathcal{A}/\mathcal{C})$ is the $\Bbbk$-linear map
which first projects to the first addend in the above decomposition and applies
$\mathrm{pr}\colon\mathfrak{X}^1_t(\mathcal{A})\rightarrow\mathfrak{X}^1_\mathcal{R}
(\mathcal{A}/\mathcal{C})$ afterwards. In particular $\mathrm{pr}_{\bf g}(X)
=\mathrm{pr}(X)$ for all $X\in\mathfrak{X}^1_t(\mathcal{A})$. 
In a next step we define a 
$\Bbbk$-linear map ${\bf g}_{\mathcal{A}/\mathcal{C}}\colon
\mathfrak{X}^1_\mathcal{R}(\mathcal{A}/\mathcal{C})
\otimes_{\mathcal{A}/\mathcal{C}}
\mathfrak{X}^1_\mathcal{R}(\mathcal{A}/\mathcal{C})
\rightarrow\mathcal{A}/\mathcal{C}$ by
\begin{equation}
    {\bf g}_{\mathcal{A}/\mathcal{C}}(\mathrm{pr}_{\bf g}(X),\mathrm{pr}_{\bf g}(Y))
    =\mathrm{pr}_{\bf g}({\bf g}(X,Y))
\end{equation}
for all $X,Y\in\mathfrak{X}^1_\mathcal{R}(\mathcal{A})$. It is well-defined if
$\mathfrak{X}^1_0(\mathcal{A})=\ker\mathrm{pr}$ has the following property.
\begin{equation*}
\begin{split}
    \textbf{Axiom 1: }&
    \text{for every $X\in\mathfrak{X}^1_0(\mathcal{A})$ there are finitely many
    $c_i\in\mathcal{C}$ and $X^i\in\mathfrak{X}^1_t(\mathcal{A})$}\\
    &\text{such that $X=\sum_{i}c_iX^i$.}
\end{split}
\end{equation*}
This is for example the case if $\mathfrak{X}^1_0(\mathcal{A})$ is finitely
generated as a $\mathcal{C}$-bimodule. If ${\bf g}$ is non-degenerate the
projection ${\bf g}_{\mathcal{A}/\mathcal{C}}$ is not non-degenerate in general.
However, if we assume the following property of ${\bf g}$, the projection
${\bf g}_{\mathcal{A}/\mathcal{C}}$ is strongly non-degenerate if ${\bf g}$ is.
\begin{equation*}
    \textbf{Axiom 2: }
    \text{if $X\in\mathfrak{X}^1_t(\mathcal{A})$, then ${\bf g}(X,X)\in\mathcal{C}$
    implies $X\in\mathfrak{X}^1_0(\mathcal{A})$.}
\end{equation*}
Note that in the case of closed embedded smooth manifolds both axiom 1 and 2 are
satisfied.
\begin{proposition}\label{lemma05}
For any strongly
non-degenerate equivariant metric ${\bf g}$ on $\mathcal{A}$ such that the axioms 1 and 2 
are satisfied, ${\bf g}_{\mathcal{A}/\mathcal{C}}$ is a well-defined strongly
non-degenerate equivariant metric on $\mathcal{A}/\mathcal{C}$.
The projection
\begin{equation}
    \nabla^{\mathcal{A}/\mathcal{C}}_{\mathrm{pr}(X)}\mathrm{pr}(Y)
    =\mathrm{pr}_{\bf g}(\nabla^\mathcal{R}_XY),
\end{equation}
of an equivariant covariant derivative $\nabla^\mathcal{R}\colon
\mathfrak{X}^1_\mathcal{R}(\mathcal{A})
\otimes\mathfrak{X}^1_\mathcal{R}(\mathcal{A})
\rightarrow\mathfrak{X}^1_\mathcal{R}(\mathcal{A})$ 
on $\mathcal{A}$,
where $X,Y\in\mathfrak{X}^1_t(\mathcal{A})$, is an equivariant covariant derivative
with respect to $\mathcal{R}$ on $\mathcal{A}/\mathcal{C}$.
If furthermore,
$\nabla^\mathcal{R}$ is the equivariant Levi-Civita covariant derivative with
respect to ${\bf g}$, $\nabla^{\mathcal{A}/\mathcal{C}}$ is the equivariant Levi-Civita
covariant derivative on $\mathcal{A}/\mathcal{C}$ with respect to
${\bf g}_{\mathcal{A}/\mathcal{C}}$.
\end{proposition}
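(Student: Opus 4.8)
The plan is to check that each structure claimed on $\mathcal{A}/\mathcal{C}$ is the ``transport along $\mathrm{pr}$'' of the corresponding structure on $\mathcal{A}$, so that the only genuine work is well-definedness, everything else being inherited. Write $\mathrm{pr}_t\colon\mathfrak{X}^1_\mathcal{R}(\mathcal{A})\to\mathfrak{X}^1_t(\mathcal{A})$ for the projection onto the first summand of the given decomposition, so that $\mathrm{pr}_{\bf g}=\mathrm{pr}\circ\mathrm{pr}_t$. Two preliminary observations are needed. First, $\mathfrak{X}^1_n(\mathcal{A})$ is again an $H$-equivariant braided symmetric $\mathcal{A}$-sub-bimodule: for $H$-stability one uses $H$-equivariance of ${\bf g}$ and $H$-stability of $\mathfrak{X}^1_t(\mathcal{A})$ to write, for $N\in\mathfrak{X}^1_n(\mathcal{A})$, $\xi\in H$ and $X\in\mathfrak{X}^1_t(\mathcal{A})$, ${\bf g}(\xi\rhd N,X)=\xi_{(1)}\rhd{\bf g}(N,S(\xi_{(2)})\rhd X)=0$, and the $\mathcal{A}$-bimodule statement is analogous using braided symmetry and left $\mathcal{A}$-linearity of ${\bf g}$. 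Hence $\mathrm{pr}_t$, and therefore $\mathrm{pr}_{\bf g}$, is $H$-equivariant and $\mathcal{A}$-bilinear. Second, by braided symmetry together with this $H$-stability, ${\bf g}$ vanishes on $\mathfrak{X}^1_t(\mathcal{A})\times\mathfrak{X}^1_n(\mathcal{A})$ and on $\mathfrak{X}^1_n(\mathcal{A})\times\mathfrak{X}^1_t(\mathcal{A})$.

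For ${\bf g}_{\mathcal{A}/\mathcal{C}}$ I would first check well-definedness of the value $\mathrm{pr}({\bf g}(X,Y))$ on tangent representatives, that is, that it vanishes whenever $X\in\mathfrak{X}^1_0(\mathcal{A})=\ker(\mathrm{pr}|_{\mathfrak{X}^1_t(\mathcal{A})})$; by Axiom~1 such $X$ equals $\sum_i c_iX^i$ with $c_i\in\mathcal{C}$ and $X^i\in\mathfrak{X}^1_t(\mathcal{A})$, so ${\bf g}(X,Y)=\sum_i c_i{\bf g}(X^i,Y)\in\mathcal{C}$ since $\mathcal{C}$ is a two-sided ideal, and the case $Y\in\mathfrak{X}^1_0(\mathcal{A})$ is the same using braided left $\mathcal{A}$-linearity in the second slot and $H\rhd\mathcal{C}\subseteq\mathcal{C}$. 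Left $\mathcal{A}/\mathcal{C}$-linearity in the first argument, $H$-equivariance and braided symmetry then descend directly because $\mathrm{pr}$ on $\mathcal{A}$ and on $\mathfrak{X}^1_t(\mathcal{A})$ intertwines the $H$- and $\mathcal{A}$-actions and commutes with the braiding. For strong non-degeneracy: if ${\bf g}_{\mathcal{A}/\mathcal{C}}(\mathrm{pr}(X),\mathrm{pr}(X))=0$ for some $X\in\mathfrak{X}^1_t(\mathcal{A})$, then ${\bf g}(X,X)\in\mathcal{C}$, so Axiom~2 forces $X\in\mathfrak{X}^1_0(\mathcal{A})$, i.e. $\mathrm{pr}(X)=0$; in particular, as remarked in the text, ${\bf g}_{\mathcal{A}/\mathcal{C}}$ is non-degenerate, which is what is needed below.

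Next, $\nabla^{\mathcal{A}/\mathcal{C}}$. Well-definedness of $\mathrm{pr}_{\bf g}(\nabla^\mathcal{R}_XY)$ on tangent representatives: if $X\in\mathfrak{X}^1_0(\mathcal{A})$ write $X=\sum_i c_iX^i$; eq.~(\ref{eq11}) gives $\nabla^\mathcal{R}_XY=\sum_i c_i\nabla^\mathcal{R}_{X^i}Y$, whose $\mathfrak{X}^1_t(\mathcal{A})$-component lies in $\mathcal{C}\cdot\mathfrak{X}^1_t(\mathcal{A})\subseteq\ker(\mathrm{pr})$ as $\mathfrak{X}^1_t(\mathcal{A})$ is an $\mathcal{A}$-submodule; if $Y\in\mathfrak{X}^1_0(\mathcal{A})$ write $Y=\sum_i c_iY^i$ and expand each $\nabla^\mathcal{R}_X(c_iY^i)$ by eq.~(\ref{eq12}) as $(\mathscr{L}^\mathcal{R}_Xc_i)\cdot Y^i+(\mathcal{R}_1^{-1}\rhd c_i)\cdot\nabla^\mathcal{R}_{\mathcal{R}_2^{-1}\rhd X}Y^i$, where $\mathscr{L}^\mathcal{R}_Xc_i=X(c_i)\in\mathcal{C}$ because $X$ is tangent and $\mathcal{R}_1^{-1}\rhd c_i\in\mathcal{C}$, so the whole expression lies in $\mathcal{C}\cdot\mathfrak{X}^1_\mathcal{R}(\mathcal{A})$ and its $\mathfrak{X}^1_t(\mathcal{A})$-component is annihilated by $\mathrm{pr}$. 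The three defining properties of an equivariant covariant derivative then follow by applying $\mathrm{pr}_{\bf g}$ to the corresponding identities for $\nabla^\mathcal{R}$: $H$-equivariance and left $\mathcal{A}/\mathcal{C}$-linearity from $H$-equivariance and $\mathcal{A}$-linearity of $\mathrm{pr}_{\bf g}$, and the braided Leibniz rule~(\ref{eq12}) on $\mathcal{A}/\mathcal{C}$ by additionally invoking $\mathscr{L}^\mathcal{R}_{\mathrm{pr}(X)}\mathrm{pr}(a)=\mathrm{pr}(X(a))$ from Theorem~\ref{lemma03} and the definition $\mathrm{pr}(X)(\mathrm{pr}(a))=\mathrm{pr}(X(a))$.

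Finally, for the Levi-Civita statement I would invoke the uniqueness part of Lemma~\ref{lemma04} (applicable since ${\bf g}_{\mathcal{A}/\mathcal{C}}$ is non-degenerate): it suffices to show $\nabla^{\mathcal{A}/\mathcal{C}}$ is torsion-free and metric with respect to ${\bf g}_{\mathcal{A}/\mathcal{C}}$. Torsion-freeness is immediate: since $\mathrm{pr}$ is a braided Lie algebra homomorphism, $[\mathrm{pr}(X),\mathrm{pr}(Y)]_\mathcal{R}=\mathrm{pr}([X,Y]_\mathcal{R})=\mathrm{pr}_{\bf g}([X,Y]_\mathcal{R})$ as $[X,Y]_\mathcal{R}$ is tangent, so $\mathrm{Tor}^{\nabla^{\mathcal{A}/\mathcal{C}}}(\mathrm{pr}(X),\mathrm{pr}(Y))=\mathrm{pr}_{\bf g}(\mathrm{Tor}^{\nabla^\mathcal{R}}(X,Y))=0$. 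For metricity one applies $\mathrm{pr}$ to eq.~(\ref{eq15}) for $\nabla^\mathcal{R}$: the left-hand side becomes $\mathrm{pr}(X({\bf g}(Y,Z)))=\mathscr{L}^\mathcal{R}_{\mathrm{pr}(X)}{\bf g}_{\mathcal{A}/\mathcal{C}}(\mathrm{pr}(Y),\mathrm{pr}(Z))$, and on the right, by the second preliminary observation ${\bf g}$ kills the mixed tangent--normal pairs, so ${\bf g}(\nabla^\mathcal{R}_XY,Z)={\bf g}(\mathrm{pr}_t(\nabla^\mathcal{R}_XY),Z)$ and similarly in the other term, whence $\mathrm{pr}({\bf g}(\nabla^\mathcal{R}_XY,Z))={\bf g}_{\mathcal{A}/\mathcal{C}}(\mathrm{pr}_{\bf g}(\nabla^\mathcal{R}_XY),\mathrm{pr}(Z))={\bf g}_{\mathcal{A}/\mathcal{C}}(\nabla^{\mathcal{A}/\mathcal{C}}_{\mathrm{pr}(X)}\mathrm{pr}(Y),\mathrm{pr}(Z))$ and likewise $\mathrm{pr}({\bf g}(\mathcal{R}_1^{-1}\rhd Y,\nabla^\mathcal{R}_{\mathcal{R}_2^{-1}\rhd X}Z))={\bf g}_{\mathcal{A}/\mathcal{C}}(\mathcal{R}_1^{-1}\rhd\mathrm{pr}(Y),\nabla^{\mathcal{A}/\mathcal{C}}_{\mathcal{R}_2^{-1}\rhd\mathrm{pr}(X)}\mathrm{pr}(Z))$, which assembles into eq.~(\ref{eq15}) on $\mathcal{A}/\mathcal{C}$. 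Uniqueness then identifies $\nabla^{\mathcal{A}/\mathcal{C}}$ with the equivariant Levi-Civita covariant derivative of ${\bf g}_{\mathcal{A}/\mathcal{C}}$. The main obstacle is the bookkeeping around the orthogonal splitting---establishing that $\mathfrak{X}^1_n(\mathcal{A})$ is $H$- and $\mathcal{A}$-stable so that $\mathrm{pr}_{\bf g}$ is a morphism in ${}_\mathcal{A}^H\mathcal{M}_\mathcal{A}^\mathcal{R}$, that the mixed pairings vanish, and that Axioms~1 and~2 are used exactly where stated; the remaining steps are a routine transport of identities along $\mathrm{pr}$.
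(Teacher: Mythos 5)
Your proposal is correct and follows essentially the same route as the paper: well-definedness of ${\bf g}_{\mathcal{A}/\mathcal{C}}$ and of $\nabla^{\mathcal{A}/\mathcal{C}}$ via Axiom~1 (writing elements of $\mathfrak{X}^1_0(\mathcal{A})$ as $\sum_i c_i X^i$ and using eqs.~(\ref{eq11})--(\ref{eq12})), and strong non-degeneracy via Axiom~2, exactly as in the paper's proof. The remaining points you spell out (stability of $\mathfrak{X}^1_n(\mathcal{A})$ so that $\mathrm{pr}_{\bf g}$ is equivariant and $\mathcal{A}$-linear, descent of the metric and covariant-derivative axioms, and the Levi-Civita identification via the uniqueness of Lemma~\ref{lemma04} after projecting torsion and the metricity identity~(\ref{eq15}) using the vanishing of mixed tangent--normal pairings) are precisely the details the paper defers to \cite{ThomasPhDThesis}, and your arguments for them are sound.
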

\begin{proof}
Axiom~1 assures ${\bf g}_{\mathcal{A}/\mathcal{C}}$ to be
well-defined, since
\begin{align*}
    {\bf g}_{\mathcal{A}/\mathcal{C}}(\mathrm{pr}_{\bf g}(X),\mathrm{pr}_{\bf g}(Y))
    =&{\bf g}_{\mathcal{A}/\mathcal{C}}\bigg(
    \mathrm{pr}_{\bf g}\bigg(\sum_ic_i\cdot X^i\bigg),
    \mathrm{pr}_{\bf g}(Y)\bigg)
    =\mathrm{pr}\bigg({\bf g}\bigg(\sum_ic_i\cdot X^i,Y\bigg)\bigg)\\
    =&\mathrm{pr}\bigg(\underbrace{\sum_ic_i\cdot 
    {\bf g}(X^i,Y)}_{\in\mathcal{C}}\bigg)
    =0
\end{align*}
and similarly ${\bf g}_{\mathcal{A}/\mathcal{C}}
(\mathrm{pr}_{\bf g}(Y),\mathrm{pr}_{\bf g}(X))=0$
for all $X\in\mathfrak{X}^1_0(\mathcal{A})$ and
$Y\in\mathfrak{X}^1_\mathcal{R}(\mathcal{A})$.
Let $X\in\mathfrak{X}^1_\mathcal{R}(
\mathcal{A}/\mathcal{C})$ and choose $Y\in\mathfrak{X}^1_t(\mathcal{A})$ such 
that $\mathrm{pr}(Y)=X$. Then
\begin{align*}
    0
    ={\bf g}_{\mathcal{A}/\mathcal{C}}(X,X)
    =\mathrm{pr}({\bf g}(Y,Y))
\end{align*}
implies ${\bf g}(Y,Y)\in\mathcal{C}$, i.e. $Y\in\mathfrak{X}^1_0(\mathcal{A})$ by
Axiom~2. In other words ${\bf g}_{\mathcal{A}/\mathcal{C}}(X,X)=0$ implies
$X=0$, which is equivalent to the statement that $X\neq 0$ implies
${\bf g}_{\mathcal{A}/\mathcal{C}}(X,X)\neq 0$, i.e. strong non-degeneracy of
${\bf g}_{\mathcal{A}/\mathcal{C}}$. From Axiom~1 it follows that
$\nabla^{\mathcal{A}/\mathcal{C}}$ is well-defined. In fact, for
$X=\sum_ic_i\cdot X^i\in\mathfrak{X}^1_0(\mathcal{A})$ and
$Y\in\mathfrak{X}^1_t(\mathcal{A})$ we obtain
\begin{align*}
    \nabla^{\mathcal{A}/\mathcal{C}}_{\mathrm{pr}(X)}\mathrm{pr}(Y)
    =\mathrm{pr}_{\bf g}(\nabla^\mathcal{R}_XY)
    =\mathrm{pr}_{\bf g}\bigg(\underbrace{
    \sum_ic_i\cdot\nabla^\mathcal{R}_{X^i}Y}_{
    \in\mathfrak{X}^1_0(\mathcal{A})}\bigg)
    =0
\end{align*}
and
\begin{align*}
    \nabla^{\mathcal{A}/\mathcal{C}}_{\mathrm{pr}(Y)}\mathrm{pr}(X)
    =&\mathrm{pr}_{\bf g}\bigg(\sum_i\nabla^\mathcal{R}_Y(c_i\cdot X^i)\bigg)\\
    =&\sum_i\mathrm{pr}_{\bf g}(\underbrace{\overbrace{
    (\mathscr{L}^\mathcal{R}_Yc_i)}^{\in\mathcal{C}}\cdot X^i}_{
    \in\mathfrak{X}^1_0(\mathcal{A})}
    +\underbrace{\overbrace{(\mathcal{R}_1^{-1}\rhd c_i)}^{\in\mathcal{C}}\cdot
    \nabla^\mathcal{R}_{\mathcal{R}_2^{-1}\rhd X^i}Y}_{
    \in\mathfrak{X}^1_0(\mathcal{A})})
    =0,
\end{align*}
since $\nabla^\mathcal{R}$ is left $\mathcal{A}$-linear in the first
argument and satisfies a braided Leibniz rule in the second
argument. The remaining results are proven in \cite{ThomasPhDThesis}.
\end{proof}
We would like to stress that the assumptions of Proposition \ref{lemma05} are sufficient
to project strongly non-degenerate equivariant metrics and equivariant covariant derivatives
to submanifold algebras. Whether those conditions are also necessary is part
of further investigation.
Fix an equivariant covariant derivative
$\nabla^\mathcal{R}$ on $\mathcal{A}$ and
a strongly non-degenerate equivariant metric
${\bf g}$ such that axiom $1$ and $2$ hold.
The curvature and torsion of a projected equivariant covariant derivative
coincide with the projection of the curvature and torsion of $\nabla^\mathcal{R}$.
\begin{corollary}
The curvature $R^{\nabla^{\mathcal{A}/\mathcal{C}}}$ and the torsion
$\mathrm{Tor}^{\nabla^{\mathcal{A}/\mathcal{C}}}$ of the projected
equivariant covariant derivative
$\nabla^{\mathcal{A}/\mathcal{C}}$ are given by
\begin{equation}
    R^{\nabla^{\mathcal{A}/\mathcal{C}}}(\mathrm{pr}(X),\mathrm{pr}(Y))(\mathrm{pr}(Z))
    =\mathrm{pr}_{\bf g}(R^{\nabla^\mathcal{R}}(X,Y)Z)
\end{equation}
and
\begin{equation}
    \mathrm{Tor}^{\nabla^{\mathcal{A}/\mathcal{C}}}(\mathrm{pr}(X),\mathrm{pr}(Y))
    =\mathrm{pr}_{\bf g}(\mathrm{Tor}^{\nabla^\mathcal{R}}(X,Y))
\end{equation}
for all $X,Y,Z\in\mathfrak{X}^1_t(\mathcal{A})$.
\end{corollary}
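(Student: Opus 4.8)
The plan is to observe that both the torsion and the curvature of $\nabla^\mathcal{R}$ are assembled from exactly three operations --- the $H$-action, the braided commutator $[\cdot,\cdot]_\mathcal{R}$, and the covariant derivative itself --- each of which is intertwined by the projection. Recall that $\mathfrak{X}^1_t(\mathcal{A})$ is an $H$-submodule and a braided Lie subalgebra of $\mathfrak{X}^1_\mathcal{R}(\mathcal{A})$, that $\mathrm{pr}\colon\mathfrak{X}^1_t(\mathcal{A})\to\mathrm{Der}_\mathcal{R}(\mathcal{A}/\mathcal{C})$ is $H$-equivariant and a homomorphism of braided Lie algebras, that $\mathrm{pr}_{\bf g}$ agrees with $\mathrm{pr}$ on $\mathfrak{X}^1_t(\mathcal{A})$, and that $\mathrm{pr}_{\bf g}$ is $\Bbbk$-linear. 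Hence for tangent $X,Y,Z$ the elements $\mathcal{R}_1^{-1}\rhd X$, $\mathcal{R}_2^{-1}\rhd X$ and $[X,Y]_\mathcal{R}$ are again tangent, $\mathcal{R}_1^{-1}\rhd\mathrm{pr}(X)=\mathrm{pr}(\mathcal{R}_1^{-1}\rhd X)$, and $[\mathrm{pr}(X),\mathrm{pr}(Y)]_\mathcal{R}=\mathrm{pr}([X,Y]_\mathcal{R})$.

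First I would dispose of the torsion, which is immediate. Expanding $\mathrm{Tor}^{\nabla^{\mathcal{A}/\mathcal{C}}}(\mathrm{pr}(X),\mathrm{pr}(Y))$ by its definition, every occurrence of $\nabla^{\mathcal{A}/\mathcal{C}}_{\mathrm{pr}(\,\cdot\,)}\mathrm{pr}(\,\cdot\,)$ is replaced by $\mathrm{pr}_{\bf g}(\nabla^\mathcal{R}_{\,\cdot\,}\,\cdot\,)$ via Proposition~\ref{lemma05}, and $[\mathrm{pr}(X),\mathrm{pr}(Y)]_\mathcal{R}$ by $\mathrm{pr}_{\bf g}([X,Y]_\mathcal{R})$; since every argument occurring here is tangent, nothing obstructs these substitutions. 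Pulling $\mathrm{pr}_{\bf g}$ out by $\Bbbk$-linearity collapses the three terms to $\mathrm{pr}_{\bf g}\big(\nabla^\mathcal{R}_XY-\nabla^\mathcal{R}_{\mathcal{R}_1^{-1}\rhd Y}(\mathcal{R}_2^{-1}\rhd X)-[X,Y]_\mathcal{R}\big)=\mathrm{pr}_{\bf g}(\mathrm{Tor}^{\nabla^\mathcal{R}}(X,Y))$.

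The curvature needs one extra ingredient. In the two iterated terms one must compute $\nabla^{\mathcal{A}/\mathcal{C}}_{\mathrm{pr}(X)}\big(\nabla^{\mathcal{A}/\mathcal{C}}_{\mathrm{pr}(Y)}\mathrm{pr}(Z)\big)=\nabla^{\mathcal{A}/\mathcal{C}}_{\mathrm{pr}(X)}\,\mathrm{pr}_{\bf g}(\nabla^\mathcal{R}_YZ)$, and $\nabla^\mathcal{R}_YZ$ need not be tangent, so Proposition~\ref{lemma05} does not apply verbatim. I would therefore first upgrade it to the identity $\nabla^{\mathcal{A}/\mathcal{C}}_{\mathrm{pr}(X)}\,\mathrm{pr}_{\bf g}(W)=\mathrm{pr}_{\bf g}(\nabla^\mathcal{R}_XW)$ for \emph{all} $W\in\mathfrak{X}^1_\mathcal{R}(\mathcal{A})$ and tangent $X$, which by the decomposition $\mathfrak{X}^1_\mathcal{R}(\mathcal{A})=\mathfrak{X}^1_t(\mathcal{A})\oplus\mathfrak{X}^1_n(\mathcal{A})$ reduces to showing $\mathrm{pr}_{\bf g}(\nabla^\mathcal{R}_XN)=0$ for every braided normal field $N$. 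For this I would pair $\nabla^\mathcal{R}_XN$ against an arbitrary tangent field through ${\bf g}$ and use the metric-type braided Leibniz rule (equations (\ref{eq14})--(\ref{eq15})) together with the ${\bf g}$-orthogonality of $N$ to $\mathfrak{X}^1_t(\mathcal{A})$, the $H$-stability of $\mathfrak{X}^1_n(\mathcal{A})$, and Axioms~1 and 2 (which make ${\bf g}_{\mathcal{A}/\mathcal{C}}$ descend and be strongly non-degenerate) to pin down the tangent component of $\nabla^\mathcal{R}_XN$ and check it lies in $\mathfrak{X}^1_0(\mathcal{A})=\ker\mathrm{pr}$, mirroring the well-definedness argument of Proposition~\ref{lemma05}. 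Granting this upgraded identity, the two iterated terms become $\mathrm{pr}_{\bf g}(\nabla^\mathcal{R}_X\nabla^\mathcal{R}_YZ)$ and $\mathrm{pr}_{\bf g}(\nabla^\mathcal{R}_{\mathcal{R}_1^{-1}\rhd Y}\nabla^\mathcal{R}_{\mathcal{R}_2^{-1}\rhd X}Z)$, the third term becomes $\mathrm{pr}_{\bf g}(\nabla^\mathcal{R}_{[X,Y]_\mathcal{R}}Z)$ exactly as in the torsion case, and $\Bbbk$-linearity of $\mathrm{pr}_{\bf g}$ assembles all three into $\mathrm{pr}_{\bf g}(R^{\nabla^\mathcal{R}}(X,Y)Z)$.

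The step I expect to be the main obstacle is precisely this upgraded intertwining $\mathrm{pr}_{\bf g}\circ\nabla^\mathcal{R}_X=\nabla^{\mathcal{A}/\mathcal{C}}_{\mathrm{pr}(X)}\circ\mathrm{pr}_{\bf g}$ with unrestricted second argument, equivalently the vanishing $\mathrm{pr}_{\bf g}(\nabla^\mathcal{R}_XN)=0$ for braided normal $N$: it is the only place where the hypotheses on ${\bf g}$ are used in an essential way, and the only point demanding more than the formal fact that $\mathrm{pr}$ intertwines the $H$-action and the braided Lie bracket. Everything else --- the torsion identity in full, and the reduction of the curvature identity to this single lemma --- is routine bookkeeping with the $\mathcal{R}$-matrix legs.
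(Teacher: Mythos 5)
Your torsion argument is fine and is surely the intended one (the paper itself prints no proof, deferring to \cite{ThomasPhDThesis}): every field that occurs there --- the $\mathcal{R}$-legs acting on $X$ and $Y$, and $[X,Y]_\mathcal{R}$ --- stays in $\mathfrak{X}^1_t(\mathcal{A})$, so the defining relation $\nabla^{\mathcal{A}/\mathcal{C}}_{\mathrm{pr}(X)}\mathrm{pr}(Y)=\mathrm{pr}_{\bf g}(\nabla^\mathcal{R}_XY)$, the $H$-equivariance of $\mathrm{pr}$ and the braided Lie algebra homomorphism property close the computation.

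The curvature half has a genuine gap, exactly at the step you flag. The lemma you reduce to, $\nabla^{\mathcal{A}/\mathcal{C}}_{\mathrm{pr}(X)}\mathrm{pr}_{\bf g}(W)=\mathrm{pr}_{\bf g}(\nabla^\mathcal{R}_XW)$ for all $W$, i.e.\ $\mathrm{pr}_{\bf g}(\nabla^\mathcal{R}_XN)=0$ for every $N\in\mathfrak{X}^1_n(\mathcal{A})$, cannot be obtained from the ingredients you cite. First, eq.(\ref{eq15}) is not available: the corollary fixes an \emph{arbitrary} equivariant covariant derivative together with a strongly non-degenerate equivariant metric satisfying Axioms~1 and~2; $\nabla^\mathcal{R}$ is not assumed metric, so pairing $\nabla^\mathcal{R}_XN$ against tangent fields through ${\bf g}$ gives no information at all. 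Second, even in the metric (Levi-Civita) case the pairing you propose yields ${\bf g}(\nabla^\mathcal{R}_XN,Y)=-{\bf g}(\mathcal{R}_1^{-1}\rhd N,\nabla^\mathcal{R}_{\mathcal{R}_2^{-1}\rhd X}Y)$, a braided second-fundamental-form term; orthogonality of $N$ to $\mathfrak{X}^1_t(\mathcal{A})$ controls ${\bf g}(N,Y)$, not the tangential part of $\nabla^\mathcal{R}_XN$, and there is no reason for this pairing to lie in $\mathcal{C}$. In the classical limit your lemma is precisely the vanishing of the Weingarten map --- the very correction that makes the Gauss equation nontrivial --- so it cannot be had from non-degeneracy, equivariance and Axioms~1--2 alone; the reduction trades the corollary for a statement that is at least as strong. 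Any complete proof has to handle the normal component of the inner derivative $\nabla^\mathcal{R}_YZ$ differently: either by exploiting the very rigid, module-theoretic way $\mathfrak{X}^1_n(\mathcal{A})$ is defined here (orthogonality to $\mathfrak{X}^1_t(\mathcal{A})$ holds identically in $\mathcal{A}$, not merely modulo $\mathcal{C}$, and since $\mathcal{C}\cdot\mathfrak{X}^1_\mathcal{R}(\mathcal{A})\subseteq\mathfrak{X}^1_t(\mathcal{A})$ one already has ${\bf g}(N,c\cdot Z)=0$ for all $c\in\mathcal{C}$ and arbitrary $Z$), or by showing that the braided antisymmetrized combination of the two offending terms vanishes as a whole; your sketch does neither, so as written the curvature identity remains unproved.
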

One extends the projection $\mathrm{pr}_{\bf g}\colon
\mathfrak{X}^\bullet_\mathcal{R}(\mathcal{A})
\rightarrow\mathfrak{X}^\bullet_\mathcal{R}(\mathcal{A}/\mathcal{R})$
to braided multivector fields by defining it to coincide with $\mathrm{pr}$
on $\mathcal{A}$ and to be a homomorphism of the braided wedge product on
higher wedge powers. On braided differential forms we set
$\mathrm{pr}_{\bf g}=\mathrm{pr}$.
\begin{corollary}
The equivariant covariant derivatives
$$
\nabla^{\mathcal{A}/\mathcal{C}}\colon
\mathfrak{X}_\mathcal{R}^1(\mathcal{A}/\mathcal{C})
\otimes\mathfrak{X}_\mathcal{R}^\bullet(\mathcal{A}/\mathcal{C})
\rightarrow\mathfrak{X}_\mathcal{R}^\bullet(\mathcal{A}/\mathcal{C})
\text{ and }
\tilde{\nabla}^{\mathcal{A}/\mathcal{C}}
\colon\mathfrak{X}_\mathcal{R}^1(\mathcal{A}/\mathcal{C})
\otimes\Omega_\mathcal{R}^\bullet(\mathcal{A}/\mathcal{C})
\rightarrow\Omega_\mathcal{R}^\bullet(\mathcal{A}/\mathcal{C}),
$$
induced by the projected equivariant covariant derivative $\nabla^{\mathcal{A}/\mathcal{C}}$
on $\mathcal{A}/\mathcal{C}$ according to Proposition~\ref{prop02},
are projected from the covariant derivatives induced by $\nabla^\mathcal{R}$. Namely,
\begin{equation}
    \nabla^{\mathcal{A}/\mathcal{C}}_{\mathrm{pr}(X)}\mathrm{pr}(Y)
    =\mathrm{pr}_{\bf g}(\nabla^\mathcal{R}_XY)
    \text{ and }
    \tilde{\nabla}^{\mathcal{A}/\mathcal{C}}_{\mathrm{pr}(X)}\mathrm{pr}(\omega)
    =\mathrm{pr}_{\bf g}(\tilde{\nabla}^\mathcal{R}_X\omega)
\end{equation}
for all $X\in\mathfrak{X}^1_t(\mathcal{A})$, $Y\in\mathfrak{X}^\bullet_t(\mathcal{A})$ and
$\omega\in\Omega_\mathcal{R}^\bullet(\mathcal{A})$.
\end{corollary}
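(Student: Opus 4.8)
The plan is to reduce everything to the degree-one statement of Proposition~\ref{lemma05} together with an induction on the form and multivector degree, exploiting three facts: that $\mathrm{pr}$ and $\mathrm{pr}_{\bf g}$ are homomorphisms of $\wedge_\mathcal{R}$, that they are $H$-equivariant, and that they coincide on $\mathfrak{X}^\bullet_t(\mathcal{A})$; together with the fact that, by Proposition~\ref{prop02}, both $\nabla^\mathcal{R}$ and $\tilde{\nabla}^\mathcal{R}$ (and likewise $\nabla^{\mathcal{A}/\mathcal{C}}$, $\tilde{\nabla}^{\mathcal{A}/\mathcal{C}}$) act as graded braided derivations of $\wedge_\mathcal{R}$.

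First I would record the base cases. For $\nabla^{\mathcal{A}/\mathcal{C}}$ on $\mathfrak{X}^1_\mathcal{R}(\mathcal{A}/\mathcal{C})$ the identity $\nabla^{\mathcal{A}/\mathcal{C}}_{\mathrm{pr}(X)}\mathrm{pr}(Y)=\mathrm{pr}_{\bf g}(\nabla^\mathcal{R}_XY)$ is exactly Proposition~\ref{lemma05}. For $\tilde{\nabla}^{\mathcal{A}/\mathcal{C}}$ on $\Omega^0_\mathcal{R}(\mathcal{A}/\mathcal{C})=\mathcal{A}/\mathcal{C}$ the covariant derivative is the braided Lie derivative, so the statement is the first identity of Theorem~\ref{lemma03}. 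The essential base case is $\Omega^1$: starting from the defining formula of $\tilde{\nabla}^\mathcal{R}$ in Proposition~\ref{prop02}, I would apply $\mathrm{pr}$ to $\langle\tilde{\nabla}^\mathcal{R}_X\omega,Y\rangle_\mathcal{R}$ for $X,Y\in\mathfrak{X}^1_t(\mathcal{A})$; using that $\mathrm{pr}$ intertwines the braided dual pairing (immediate from $\mathrm{pr}(X(a))=\mathrm{pr}(X)(\mathrm{pr}(a))$), that it intertwines $\mathscr{L}^\mathcal{R}$ by Theorem~\ref{lemma03}, that it intertwines $\nabla^\mathcal{R}$ by Proposition~\ref{lemma05}, and that it is $H$-equivariant, one obtains the equality of $\langle\mathrm{pr}(\tilde{\nabla}^\mathcal{R}_X\omega),\mathrm{pr}(Y)\rangle_\mathcal{R}$ and $\langle\tilde{\nabla}^{\mathcal{A}/\mathcal{C}}_{\mathrm{pr}(X)}\mathrm{pr}(\omega),\mathrm{pr}(Y)\rangle_\mathcal{R}$ for all $Y\in\mathfrak{X}^1_t(\mathcal{A})$. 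Since $\mathrm{pr}$ is surjective onto $\mathrm{Der}_\mathcal{R}(\mathcal{A}/\mathcal{C})$ and an element of $\Omega^1_\mathcal{R}(\mathcal{A}/\mathcal{C})$ is determined by its values on braided derivations, the two one-forms agree.

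Then I would run the induction. Writing a general element of $\mathfrak{X}^\ell_t(\mathcal{A})$ (resp.\ $\Omega^\ell_\mathcal{R}(\mathcal{A})$) as a sum of $\wedge_\mathcal{R}$-products of degree-one elements, I would apply the graded braided Leibniz rule for $\nabla^\mathcal{R}$ (resp.\ $\tilde{\nabla}^\mathcal{R}$) from Proposition~\ref{prop02}, then apply $\mathrm{pr}_{\bf g}$ (resp.\ $\mathrm{pr}$). Using that $\mathrm{pr}_{\bf g}$ is a homomorphism of $\wedge_\mathcal{R}$ one distributes it over the two summands; using $H$-equivariance one moves it past the $\mathcal{R}^{-1}$-actions; and using the inductive hypothesis together with the degree-one case (and $\mathrm{pr}_{\bf g}|_{\mathfrak{X}^1_t(\mathcal{A})}=\mathrm{pr}$), each summand matches the corresponding summand in the braided Leibniz rule for $\nabla^{\mathcal{A}/\mathcal{C}}$ (resp.\ $\tilde{\nabla}^{\mathcal{A}/\mathcal{C}}$). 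Comparing with the braided-derivation extension of $\nabla^{\mathcal{A}/\mathcal{C}}$ and $\tilde{\nabla}^{\mathcal{A}/\mathcal{C}}$ supplied by Proposition~\ref{prop02}, and again using surjectivity of $\mathrm{pr}$, one reads off the claimed identities.

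The main obstacle is the bookkeeping around $\mathrm{pr}_{\bf g}$: unlike $\mathrm{pr}$ it is built from the orthogonal splitting $\mathfrak{X}^1_\mathcal{R}(\mathcal{A})=\mathfrak{X}^1_t(\mathcal{A})\oplus\mathfrak{X}^1_n(\mathcal{A})$, so one must check (i) that this splitting is $H$-stable -- which holds because ${\bf g}$ is equivariant, hence $\mathfrak{X}^1_n(\mathcal{A})$ is an $H$-submodule and the projection onto $\mathfrak{X}^1_t(\mathcal{A})$ is $H$-equivariant; (ii) that the extension of $\mathrm{pr}_{\bf g}$ to higher degrees as a $\wedge_\mathcal{R}$-homomorphism genuinely restricts to $\mathrm{pr}$ on $\mathfrak{X}^\bullet_t(\mathcal{A})$, so that the ``tangent'' summand in each application of the braided Leibniz rule collapses correctly; and (iii) that $\nabla^\mathcal{R}_XY$ need not be tangent even for $X,Y$ tangent, so that the discarded normal component is precisely what renders $\nabla^{\mathcal{A}/\mathcal{C}}$ well-defined -- this is where Axioms~1 and~2 enter, exactly as in the proof of Proposition~\ref{lemma05}. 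The curvature and torsion compatibility of the preceding corollary is used only through the degree-one case. Once these points are settled the remaining computation is routine.
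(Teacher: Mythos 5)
The paper states this corollary without proof (it is deferred to \cite{ThomasPhDThesis}), so there is nothing to compare your argument against; judging it on its own terms, the overall plan is reasonable and the multivector-field half does go through. There, after applying the graded braided Leibniz rule of $\nabla^\mathcal{R}$ to $Y_1\wedge_\mathcal{R}\cdots\wedge_\mathcal{R}Y_\ell$, every normal component of $\nabla^\mathcal{R}_{X'}Y_i$ sits as a \emph{wedge factor}, and since $\mathrm{pr}_{\bf g}$ is by definition a homomorphism of $\wedge_\mathcal{R}$ that annihilates $\mathfrak{X}^1_n(\mathcal{A})$, these terms are killed and the induction closes (modulo the routine checks you list: $H$-stability of the splitting, well-definedness of the extension of $\mathrm{pr}_{\bf g}$ on the braided exterior algebra, and $\mathrm{pr}_{\bf g}|_{\mathfrak{X}^\bullet_t}=\mathrm{pr}$).

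The differential-form half has a genuine gap at your $\Omega^1$ base case, and it propagates to all higher degrees. Applying $\mathrm{pr}$ to the defining formula of Proposition~\ref{prop02} and comparing with the formula defining $\tilde{\nabla}^{\mathcal{A}/\mathcal{C}}$ downstairs, the first terms match, but the second terms differ by
$\mathrm{pr}\bigl(\langle\mathcal{R}_1^{-1}\rhd\omega,\,P_n(\nabla^\mathcal{R}_{\mathcal{R}_2^{-1}\rhd X}Y)\rangle_\mathcal{R}\bigr)$,
where $P_n$ denotes the projection onto $\mathfrak{X}^1_n(\mathcal{A})$: upstairs $\omega$ is paired against the \emph{full} $\nabla^\mathcal{R}_{X'}Y$, downstairs only against its tangential part. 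Unlike in the multivector case, the normal vector field here sits inside the dual pairing rather than inside a wedge factor, and since $\mathrm{pr}_{\bf g}=\mathrm{pr}$ on forms there is no mechanism that annihilates it; $\mathrm{pr}(\omega)$ only controls $\mathrm{pr}(\omega(Z))$ for \emph{tangent} $Z$. This discrepancy is precisely a second-fundamental-form term, the braided analogue of $\iota^*(\omega(II(X,Y)))$, and none of the ingredients you cite forces it to vanish: Axioms~1 and~2 and the argument of Proposition~\ref{lemma05} only guarantee that $\nabla^{\mathcal{A}/\mathcal{C}}$ is independent of the chosen representatives, which is a different issue from the one you flag in your point (iii). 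As written, your proof of the identity $\tilde{\nabla}^{\mathcal{A}/\mathcal{C}}_{\mathrm{pr}(X)}\mathrm{pr}(\omega)=\mathrm{pr}_{\bf g}(\tilde{\nabla}^\mathcal{R}_X\omega)$ therefore does not close; you would need either an additional argument showing that $\mathrm{pr}(\langle\omega,N\rangle_\mathcal{R})$ vanishes for the relevant normal fields $N$, or an extra hypothesis on $\omega$ (or on the vanishing of the braided second fundamental form) that the statement does not provide.
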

Furthermore, twisted equivariant covariant derivatives behave well under projection.
\begin{proposition}
For any twist $\mathcal{F}$ on $H$, the projection
of the twisted equivariant covariant derivative coincides with the twist deformation
of the projected equivariant covariant derivative, i.e.
$(\nabla^{\mathcal{A}/\mathcal{C}})^\mathcal{F}_{\mathrm{pr}(X)}\mathrm{pr}(Y)
=\mathrm{pr}_{\bf g}(\nabla^\mathcal{F}_XY)$
for all $X,Y\in\mathfrak{X}_t^1(\mathcal{A})_\mathcal{F}$. 
Its curvature and torsion are given by
\begin{equation}
\begin{split}
    R^{(\nabla^{\mathcal{A}/\mathcal{C}})^\mathcal{F}}&
    (\mathrm{pr}(X),\mathrm{pr}(Y))(\mathrm{pr}(Z))\\
    =&R^{\nabla^{\mathcal{A}/\mathcal{C}}}\bigg(
    (\mathcal{F}_{1(1)}^{-1}\mathcal{F}_1^{'-1})
    \rhd\mathrm{pr}(X),
    (\mathcal{F}_{1(2)}^{-1}\mathcal{F}_2^{'-1})\rhd\mathrm{pr}(Y)
    \bigg)(\mathcal{F}_2^{-1}\rhd\mathrm{pr}(Z))\\
    =&\mathrm{pr}\bigg(R^{\nabla^\mathcal{R}}\bigg(
    (\mathcal{F}_{1(1)}^{-1}\mathcal{F}_1^{'-1})
    \rhd X,
    (\mathcal{F}_{1(2)}^{-1}\mathcal{F}_2^{'-1})\rhd Y
    \bigg)(\mathcal{F}_2^{-1}\rhd Z)
    \bigg)
\end{split}
\end{equation}
and
\begin{equation}
\begin{split}
    \mathrm{Tor}^{(\nabla^{{\mathcal{A}/\mathcal{C}}})^\mathcal{F}}
    (\mathrm{pr}(X),\mathrm{pr}(Y))
    =&\mathrm{Tor}^{\nabla^{\mathcal{A}/\mathcal{C}}}
    (\mathcal{F}_1^{-1}\rhd\mathrm{pr}(X),
    \mathcal{F}_2^{-1}\rhd\mathrm{pr}(Y))\\
    =&\mathrm{pr}\bigg(\mathrm{Tor}^{\nabla^\mathcal{R}}(\mathcal{F}_1^{-1}\rhd X,
    \mathcal{F}_2^{-1}\rhd Y)\bigg)
\end{split}
\end{equation}
for all $X,Y,Z\in\mathfrak{X}^1_t(\mathcal{A})_\mathcal{F}$, respectively.
Similar statements hold for the
induced (twisted) equivariant covariant derivatives on braided differential forms and
braided multivector fields.
\end{proposition}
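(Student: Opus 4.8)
The plan is to reduce each assertion to its already-proven untwisted version — Proposition~\ref{lemma05}, Theorem~\ref{lemma03}, and the corollary expressing the curvature and torsion of $\nabla^{\mathcal{A}/\mathcal{C}}$ as projections of those of $\nabla^\mathcal{R}$ — by means of the substitution $(-)\otimes(-)\mapsto(\mathcal{F}_1^{-1}\rhd(-))\otimes(\mathcal{F}_2^{-1}\rhd(-))$ underlying the Drinfel'd functor, tracking tangency at every step. First I would record that $\mathrm{pr}_{\bf g}\colon\mathfrak{X}^1_\mathcal{R}(\mathcal{A})\to\mathfrak{X}^1_\mathcal{R}(\mathcal{A}/\mathcal{C})$ is $H$-equivariant: since ${\bf g}$ is equivariant, the ${\bf g}$-orthogonal complement $\mathfrak{X}^1_n(\mathcal{A})$ of the $H$-submodule $\mathfrak{X}^1_t(\mathcal{A})$ is again $H$-stable, so the projection onto the first summand of $\mathfrak{X}^1_\mathcal{R}(\mathcal{A})=\mathfrak{X}^1_t(\mathcal{A})\oplus\mathfrak{X}^1_n(\mathcal{A})$ commutes with $\rhd$, while $\mathrm{pr}$ does so by construction; moreover $\mathrm{pr}_{\bf g}$ restricts to $\mathrm{pr}$ on $\mathfrak{X}^1_t(\mathcal{A})$. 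Since $\mathfrak{X}^1_t(\mathcal{A})$ is $H$-stable, $\mathcal{F}_i^{-1}\rhd X$ remains tangent whenever $X$ is, hence $\mathcal{F}_i^{-1}\rhd\mathrm{pr}(X)=\mathrm{pr}(\mathcal{F}_i^{-1}\rhd X)$ for $X\in\mathfrak{X}^1_t(\mathcal{A})$.

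With this the covariant derivative identity would follow by unfolding definitions: for $X,Y\in\mathfrak{X}^1_t(\mathcal{A})_\mathcal{F}$,
\begin{align*}
    (\nabla^{\mathcal{A}/\mathcal{C}})^\mathcal{F}_{\mathrm{pr}(X)}\mathrm{pr}(Y)
    =&\nabla^{\mathcal{A}/\mathcal{C}}_{\mathrm{pr}(\mathcal{F}_1^{-1}\rhd X)}\mathrm{pr}(\mathcal{F}_2^{-1}\rhd Y)\\
    =&\mathrm{pr}_{\bf g}\bigl(\nabla^\mathcal{R}_{\mathcal{F}_1^{-1}\rhd X}(\mathcal{F}_2^{-1}\rhd Y)\bigr)
    =\mathrm{pr}_{\bf g}(\nabla^\mathcal{F}_XY),
\end{align*}
using the definition of the twisted equivariant covariant derivative, then Proposition~\ref{lemma05}, then the definition of $\nabla^\mathcal{F}$. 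The equality $\Omega^\bullet_\mathcal{R}(\mathcal{A}/\mathcal{C})_\mathcal{F}=\mathrm{pr}(\Omega^\bullet_\mathcal{R}(\mathcal{A})_\mathcal{F})$ and the compatibility of $\mathrm{pr}$ with $\mathscr{L}^\mathcal{F}$, $\mathrm{i}^\mathcal{F}$ and $\mathrm{d}$ were already obtained in the preceding proposition; the analogous statements for the induced (twisted) equivariant covariant derivatives on $\mathfrak{X}^\bullet_\mathcal{R}(\mathcal{A}/\mathcal{C})$ and $\Omega^\bullet_\mathcal{R}(\mathcal{A}/\mathcal{C})$ I would obtain in the same way, since all these extensions are built inductively from the braided Leibniz rule (Proposition~\ref{prop02}) and both $\mathrm{pr}_{\bf g}$ and the twist insert only $H$-actions, which every map in sight intertwines.

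For the torsion I would expand $\mathrm{Tor}^{(\nabla^{\mathcal{A}/\mathcal{C}})^\mathcal{F}}$ by its definition with respect to $\mathcal{R}_\mathcal{F}=\mathcal{F}_{21}\mathcal{R}\mathcal{F}^{-1}$, substitute $\nabla^\mathcal{F}_\bullet(-)=\nabla^\mathcal{R}_{\mathcal{F}_1^{-1}\rhd\bullet}(\mathcal{F}_2^{-1}\rhd-)$ and $[\cdot,\cdot]_\mathcal{F}=[\mathcal{F}_1^{-1}\rhd\cdot,\mathcal{F}_2^{-1}\rhd\cdot]_\mathcal{R}$ from Proposition~\ref{proposition01}, and collapse the result with the $2$-cocycle condition and its inverse — the same manipulation already used to show that $\nabla^\mathcal{F}$ inherits torsion-freeness — reaching $\mathrm{Tor}^{(\nabla^{\mathcal{A}/\mathcal{C}})^\mathcal{F}}(\mathrm{pr}(X),\mathrm{pr}(Y))=\mathrm{Tor}^{\nabla^{\mathcal{A}/\mathcal{C}}}(\mathcal{F}_1^{-1}\rhd\mathrm{pr}(X),\mathcal{F}_2^{-1}\rhd\mathrm{pr}(Y))$; the covariant derivative identity above together with the corollary writing $\mathrm{Tor}^{\nabla^{\mathcal{A}/\mathcal{C}}}$ as $\mathrm{pr}_{\bf g}\circ\mathrm{Tor}^{\nabla^\mathcal{R}}$ then gives the two displayed expressions. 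The curvature is treated identically, the only new feature being that $R^{\nabla^\mathcal{R}}$ has three slots, so two copies of $\mathcal{F}^{-1}$ must be carried through the iterated natural transformation $\varphi$; rewriting the twisted coproduct legs of the outer copy as ordinary coproduct legs — once more by the $2$-cocycle identity — is precisely what yields the factors $\mathcal{F}_{1(1)}^{-1}\mathcal{F}_1^{'-1}$, $\mathcal{F}_{1(2)}^{-1}\mathcal{F}_2^{'-1}$ and $\mathcal{F}_2^{-1}$ appearing in the statement, after which Proposition~\ref{lemma05} and the corollary push the result down to $R^{\nabla^\mathcal{R}}$.

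Conceptually nothing new happens: $R^{\nabla^\mathcal{R}}$ and $\mathrm{Tor}^{\nabla^\mathcal{R}}$ are morphisms in ${}_\mathcal{A}^H\mathcal{M}_\mathcal{A}^\mathcal{R}$, hence intertwined with their $\mathcal{R}_\mathcal{F}$-versions by $\mathrm{Drin}_\mathcal{F}$ together with $\varphi$, and $\mathrm{pr}_{\bf g}$ is itself such a morphism onto the quotient data; the statements for the induced maps on braided differential forms and braided multivector fields follow by the same reasoning via Proposition~\ref{prop02}. I expect the main obstacle to be purely the Hopf-algebraic bookkeeping — keeping $\mathcal{R}_\mathcal{F}$ under control and applying the cocycle identities in the correct order for the three-argument curvature — which is the kind of computation for which a complete treatment refers to \cite{ThomasPhDThesis}.
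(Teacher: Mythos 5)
Your proposal is correct and follows essentially the same route as the paper: the paper's proof is exactly your chain of four equalities (definition of $\nabla^\mathcal{F}$, Proposition~\ref{lemma05}, $H$-equivariance of $\mathrm{pr}$ on tangent fields, definition of $(\nabla^{\mathcal{A}/\mathcal{C}})^\mathcal{F}$), just read in the opposite direction, and it dispatches the curvature, torsion and induced-derivative statements with a "similarly". Your additional remarks — $H$-stability of $\mathfrak{X}^1_n(\mathcal{A})$ giving equivariance of $\mathrm{pr}_{\bf g}$, and the $2$-cocycle bookkeeping producing the legs $\mathcal{F}_{1(1)}^{-1}\mathcal{F}_1^{'-1}$, $\mathcal{F}_{1(2)}^{-1}\mathcal{F}_2^{'-1}$, $\mathcal{F}_2^{-1}$ in the curvature formula — correctly fill in exactly what the paper leaves implicit.
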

\begin{proof}
For all $X,Y\in\mathfrak{X}^1_t(\mathcal{A})_\mathcal{F}$ one obtains
\begin{align*}
    \mathrm{pr}_{\bf g}(\nabla^\mathcal{F}_XY)
    =&\mathrm{pr}_{\bf g}(\nabla^\mathcal{R}_{\mathcal{F}_1^{-1}\rhd X}
    (\mathcal{F}_2^{-1}\rhd Y))
    =\nabla^{\mathcal{A}/\mathcal{C}}_{\mathrm{pr}(\mathcal{F}_1^{-1}\rhd X)}
    (\mathrm{pr}(\mathcal{F}_2^{-1}\rhd Y))\\
    =&\nabla^{\mathcal{A}/\mathcal{C}}_{\mathcal{F}_1^{-1}\rhd\mathrm{pr}(X)}
    (\mathcal{F}_2^{-1}\rhd\mathrm{pr}(Y))
    =(\nabla^{\mathcal{A}/\mathcal{C}})^\mathcal{F}_{\mathrm{pr}(X)}\mathrm{pr}(Y)
\end{align*}
and similarly one proves the statements about the induced equivariant covariant derivatives.
\end{proof}

\section*{Acknowledgments}

The author is grateful to Francesco D'Andrea and Gaetano Fiore for their 
constant support. In particular, he wants to thank the latter for
introducing him to the concept of twisted Cartan calculus and proposing to prove
compatibility with projections to submanifold algebras. Special thanks go to
Paolo Aschieri and the referee for their valuable comments on the first version
of this paper. Furthermore, the author
wants to thank Stefan Waldmann for posing a question at the DQ seminar about the
existence of a braided Cartan calculus for every triangular structure.

\medskip

\end{document}